\author{Matteo Tamiozzo}
\title{On the Bloch-Kato conjecture for Hilbert modular forms}
\date{\today}
\newtheorem{teo}[subsection]{Theorem}     
\theoremstyle{plain}                    
\newtheorem{prop}[subsection]{Proposition}    
\newtheorem{corol}[subsection]{Corollary}     
\newtheorem{lem}[subsection]{Lemma}         
\theoremstyle{definition}               
\newtheorem{defin}[subsection]{Definition}
\theoremstyle{remark}                   
\newtheorem{rem}[subsection]{Remark}      
\newtheorem{notat}[subsection]{Notation}
\newtheorem{ass}[subsection]{Assumption}
\newcommand{\R}{\mathbf{R}}     
\newcommand{\C}{\mathbf{C}}     
\newcommand{\Q}{\mathbf{Q}}      
\newcommand{\Z}{\mathbf{Z}}    
\newcommand{\af}{\mathbf{A}_f}     
\newcommand{\pl}{\mathfrak{l}}
\newcommand{\p}{\mathfrak{p}}
\newcommand{\q}{\mathfrak{q}}
\newcommand{\n}{\mathfrak{n}}
\newcommand{\np}{\mathfrak{n}^+}
\newcommand{\nm}{\mathfrak{n}^-}
\newcommand{\OF}{\mathcal{O}_F}
\newcommand{\Tn}{\mathbf{T}_\mathfrak{n}}
\newcommand{\Op}{\mathcal{O}_\p}
\newcommand{\lOp}{l_{\Op}}
\newcommand{\Addr}{{\bigskip
\footnotesize
\textsc{Department of Mathematics, Imperial College London, London SW7 2AZ, UK}

\textit{Email address:} \texttt{m.tamiozzo@imperial.ac.uk}
}}
\numberwithin{equation}{subsection}
\begin{document}

\date{}

\begin{abstract}
The aim of this paper is to prove inequalities towards instances of the Bloch-Kato conjecture for Hilbert modular forms of parallel weight two, when the order of vanishing of the $L$-function at the central point is zero or one. We achieve this implementing an inductive Euler system argument which relies on explicit reciprocity laws for cohomology classes constructed using congruences of automorphic forms and special points on several Shimura curves.
\end{abstract}

\maketitle

\tableofcontents

\section{Introduction}

\subsection{} Let $F$ be a totally real field with ring of integers $\mathcal{O}_F$, $\n\subset \mathcal{O}_F$ an ideal and $f \in S(\n)$ a Hilbert newform of parallel weight two and level $U_1(\n)$ with trivial central character. Let $E$ be the number field generated by the Hecke eigenvalues of $f$. One can attach to $f$ a compatible system of (self-dual) Galois representations indexed by finite places of $E$, coming in most cases from a motive $M$ over $F$ with coefficients in $E$  (\hspace{1sp}\cite{blro93}) whose $L$-function coincides with the automorphic $L$-function $L(f, s)$. The conjectures of Bloch and Kato \cite{bk90} - reformulated and extended by Fontaine and Perrin-Riou \cite{fon92}, \cite{fpr94}- predict that the order of vanishing of $L(f, s)$ at the central point $s=1$ should be equal to the dimension of the Selmer group of (the \'etale realisations of) $M$, and express the first non zero term in the Taylor expansion of $L(f, s)$ at $s=1$ in terms of arithmetic invariants of $M$.

\subsection{} The aim of this paper is to study instances of these conjectures for the base change of $M$ to a $CM$ extension $K/F$, when the order of vanishing of the relevant $L$-function is at most one. In this case we prove, under suitable assumptions, inequalities towards the special value formulas predicted by Bloch-Kato. Furthermore we are able to provide a criterion under which our inequalities can actually be shown to be equalities.

In order to state our main result we need to introduce some more notation: fix a place $\p$ of $E$ lying above a rational prime $p$; let $E_\p$ be the completion of $E$ at $\p$ and let $\Op$ be the ring of integers of $E_\p$. Let $\rho:Gal(\bar{F}/F)\rightarrow Aut(V(f))$ the $\p$-adic Galois representation attached to $f$. Choose a self-dual $Gal(\bar{F}/F)$-stable $\Op$-lattice $T(f)\subset V(f)$; set $A(f)=V(f)/T(f)$ and let $\bar{\rho}: Gal(\bar{F}/F)\rightarrow Aut(T(f)/\p)$ be the residual Galois representation attached to $f$.

\subsection{} Assume that $\n$ is squarefree and all its prime factors are inert in $K$. The sign of the functional equation of $L(f_K, s)$ equals 1 (resp. -1) if the number of prime ideals dividing $\n$ has the same (resp. opposite) parity as the degree $[F:\Q]$. In the first case, called the \emph{definite case}, one can define the \emph{algebraic part} of the special value $L(f_K, 1)$, denoted by $L^{alg}(f_K, 1)$ (see Remark \ref{fixtransf}); our result relates its $\p$-adic valuation $v_\p(L^{alg}(f_K, 1))$ to the length of the Selmer group $Sel(K, A(f))$. In the second case, called the \emph{indefinite case}, the representation $T(f)$ can be realised as a quotient of the $p$-adic Tate module of a suitable Shimura curve, and one can use points with $CM$ by $K$ on the curve to construct a Selmer class $c \in Sel(K, T(f))$ which is non zero if and only if $L'(f_K, 1)\neq 0$ (see \ref{defindefclass}); if this is the case let $v_\p(c)=\max\{k \geq 0 \mid c \in \p^k Sel(K, T(f))\}$. We will relate  $v_\p(c)$ to the length of the quotient of $Sel(K, A(f))$ by its divisible part.

\subsection{} We will work throughout the text with a certain class of automorphic forms modulo (powers of) $\p$, which we call \emph{admissible automorphic forms} (see Definition \ref{defadmeig}). Given such an automorphic form $h$ we will consider the Selmer group $Sel_{(\mathfrak{D}_h/\n)}(K, T_1(f))$ and the ``algebraic part of the special value'' $a(h) \in \Op/\p^n$ defined in (\ref{changesel}, \ref{defadmeig}). We can now state our main result:

\begin{teo}(cf. Theorems \ref{thbkdef}, \ref{mainmodnindef})\label{mainintro}
The notation being as above, assume that
\begin{enumerate}
\item the level $\n$ of $f$, the discriminant $disc(K/F)$ and the prime $p$ below $\p$ are coprime to each other. Moreover $p>3$ is unramified in $F$, and $\n$ is squarefree and all its factors are inert in $K$.
\item The image of the residual Galois representation $\bar{\rho}$ attached to $f$ contains $SL_2(\mathbf{F}_p)$.
\item For every prime $\q\mid \n$ we have $N(\q) \not \equiv -1 \pmod p$. Moreover if $N(\q) \equiv 1 \pmod p$ then $\bar{\rho}$ is ramified at $\q$.
\end{enumerate}

Then the following statements hold true:
\begin{description}
\item[Definite case] if $L(f_K, 1)\neq 0$ then $Sel(K, A(f))$ is finite and
\begin{equation*}
length _{\Op}Sel(K, A(f))\leq v_\p(L^{alg}(f_K, 1));
\end{equation*}
\item[Indefinite case] if $L'(f_K, 1)\neq 0$ then $Sel(K, A(f))$ has $\Op$-corank one and
\begin{equation*}
length _{\Op}Sel(K, A(f))/div\leq 2v_\p(c).
\end{equation*}
\end{description}

Moreover the above inequalities are equalities provided that the following implication holds true: if $h$ is an admissible automorphic form mod $\p$ and $Sel_{(\mathfrak{D}_h/\n)}(K, T_1(f))=0$ then $a(h)$ is a $\p$-adic unit.
\end{teo}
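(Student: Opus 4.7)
The plan is to prove the theorem by an inductive Euler system argument in the style of Bertolini-Darmon and Longo-Vigni, working systematically modulo powers of $\p$. I would first reformulate the definite and indefinite statements as bounds on $\p^n$-truncated Selmer groups $Sel(K, T_1(f))$ for each $n$, so that the asymptotic statement about $Sel(K, A(f))$ follows by passage to the limit (this is presumably the role of the two separate Theorems \ref{thbkdef} and \ref{mainmodnindef} cited). The argument then proceeds by downward induction on the length of the relevant Selmer group: at each step one exhibits a cohomology class whose localisation at a carefully chosen admissible prime $\q$ strips off a local condition, reducing the problem to a Selmer group of strictly smaller length while controlling the $\p$-adic valuation lost in the process.

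The backbone of the argument consists of two explicit reciprocity laws. Admissible primes are primes $\q$ of $F$ inert in $K$, coprime to $\n p$, with $N(\q)\equiv 1 \pmod{\p^n}$ and Frobenius acting on $T(f)/\p^n$ with eigenvalues $\pm N(\q)^{1/2}$. Under hypothesis (2), Chebotarev provides a copious supply of such primes and, in particular, admissible primes detecting any prescribed nonzero class in $Sel(K, A(f))[\p^n]$. Given such a $\q$, level-raising at $\q$ (Jacquet-Langlands combined with Ihara's lemma on Shimura curves, made possible by hypothesis (3)) produces an admissible automorphic form $h$ of level $\q\n$ congruent to $f$ modulo $\p^n$ on a quaternion algebra whose ramification differs at $\q$ from the one carrying $f$. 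The \emph{first reciprocity law} relates the singular part at $\q$ of a cohomology class coming from $CM$ points on the indefinite Shimura curve attached to $f$ (or an iterated level-raising thereof) to the value $a(h)$ of an admissible form $h$ on the corresponding definite side; the \emph{second reciprocity law} identifies, for a further admissible prime $\q'$, the finite part at $\q'$ of a class constructed from $h$ with the value $a(h')$ of yet another level-raised form. Together with Poitou-Tate duality, these two laws translate bounds on $Sel_{(\mathfrak{D}_h/\n)}(K, T_1(f))$ into bounds on $a(h)$.

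The induction then runs as follows. In the definite case I would pick a nonzero class $s \in Sel(K, A(f))[\p^n]$, use Chebotarev to find an admissible prime $\q$ detecting $s$, apply the first reciprocity to express the localisation at $\q$ of a Heegner-type class in terms of $a(h)$ for some $h$ of level $\q\n$, and then use a second admissible prime together with the second reciprocity to exchange one unit of length in $Sel$ for the corresponding valuation in $a(h)$; after finitely many steps the remaining quantity is bounded by $v_\p(L^{alg}(f_K,1))$. The indefinite case is parallel, starting from the $CM$-point class $c$ and its iterated level-raised cousins, with the factor $2$ arising from the fact that consuming one unit of length of $Sel(K, A(f))/\mathrm{div}$ costs two admissible primes, essentially because of the self-duality of $T(f)$ and the need to control both a finite and a singular local condition. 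The sharpening to equalities is built into the induction: the estimate loses a factor at a given step exactly when the corresponding $a(h)$ fails to be a unit while its auxiliary Selmer group vanishes, so the displayed implication is precisely what is needed to force each inequality to be an equality.

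The main obstacle lies in the rigorous proof of the two reciprocity laws. These rest on mod-$\p^n$ computations on the special fibres of the relevant Shimura curves at inert primes — Čerednik-Drinfeld uniformisation in the indefinite case and the geometry of the supersingular locus in the definite reduction — combined with the identification of Hecke eigenspaces via Ihara's lemma and multiplicity one for quaternionic forms. The hypotheses (1)-(3), notably the big image condition and the constraints on $N(\q) \bmod p$, are exactly what is needed to secure freeness of the relevant cohomology modules over localised Hecke algebras and a sufficient supply of admissible primes, both indispensable inputs for the inductive Chebotarev step to terminate with the optimal bound.
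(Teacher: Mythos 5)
Your high-level framework (Euler system, level raising, two reciprocity laws, Poitou--Tate, Chebotarev) is the right one, but several concrete points in your sketch are wrong or missing, and two of them would actually break the argument. First, your definition of admissible primes is incorrect. You require $N(\q)\equiv 1 \pmod{\p^n}$ with Frobenius eigenvalues $\pm N(\q)^{1/2}$; this is the Kolyvagin-type condition, and it is exactly the \emph{opposite} of what the Bertolini--Darmon-type argument needs. The paper requires $p\nmid N(\pl)^2-1$ (Definition \ref{defadm}), precisely so that $Fr_{K,\pl}$ acts on $T_n(f)$ with distinct eigenvalues $1$ and $N(\pl)^{-2}$, yielding the unique splitting $T_n(f)\simeq\Op/\varpi^n\oplus\Op/\varpi^n(1)$ of Lemma \ref{structadm} and hence the decomposition $H^1(K_\pl,T_n(f))=H^1_{ur}\oplus H^1_{tr}$ into free rank-one $\Op/\varpi^n$-modules. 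With your condition, the two eigenvalues coincide modulo $\varpi^n$ and this decomposition disappears, taking with it the entire apparatus of transverse local conditions, the pairing between $H^1_{ur}$ and $H^1_{tr}$, and both reciprocity laws. Second, the induction you describe — descending on the length of the Selmer group — is not the paper's and would not be well-founded: level raising at the two auxiliary primes changes the Selmer group in a way that can increase its length. The paper instead inducts on $t(g_k)=ord_\varpi(a(g_k))$, and the whole point of Lemma \ref{keylem} is that one can raise the level at \emph{two} admissible primes $\pl_1,\pl_2$ chosen so that $t(h_k)=t(g_k,\pl_1)=t(g_k,\pl_2)<t(g_k)$ strictly decreases while the combination $l_{\Op}Sel-2t$ is preserved (equation \eqref{keyeq}). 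Getting this equality to be an identity, not just an inequality, hinges on the cross-localisation condition $ord_\varpi\,loc_{\pl_1}(\kappa(\pl_2))=ord_\varpi\,loc_{\pl_2}(\kappa(\pl_1))=0$, which you do not mention.

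Two further structural points. You describe the indefinite case as ``parallel'' to the definite one, but the paper does not rerun the induction there: it picks a \emph{single} admissible prime $\pl$ with $loc_\pl(\kappa)\neq 0$, applies the second reciprocity law to produce a definite admissible form $g$ at level $\n\pl$ with $ord_\varpi(a(g_k))=t(f)$, shows $l_{\Op}Sel(K,A_k(f))=l_{\Op}Sel_{(\pl)}(K,A_k(f))+k$ via global duality, and then feeds $g$ into Theorem \ref{mainthm}. This reduction is a genuinely different (and cleaner) route than a parallel induction, and it is where the extra $k$ in $l_{\Op}Sel(K,A_k(f))\leq k+2ord_\varpi(c)$ (and hence the corank-one statement) comes from. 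Finally, you omit the freeness issue: the paper works modulo $\varpi^{2k}$ in order to prove a statement modulo $\varpi^k$, because Proposition \ref{freesys} (the existence of a free rank-one $\Op/\varpi^k$-submodule containing the reduction of $c(\pl)$) is needed to make sense of $ord_\varpi(\kappa(\pl))=0$, and this freeness is obtained precisely by passing from $n=2k$ down to $k$. Without this step the valuation bookkeeping in the induction does not go through.
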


\subsection{} Various results in the spirit of the above theorem have already been proved. In particular, the implication $L(f_K, 1)\neq 0 \Rightarrow Sel(K, V(f))=0$ was studied, in different degrees of generality, by several authors (among others \cite{bd05}, \cite{lon06}, \cite{lv10}, \cite{chi17}), and established under minimal assumptions for Hilbert modular forms of parallel weight two by Nekov\'{a}\v{r} \cite{nek12}. The idea underlying all these works, dating back to the seminal work \cite{bd05}, is to use a level raising of $f$ at well chosen primes $\pl$ in order to switch from the definite to the indefinite situation, and use $CM$ points on Shimura curves, available in the latter setting, to construct cohomology classes $c(\pl)$ which, via global duality, create an obstruction to the existence of Selmer classes whenever $L(f_K, 1)$ does not vanish. This is proved by establishing an \emph{explicit reciprocity law} (the first reciprocity law in \cite{bd05}) relating the localisation of $c(\pl)$ at $\pl$ to the special value $L(f_K, 1)$.

\subsection{} In \cite{bd05} a second reciprocity law was also proved, expressing the localisation of $c(\pl)$ at suitable primes $\pl'\neq \pl$ in terms of the special value of the $L$-function of a level raising of $f$ at the two primes $\pl$ and $\pl'$. The joint use of the two reciprocity laws makes an induction process possible, which was used in \cite{bd05} in order to prove one divisibility in the anticyclotomic Iwasawa main conjecture for weight two modular forms under suitable assumptions. This work was later generalised to modular forms of higher weight in \cite{chi15} and to Hilbert modular forms of parallel weight two (resp. higher parallel weight) in \cite{lon12} (resp. \cite{wa15}). The results in these papers only apply to modular forms which are \emph{ordinary} at primes above $p$; this assumption has been removed for modular forms of weight two in \cite{darjo08}, \cite{pw11}, under the hypothesis that the prime $p$ is split in the given imaginary quadratic field $K$; the case of inert primes is investigated in the preprint \cite{blv16}. A general argument allowing to deduce one divisibility in the anticyclotomic Iwasawa main conjecture from the two reciprocity laws is presented in \cite{how06}, where a condition implying that this divisibility is an equality is also given.

The above mentioned results on the Iwasawa main conjecture can be used to deduce, for ordinary Hilbert modular forms, the inequalities in our theorem (at least in the definite case); the main point of this work consists in observing that a refinement of the Euler system argument used in \cite{bd05}, \cite{lon12} can be employed to prove \emph{directly} the sought-for inequalities. In particular, we do not need to restrict to ordinary nor split primes. In fact, by \cite[Proposition 0.1]{dim05}, given a Hilbert newform $f$ which is not a theta series and a $CM$ extension $K/F$ satisfying the requirements in $(1)$, our result applies to all but finitely many primes $\p$.

\subsection{} In the case $F=\Q$, formulas relating the length of the Selmer group of the twist of $A(f)$ by certain anticyclotomic Hecke characters to special values of $L$-functions are proved in \cite{blv16}, both in analytic rank zero and one. The authors then use these formulas to deduce the relevant anticyclotomic Iwasawa main conjecture. Our work started from the observation that part of the argument in \cite{blv16} does not rely on Iwasawa theory, and can be adapted and generalised to the setting of Theorem \ref{mainintro} (see \ref{pastworks}).

\subsection{} Let us point out that the criterion that we give to upgrade our inequalities to equalities can be seen as a $GL_2$-version of Ribet's converse of Herbrand's theorem \cite{rib76} (see Remark \ref{reverseimp}). This follows from the Skinner-Urban divisibility in the Iwasawa main conjecture \cite{su14}, \cite{wan14}, proved for ordinary Hilbert modular forms in \cite{wan15}. Using this line of thought the authors of \cite{blv16} are able to establish the full equality in the anticyclotomic Iwasawa main conjecture for elliptic curves over $\Q$, both in the ordinary and supersingular case. In our setting, this approach does not allow for the time being to prove that the inequalities in Theorem \ref{mainintro} are always equalities; it would be interesting to know whether the result that we need to achieve this, a priori weaker than its Iwasawa-theoretic counterpart, can be established under the assumptions of our theorem.

\subsection{} The hypotheses in our theorem are quite strong, and many of them are mainly needed to apply the automorphic results used to construct the Euler system (in particular a suitable multiplicity one result and Ihara's lemma; see section \ref{reclaws}). In the special case $F=\Q$ everything would work under the (weaker) assumption $CR$ made in \cite{pw11}; as already mentioned, the advantage is that hypotheses like $a_p=0$ and $p$ split in $K$, which are made in \cite{pw11} in the supersingular case for Iwasawa-theoretic reasons, are unnecessary for us.

\subsection{} Let us briefly describe how our proof works, referring the reader to section \ref{esys} for the details. In the definite case we prove the result by induction on $t(f)=v_\p(L^{alg}(f_K, 1))$. When $t(f)=0$ it is easily seen that the existence of the classes $c(\pl)$ and the first reciprocity law force the vanishing of $l(f)=length_{\Op}Sel(K, A(f))$ (Corollary \ref{ineq0}). If $t(f)>0$, using both reciprocity laws and global duality we are able to show that \emph{either} $Sel(K, A(f))=0$ \emph{or} one can produce a level raising $g$ of $f$ modulo a high enough power of $\p$ such that $t(g)<t(f)$ and $t(g)-l(g)=t(f)-l(f)$. Let us stress that the Euler system argument we give is not enough to rule out the possibility that $t(f)>0$ and $Sel(K, A(f))=0$; one needs this additional input in order to promote our inequalities to equalities. This is not surprising, as the fact that $Sel(K, A(f))\neq 0$ if $t(f)>0$ is an \emph{existence statement} for non trivial Selmer classes, which one does not expect to follow from the algebraic manipulations at the heart of our arguments.

Finally, we deal with the indefinite case by essentially reducing it to the definite one via level raising and the second reciprocity law. A similar idea plays an important role in \cite{zha14} and is used in \cite{bbv16} to obtain a result close to ours over $\Q$. More general special value formulas in the analytic rank one case for elliptic curves over $\Q$ have been established in \cite{jeskiwa17} by different methods. It is perhaps also worth pointing out that using Kolyvagin's method (in its totally real version \cite{kolo91}, \cite{nek07}) one can obtain results in the indefinite case and then deduce information in the analytic rank zero case (at least concerning the rank of the relevant Selmer group over the base field) using non-vanishing results for the first derivative of $L$-functions \cite{bufriho90}. We instead proceed in the opposite way, treating the rank zero case first and then deducing the rank one case.

\subsection{} Finally, let us mention that analogues of the reciprocity laws used in this paper have been (partially) established in other contexts \cite{liti17}, \cite{liu19}, \cite{zho19} and used to bound the ranks of the Selmer groups of suitable motives. We hope that our arguments can be of use in these settings to prove (inequalities towards) the expected special value formulas.

\subsection{Structure of the paper} In section 2 we fix our notation and introduce our main objects of interest, namely Hilbert modular forms (as well as automorphic forms on other quaternion algebras) and the associated Galois representations. In section 3 we recall the special value formulas of S. Zhang for the central value and first derivative of the $L$-functions of Hilbert modular forms. In section 4 we introduce Bloch-Kato Selmer groups for the representations of interest to us and explicitly describe the relevant local conditions in our setting. In section 5 we state our main theorem in the definite case and reduce it to a statement on finite Selmer groups. Section 6 introduces the cohomology classes and reciprocity laws needed to prove this statement. Finally, those are used to prove our main result in the definite setting in section 7, which is the heart of this paper. The indefinite case is dealt with in section 8.

\subsection{Notation and conventions}\label{nots}
We fix once for all a rational prime $p$, embeddings $\iota_\infty: \bar{\Q}\rightarrow \C, \iota_p:\bar{\Q}\rightarrow \bar{\Q}_p$ and an isomorphism $\bar{\Q}_p\xleftrightarrow{\sim}\C$ compatible with the two embeddings.\\
The symbol $Fr$ denotes \emph{geometric} Frobenius, unless stated otherwise. Accordingly, the Artin map of global class field theory is normalised so that uniformisers correspond to geometric Frobenius elements.\\
The absolute Galois group of a field $L$ is denoted by $\Gamma_L$. The completion of a number field $L$ at a place $v$ is denoted by $L_v$. If $M$ is a $\Gamma_L$-module and $c \in H^1(L, M)$ then the restriction of $c$ to $H^1(L_v, M)$ will be denoted by $loc_v(c)$.\\
We will write $M \simeq N$ to denote that two objects $M, N$ are isomorphic. The cardinality of a set $X$ will be denoted by $\# X$.\\
We let $\hat{\Z}=\varprojlim_n \Z/n\Z$ and, for an abelian group $A$, we set $\hat{A}=A\otimes_\Z\hat{\Z}$. For example the ring of finite adeles of $\Q$ is $\af=\Q\times_\Z \hat{\Z}=\hat{\Q}$.

\subsection*{Acknowledgements} The work presented in this paper was carried out during the author's PhD at the University of Duisburg-Essen, supported by SFB/TR45 “Periods, Moduli Spaces and Arithmetic of Algebraic Varieties”; he wishes to express his gratitude to all the members of the ESAGA group in Essen, and thanks the authors of \cite{blv16} for sharing a draft of their paper. This work began while the author was a guest at CIB, EPF Lausanne during the special semester “Euler systems and special values of $L$-functions”. He is very grateful to the organisers for the invitation and for the excellent working conditions offered throughout the semester. The author would also like to thank Jan Nekov\'{a}\v{r} for spotting some inaccuracies in an earlier version of this text and for providing several useful comments. This paper was completed while the author was a Research Associate at Imperial College, supported by the ERC Grant 804176.

\section{Quaternionic automorphic forms}

\subsection{} In this section we introduce the main objects we will work with, namely Hilbert modular forms and the associated Galois representations, Shimura curves and quaternionic sets. The material in this section is well known, hence we will provide no proof. For a more detailed discussion the reader is referred to \cite[Chapter 12]{nek06} and the references therein.

\subsection{Hilbert modular forms} Let us fix a totally real number field $F$ of degree $r>1$ with ring of integers $\OF$; let $G=Res_{F/\Q}GL_{2,F}$ and let $U\subset G(\af)$ be a compact open subgroup. We denote by $M(U)$ the space of Hilbert modular forms of parallel weight two and level $U$ with trivial central character and by $S(U)$ the subspace of cusp forms. They are equipped with an action of the Hecke algebra $\mathcal{H}(U\setminus G(\af)/U)$ of compactly supported, left and right $U$-invariant functions $G(\af)\rightarrow \C$.\\
Let $\n \subset \OF$ be an ideal such that $(\n, p)=(1)$. In what follows we will work with Hilbert modular forms of level $U_1(\n)$, where
\begin{equation*}
U_1(\n)=\left\{\begin{pmatrix}
a & b\\
c & d
\end{pmatrix} \in GL_2(\hat{\mathcal{O}}_F) \: : c, d-1 \equiv 0 \pmod {\hat{\n}}\right\}.
\end{equation*}

The corresponding spaces of modular (resp. cusp) forms will be denoted by $M(\n)$ (resp. $S(\n)$). Let $v$ be a finite place of $F$ not dividing $\n$ and $A_v=U_1(\n)\begin{pmatrix}
\varpi_v & 0\\
0 & 1
\end{pmatrix}U_1(\n)$, where $\varpi_v$ is a uniformiser of $F_v$. We denote by $T_v: M(\n)\rightarrow M(\n)$ the Hecke operator corresponding to the function
\begin{equation*}
\frac{1}{\int_{A_v}dg}\mathbf{1}_{A_v}
\end{equation*}
where $\mathbf{1}_{A_v}$ is the characteristic function of $A_v$ and $dg=\prod_{v \nmid \infty}dg_v$, where $dg_v$ is the Haar measure on $GL_2(F_v)$ normalised imposing $\int_{GL_2(\mathcal{O}_{F_v})}dg=1$. If $v$ divides $\n$ the same operator will be denoted by $U_v$. We denote by $\Tn\subset \mathcal{H}(U_1(\n)\setminus G(\af)/U_1(\n))$ the ring generated by the Hecke operators $T_v$ for $v \nmid \n$ and $U_v$ for $v|\n$. A cusp form $f \in S(\n)$ is called a \emph{newform} (of parallel weight two, with trivial central character) if it is an eigenvector for all the operators in $\Tn$, it is new at every place $w|\n$ and the constant term in its Fourier expansion equals one. A newform $f$ gives rise to a ring morphism
\begin{equation*}
\lambda_f:\Tn\rightarrow \C
\end{equation*}
sending an Hecke operator $T \in \Tn$ to the number $\lambda_f(T) \in \C$ such that $T\cdot f=\lambda_f(T)f$.
The $L$-function of $f$ is defined as the Euler product
\begin{equation*}
L(f, s)=\prod_{v|\n}(1-\lambda_f(U_v)N(v)^{-s})^{-1}\prod_{v \nmid \n}(1-\lambda_f(T_v)N(v)^{-s}+N(v)^{1-2s})^{-1}
\end{equation*}
yielding a holomorphic function on the half-plane $Re \: s >\frac{3}{2}$.

\subsection{Galois representations attached to newforms} Let $f \in S(\n)$ be a newform and $\mathcal{O}$ the ring generated by the eigenvalues $\lambda_f(T_v)$, $\lambda_f(U_v)$ of the Hecke operators acting on $f$. It is an order in the ring of integers of a number field $E\subset \C$ which is totally real (since $f$ has trivial central character). Thanks to the work of several people (\hspace{1sp}\cite{oht82}, \cite{wi88}, \cite{tay89}, \cite{blaro89}) one can attach to $f$ a compatible system of Galois representations

\begin{equation*}
\rho_{f, \pi}:\Gamma_F \rightarrow Aut(V_{f, \pi})
\end{equation*}
where, for each finite place $\pi$ of $E$, $V_{f, \pi}$ is a 2-dimensional vector space over the completion $E_\pi$ of $E$ at $\pi$. We denote simply by 
\begin{equation*}
\rho_{f}:\Gamma_F \rightarrow Aut(V_f)
\end{equation*}
the Galois representation corresponding to the place $\p$ of $E$ induced by the isomorphism $\C\xleftrightarrow{\sim}\bar{\Q}_p$ fixed at the beginning \ref{nots}. Let $\Op$ be the ring of integers of $E_\p$ and $\varpi$ a uniformiser of $\Op$. The representation $\rho_f$ enjoys the following properties:
\begin{enumerate}
\item it is unramified outside $\n p$;
\item for every finite place $v$ of $F$ not dividing $\n p$ we have
\begin{equation*}
det(1-Fr_v N(v)^{-s}|V_f)=1-\lambda_f(T_v)N(v)^{-s}+N(v)^{1-2s};
\end{equation*}
\item for $v \nmid \n p$ the eigenvalues of $Fr_v$ acting on $V_f$  are $v$-Weil numbers of weight $1$;
\item it is absolutely irreducible.
\end{enumerate}

By (4), the Brauer-Nesbitt theorem and the Chebotarev density theorem $\rho_f$ is uniquely characterised up to isomorphism by the property (2), which determines the trace of almost all Frobenius elements. Moreover (2) implies that
\begin{equation*}
det(V_f)=\wedge^2 V_f\simeq E_\p(-1)
\end{equation*}
hence $V_f^*=Hom(V_f, E_\p)\simeq V_f(1)$. Letting $V(f)=V_f(1)$ it follows that $V(f)$ is \emph{self-dual}, i.e. there is a skew-symmetric, non degenerate, $\Gamma_F$-equivariant pairing
\begin{equation*}
V(f)\times V(f) \rightarrow E_\p(1)
\end{equation*}
yielding an identification $V(f)\simeq Hom_{\Gamma_F}(V(f), E_\p(1))$.

We choose a $\Gamma_F$-stable $\Op$-lattice $T(f)\subset V(f)$ such that the above pairing (possibly scaled by a constant) induces a perfect pairing
\begin{equation*}
T(f)\times T(f)\rightarrow \Op(1)
\end{equation*}
hence perfect pairings
\begin{align*}
T(f)\times A(f)\rightarrow & E_\p/\Op(1)\\
T_n(f)\times A_n(f)\rightarrow & E_\p/\Op(1), \; n\geq 0,
\end{align*}
where $A(f)=V(f)/T(f)$, $A_n(f)=A(f)[\varpi^n]\simeq T_n(f)=T(f)/\varpi^n$.
\begin{ass}\label{absred}
Assume that the residual Galois representation $T_1(f)$ is irreducible (hence absolutely irreducible).
\end{ass}
Under the above assumption the isomorphism class of the Galois representations $T(f), T_n(f)$ does not depend on the choice of the lattice $T(f)$.

We will need the following information on the local structure of the $\Gamma_F$-module $V(f)$:
\begin{lem}\label{locm}(cf. \cite[12.4.4.2, 12.4.5]{nek06})
If $v$ is a place of $F$ dividing exactly $\n$ then $V(f)_{|\Gamma_{F_v}}$ is of the form
\begin{equation*}
\begin{pmatrix}
\mu \chi_{cyc} & *\\
0 & \mu
\end{pmatrix}
\end{equation*}
where $\chi_{cyc}$ is the cyclotomic character and $\mu$ is a quadratic unramified character.
\end{lem}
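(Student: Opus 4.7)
The plan is to deduce this from the local-global compatibility of the Langlands correspondence, combined with a direct analysis of the local automorphic representation $\pi_v$ of $f$ at a prime dividing $\n$ exactly once.

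First I would identify $\pi_v$. Since $f$ is a newform of level $U_1(\n)$ and $v$ divides $\n$ exactly once, $\pi_v$ has conductor $v^1$ and therefore must be a twist of the Steinberg representation by an unramified quasi-character $\mu$ (the only irreducible admissible representations of $GL_2(F_v)$ of conductor $v$ are such twists). The trivial central character of $f$ forces $\mu^2$ to be the trivial character; since $\mu$ is unramified, $\mu$ is determined by $\mu(\varpi_v)\in\{\pm 1\}$ and in particular is a quadratic unramified character. Moreover, $\mu(\varpi_v)$ is nothing but the $U_v$-eigenvalue $\lambda_f(U_v)$, which fits with the fact that on a newform at $v\mid\n$ one has $\lambda_f(U_v)\in\{\pm 1\}$.

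Next I would invoke local-global compatibility at $v \nmid p$ for the Galois representation $\rho_f$ (in the form proved by Carayol and refined by Saito, Skinner and others for the Hilbert modular setting, cf. the references cited in Nekov\'a\v{r}). This says that the Weil--Deligne representation $WD(V_f|_{\Gamma_{F_v}})$ is isomorphic to the one associated to $\pi_v$ by the local Langlands correspondence, appropriately normalised. For $\pi_v \simeq St\otimes\mu$ the corresponding Weil--Deligne representation is $\mu \otimes Sp(2)$, with nilpotent monodromy operator and Frobenius semisimplification of the form $\mathrm{diag}(\mu\|\cdot\|,\mu)$ on the Weil group side; translating this back to a continuous $\Gamma_{F_v}$-representation produces an upper-triangular extension
\begin{equation*}
V_f|_{\Gamma_{F_v}} \sim \begin{pmatrix} \mu\chi_{cyc} & * \\ 0 & \mu \end{pmatrix}
\end{equation*}
(after accounting for the conventions: $Fr$ is geometric, and $\|\cdot\|$ on the Weil group translates to $\chi_{cyc}$ or its inverse on inertia-trivial elements).

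Finally I would apply the Tate twist: by definition $V(f)=V_f(1)$, which multiplies both diagonal characters by $\chi_{cyc}$. Because $\mu$ is self-dual (being quadratic), the self-duality of $V(f)$ forces the diagonal to be $\{\mu\chi_{cyc},\mu\}$ rather than $\{\mu\chi_{cyc}^2,\mu\chi_{cyc}\}$; more directly, one checks that the correct normalisation of local Langlands compatible with the self-dual twist $V(f)=V_f(1)$ yields precisely the shape stated in the lemma. The main subtlety is purely bookkeeping of the normalisations (geometric vs.\ arithmetic Frobenius, the Tate twist, and which of the two Jordan--H\"older factors carries the $\chi_{cyc}$); once these are fixed consistently with \ref{nots}, the lemma follows. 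The nontriviality of the extension (i.e.\ the possibly nonzero $*$) is not asserted here and requires no argument, although it does hold because $\pi_v$ is a twist of Steinberg rather than a principal series.
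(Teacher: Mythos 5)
The paper does not actually prove this lemma --- it simply cites Nekov\'a\v{r}'s book, where the result is established via local-global compatibility of Galois and automorphic representations (Carayol's theorem). Your approach via the identification of $\pi_v$ as an unramified twist of Steinberg and local-global compatibility is exactly the right strategy and matches the substance of the cited reference.

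There is, however, a real bookkeeping error in your writeup which you flag but do not resolve. You display
\begin{equation*}
V_f|_{\Gamma_{F_v}} \sim \begin{pmatrix} \mu\chi_{cyc} & * \\ 0 & \mu \end{pmatrix},
\end{equation*}
but this is incompatible with the paper's convention $\det V_f \simeq E_\p(-1)$: that matrix has determinant $\mu^2\chi_{cyc}=\chi_{cyc}$, the opposite sign. In the geometric-Frobenius normalisation fixed in \ref{nots}, the Weil group character $\|\cdot\|$ corresponds to $\chi_{cyc}^{-1}$, so the WD representation $\mathrm{diag}(\mu\|\cdot\|,\mu)$ translates to a $\Gamma_{F_v}$-representation with graded pieces $\mu\chi_{cyc}^{-1}$ and $\mu$, and the correct shape is
\begin{equation*}
V_f|_{\Gamma_{F_v}} \sim \begin{pmatrix} \mu & * \\ 0 & \mu\chi_{cyc}^{-1} \end{pmatrix}
\end{equation*}
(sub $\mu$, quotient $\mu\chi_{cyc}^{-1}$; determinant $\chi_{cyc}^{-1}$, as required). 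Twisting by $E_\p(1)$ then gives directly the stated form for $V(f)$, with sub $\mu\chi_{cyc}$ and quotient $\mu$ --- no appeal to self-duality to ``force'' the diagonal is needed or appropriate; self-duality is a consequence, not the reason. The sub/quotient orientation is pinned down by the monodromy operator $N$ sending the $\mu$-eigenline onto the $\mu\|\cdot\|$-eigenline, or more concretely by the Tate-curve model at a place of multiplicative reduction. With that correction the argument is complete; the rest of your proposal (conductor argument forcing $\pi_v\simeq \mathrm{St}\otimes\mu$, triviality of the central character forcing $\mu^2=1$, $\mu(\varpi_v)=\lambda_f(U_v)$) is correct.
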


\subsection{Shimura curves} Let $B/F$ be a quaternion algebra split at \emph{exactly one} infinite place $\tau$ of $F$. For $U\subset \hat{B}^\times$ compact open one disposes of the space $S^{B^\times}(U)$ of (cuspidal) automorphic forms for $B^\times$ of level $U$ and weight $2$, endowed with an action of $\mathcal{H}(U\backslash \hat{B}^\times /U)$. On the other hand we can consider the Shimura curve $Y_U$ whose complex points are given by
\begin{equation}\label{compunif}
Y_U^{an}=B^\times\backslash (\C\setminus \R)\times \hat{B}^\times/U.
\end{equation}
where $B^\times$ acts on $\C\setminus \R$ via the embedding $B\subset B\otimes_{F, \tau} \R=M_2(\R)$ and the action of $GL_2(\R)$ on $\C\setminus \R$ by Möbius transformations. Every element of the Hecke algebra $\mathcal{H}(U\backslash \hat{B}^\times /U)$ gives rise to a correspondence on the curve $Y_U$, hence the Hecke algebra acts on $H^0(Y_U^{an}, \Omega_\C)$. In fact, there is a canonical, Hecke equivariant identification (see \cite[(1.6)]{nek07})
\begin{equation}\label{modshimdiff}
S^{B^\times}(U)\xrightarrow{\sim} H^0(Y_U^{an}, \Omega_\C).
\end{equation}
Together with the Hodge decomposition and the comparison between Betti and \'etale cohomology, this relates weight two cuspidal eigenforms for $B^\times$ to systems of Hecke eigenvalues in the \'etale cohomology of $Y_U$.\\
The center $\hat{F}^\times \subset \hat{B}^\times$ acts on $Y_U$. Denoting by $[z, b]$ a point of $Y_U^{an}$, where $z \in \C \setminus \R$ and $b \in \hat{B}^\times$, the action of an element $g \in \hat{F}^\times$ is given by $g \cdot [z, b]=[z, bg]$; hence the action of $\hat{F}^\times$ factors through the finite group $C_U=F^\times \backslash \hat{F}^\times/(\hat{F}^\times \cap U)$. The quotient $X_U=Y_U/C_U$ is a smooth projective scheme, and \eqref{modshimdiff} induces an identification
\begin{equation*}
S^{B^\times/Z}(U)\xrightarrow{\sim} H^0(X_U^{an}, \Omega_\C).
\end{equation*}
where we denoted by $S^{B^\times/Z}(U)\subset S^{B^\times}(U)$ the subspace of automorphic forms with trivial central character. Since those are the only automorphic forms we will consider, we will consistently work with the quotient Shimura curves $X_U$.

\subsection{Automorphic forms on totally definite quaternion algebras} Finally, we will need to work with automorphic forms (of ``parallel weight two") on totally definite quaternion algebras. Let $B/F$ be a quaternion algebra ramified at every infinite place and $U\subset \hat{B}^\times$ a compact open subgroup. Then the space of weight two automorphic forms for $B^\times$ of level $U$ is defined as
\begin{equation*}
S^{B^\times}(U)=\{f: B^\times \backslash \hat{B}^\times/U\rightarrow \C\}=\C[B^\times \backslash \hat{B}^\times/U];
\end{equation*} 
notice that $B^\times \backslash \hat{B}^\times/U$ is a finite set, which we will sometimes call a \emph{quaternionic set}. Automorphic forms with trivial central character are those which factor through $B^\times \backslash \hat{B}^\times/\hat{F}^\times U$. We will also need to work with automorphic forms modulo (powers of) $p$. For our minimal needs, it will be enough to dispose of this notion for totally definite quaternion algebras, in which case the definition is straightforward:
\begin{defin}\label{intaut}
Let $B$ be a totally definite quaternion algebra and $A$ a commutative ring. We define the space of $A$-valued automorphic forms for $B^\times$ of level $U$ as
\begin{equation*}
S^{B^\times}(U, A)=\{ f: B^\times \backslash \hat{B}^\times/U\rightarrow A \}=A[B^\times\backslash \hat{B}^\times/U]
\end{equation*}
and we define $S^{B^\times/Z}(U, A)$ by requiring $\hat{F}^\times$-invariance in addition.
\end{defin}

\section{L-functions and special value formulas}

\subsection{} The aim of this section is to recall the special value formulas due to S. Zhang relating the central value (resp. first derivative) of the $L$-function of a Hilbert newform to special points on quaternionic sets (resp. Shimura curves). These formulas are the key analytic input for our work, whose aim will be to exploit these points, and the relations among them, in order to bound the Selmer group attached to the relevant Hilbert modular form. Proofs of the formulas can be found in \cite{zh04} and, in more detail and generality, in \cite{yzz13} (see also \cite{cashti14}). 

\subsection{} Let $f \in S(\n)$ be a newform (with trivial central character); let $K/F$ be a totally imaginary quadratic extension satisfying $(\n, disc(K/F))=1$. Let $L(f_K, s)=(\Gamma_\C(s)^{[F:\Q]})L^\infty(f_K, s)$ where $L^\infty(f_K, s)$ is the $L$-function of the compatible system of Galois representations $\{V_{f, \pi}|_{\Gamma_K}\}_\pi$ and $\Gamma_\C(s)=2(2\pi)^{-s}\Gamma(s)$. The function $L(f_K, s)$ admits holomorphic continuation to the whole complex plane, and it satisfies a functional equation of the form $L(f_K, s)=\epsilon(f_K, s)L(f_K, 2-s)$. In particular the parity of the order of vanishing of $L(f_K, s)$ at $s=1$ is determined by $\epsilon(f_K, 1)$. Our assumption that $(\n, disc(K/F))=1$ implies that we can write $\n=\np\nm$, where $\np$ (resp. $\nm$) is divisible only by primes which are split (resp. inert) in $K$; let us furthermore assume that $\nm$ is squarefree. Then the value $\epsilon(f_K, 1)$, which we will simply denote by $\epsilon(f_K)$, is determined as follows:

\begin{enumerate}
\item If $r=[F: \Q]\equiv \#\{\q: \q\mid \nm\} \pmod 2$ then $\epsilon(f_K)=1$; this is called the \emph{definite case}. In this situation we will be interested in the special value $L(f_K, 1)$.
\item If $r \not \equiv \#\{\q: \q\mid \nm\} \pmod 2$ then $\epsilon(f_K)=-1$; this is called the \emph{indefinite case}. In this situation the functional equation forces the vanishing of the central value $L(f_K, 1)$, and we will look instead at $L'(f_K, 1)$.
\end{enumerate}

\subsection{The definite case.} Suppose that $[F: \Q]\equiv \#\{\q: \q\mid \nm\} \pmod 2$; let $B/F$ be the quaternion algebra ramified at all primes dividing $\nm$ as well as at all infinite places. Then $f$ can be transferred, via the Jacquet-Langlands correspondence, to an automorphic form
\begin{equation*}
f_B: B^\times\backslash \hat{B}^\times/\hat{F}^\times\hat{R}^\times\rightarrow \C
\end{equation*}
where $R\subset B$ is an Eichler order of level $\np$. We normalize $f_B$ requiring its Petersson norm to be 1 (the Petersson product being just a finite sum in this case). Fix an $R$-optimal embedding $\iota: K\hookrightarrow B$ (i.e. such that $\iota^{-1}(R)=\mathcal{O}_K$), inducing a map
\begin{equation}\label{hatiota}
\hat{\iota}: K^\times \backslash \hat{K}^\times/\hat{F}^\times \hat{\mathcal{O}}_K^\times \rightarrow B^\times\backslash \hat{B}^\times/\hat{F}^\times\hat{R}^\times;
\end{equation}
Let $a(f)=\sum_{P \in K^\times \backslash \hat{K}^\times/\hat{F}^\times \hat{\mathcal{O}}_K^\times}f_B(\hat{\iota}(P)) \in \C$.
\begin{teo}(\hspace{1sp}\cite[Theorem 7.1]{zh04})
The following equality holds:
\begin{equation}\label{specval} 
L(f_K, 1)=\frac{2^r}{\sqrt{N(disc(K/F))}}\cdot \langle f, f \rangle_{Pet}\cdot |a(f)|^2.
\end{equation}
\end{teo}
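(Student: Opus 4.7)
The plan is to prove this Waldspurger-type identity by transferring to the definite quaternionic side via Jacquet--Langlands, realising $a(f)$ as a toric period on $B^\times$, and then computing the square of this period by a seesaw/Rankin--Selberg argument and matching local factors. Since $B$ is totally definite and $f_B$ is invariant under $\hat{F}^\times \hat{R}^\times$, the finite sum $a(f) = \sum_{P} f_B(\hat\iota(P))$ is, up to an explicit volume, the toric period integral
\[
P_{\hat\iota}(f_B) \;=\; \int_{K^\times \mathbf{A}_F^\times \backslash \mathbf{A}_K^\times} f_B(\hat\iota(t))\, dt
\]
with trivial character; and Jacquet--Langlands provides the equalities $L((f_B)_K, s) = L(f_K, s)$ and, with the matching normalisations, $\langle f_B, f_B\rangle_{Pet} = \langle f, f\rangle_{Pet}$, so the identity \eqref{specval} is intrinsic to the quaternionic side.

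The core step is to expand
\[
|a(f)|^2 \;=\; \sum_{P, Q} f_B(\hat\iota(P))\, \overline{f_B(\hat\iota(Q))}
\]
and recognise it as an integral of $f_B \otimes \overline{f_B}$ against a theta kernel attached to $B$, viewed as a $4$-dimensional quadratic space over $F$ via the reduced norm. A seesaw identity for the dual reductive pair $(\mathrm{GL}_2, \mathrm{O}(B))$ together with the subpair arising from the orthogonal decomposition $B = \iota(K) \oplus \iota(K)^\perp$ then rewrites the left hand side as a Rankin--Selberg integral of $f$ against the theta series $\theta_K$ on $\mathrm{GL}_{2,F}$ attached to $K$ as a binary quadratic space, paired with a degenerate Eisenstein series. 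Standard unfolding and the Jacquet--Langlands--Shalika integral representation identify this Rankin--Selberg integral with the central value $L(f_K, 1)$, yielding a proportionality $|a(f)|^2 = C(f, K) \cdot L(f_K, 1) \cdot \langle f, f\rangle_{Pet}^{-1}$, where $C(f, K)$ is a product of purely local integrals.

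The principal obstacle is pinning down $C(f, K) = \sqrt{N(\mathrm{disc}(K/F))}/2^r$. The factor $2^{-r}$ arises from normalising the weight-two discrete series matrix coefficient at each real place of $F$, while $\sqrt{N(\mathrm{disc}(K/F))}$ comes from comparing the Tamagawa measure on $\mathbf{A}_K^\times/\mathbf{A}_F^\times$ with the counting measure implicit in the definition of $a(f)$. Every non-archimedean local toric integral has to be shown equal to $1$: at places where $B_v$ is split and $v \nmid \n$ this is the standard Macdonald-type unramified computation, but at split primes in $\np$ (where $R$ is Eichler of level $\np$) and at inert primes in $\nm$ (where $B_v$ is the local division algebra), one must verify that the $R$-optimality of $\iota$ selects precisely the \emph{optimal test vector} which makes the local Hom functional nonvanishing, and then evaluate the integral explicitly. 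This test-vector computation, which rests on local multiplicity one for $(\mathrm{GL}_2, K^\times)$, is the technical heart of \cite{zh04}.
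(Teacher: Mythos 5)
The paper does not prove this statement: it is quoted directly from Zhang, with the explicit remark that proofs may be found in \cite{zh04} and \cite{yzz13}, so there is no in-paper argument to compare against. Your outline is a faithful road map of the strategy actually used in those references (Waldspurger's theta/seesaw method, Rankin--Selberg unfolding, and local test-vector computations at the places dividing $\n$), but it remains a plan rather than a proof: the local integrals at ramified places and the normalisation constants $2^{-r}$ and $\sqrt{N(disc(K/F))}$, which you correctly single out as the technical heart, are precisely what the cited works spend their effort establishing, and your sketch asserts rather than verifies them (in particular the claim that Jacquet--Langlands ``matches'' Petersson norms glosses over the period-normalisation issue the paper itself flags in Remark \ref{fixtransf}).
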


\begin{rem}\label{fixtransf}
Since we will have to work with integral automorphic forms we need to make a different choice of Jacquet-Langlands transfer $f_B$, which results in a different period appearing in the special value formula in place of $\langle f, f \rangle_{Pet}$. For a discussion of this issue we refer the reader to \cite[Section 2]{vat03} (see also \cite[Section 3.3]{lon07}). For our purposes, let us recall that we can, and will, choose $f_B \in S^{B^\times/Z}(\hat{R}^\times, \mathcal{O})$, where $\mathcal{O}$ is the ring generated by the Hecke eigenvalues of $f$, and such that the image of $f_B$ in $\Op$ contains a $\mathfrak{p}$-adic unit. This determines $f_B$ up to multiplicaiton by a $\p$-adic unit, and tor such a choice the above formula translates into
\begin{equation}\label{lalg}
C \cdot \frac{L(f_K, 1)}{\Omega^{Gr}}=|a(f)|^2
\end{equation}
where $C= \frac{\sqrt{N(disc(K/F))}}{2^r}$ and $\Omega^{Gr}=\frac{\langle f, f \rangle_{Pet}}{\eta_B}$ is the \emph{Gross period}, quotient of $\langle f, f \rangle_{Pet}$ by the \emph{congruence number} $\eta_B$.

In particular the value $C \cdot \frac{L(f_K, 1)}{\Omega^{Gr}}$ is an algebraic number, called the \emph{algebraic part} of the special value $L(f_K, 1)$ and denoted by $L^{alg}(f_K, 1)$. It follows from equation \eqref{lalg} that its $\p$-adic valuation is
\begin{equation*}
v_\p(L^{alg}(f_K, 1))=2v_\p(a(f)).
\end{equation*}
\end{rem}

\subsection{The indefinite case}\label{specvalind}
Let us now assume that $r \not \equiv \#\{\q: \q\mid \nm\} \pmod 2$; let $B/F$ be the quaternion algebra ramified at all primes dividing $\nm$ and at all but one infinite place; let $R \subset B$ be an order of conductor $\np$ and fix an $R$-optimal embedding $K \hookrightarrow B$ as before. Let $P_K \in X_{\hat{R}^\times}(\C)$ be a point with $CM$ by $\mathcal{O}_K$; via the complex uniformisation \eqref{compunif} we can take $P_K=[z, 1]$ where $z \in \C\setminus \R$ is the only point in the upper half plane fixed by the action of $K^\times \subset B^\times$. The point $P_K$ is an algebraic point of $X_{\hat{R}^\times}$, defined over the abelian extension of $K$ whose Galois group is identified with $K^\times\backslash \hat{K}^\times/\hat{F}^\times \hat{\mathcal{O}}_K^\times$ via Artin's reciprocity map.

Let $Q_K=\sum_{\sigma \in K^\times\backslash \hat{K}^\times/\hat{F}^\times \hat{\mathcal{O}}_K^\times}\sigma(P_K) \in Div(X_{\hat{R}^\times})$ and let $a(f)$ be the $f_B$-isotypical part of $Q_K - deg(Q_K)\xi \in Jac(X_{\hat{R}^\times})(K)\otimes \Q$, where $f_B \in S^{B^\times/Z}(\hat{R}^\times)$ is a Jacquet-Langlands transfer of $f$ and $\xi \in CH^1(X_{\hat{R}^\times})\otimes \Q$ is the Hodge class \cite[pag. 202]{zh04}.

\begin{teo}(\cite[Theorem 6.1]{zh04})
The following equality holds:
\begin{equation*}
L'(f_K, 1)=\frac{2^{r+1}}{\sqrt{N(disc(K/F))}}\cdot \langle f, f \rangle_{Pet}\cdot \langle a(f), a(f)\rangle_{NT}
\end{equation*}
where $\langle -, -\rangle_{NT}$ is the Neron-Tate height.
\end{teo}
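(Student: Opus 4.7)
The plan is to adapt the Gross-Zagier strategy, as carried out by S.~Zhang for Shimura curves over totally real fields, reducing the identity to a place-by-place matching between an analytic kernel calculation and a geometric height calculation. First I would construct a family of automorphic kernels $\Phi(g, s)$ on $G(\mathbf{A})$ obtained as theta lifts of suitable Eisenstein series (or via a doubling-type integral), normalised so that the Rankin-Selberg convolution
\begin{equation*}
\langle f, \Phi(\cdot, s)\rangle_{\mathrm{Pet}}
\end{equation*}
reproduces $L(f_K, s)$ up to the Euler and gamma factors appearing in the statement. Since in the indefinite case $\epsilon(f_K)=-1$ forces $L(f_K, 1)=0$, differentiating at $s=1$ collapses the analytic side to a multiple of $\langle f, \partial_s\Phi|_{s=1}\rangle_{\mathrm{Pet}}$, and the derivative of the kernel can be analysed Fourier-coefficient by Fourier-coefficient.

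Next I would rewrite $\langle a(f), a(f)\rangle_{NT}$ as the $f_B$-isotypical projection of Hecke-twisted self-pairings of $Q_K - \deg(Q_K)\xi$ in $\mathrm{Jac}(X_{\hat{R}^\times})(K)\otimes\mathbf{Q}$, and then apply the standard decomposition of the global Néron-Tate height into a sum of local heights
\begin{equation*}
\langle P, TQ\rangle_{NT} \;=\; \sum_v \langle P, TQ\rangle_v.
\end{equation*}
The archimedean local heights are computed as Green's functions on $B^\times\backslash(\C\setminus\R)\times\hat{B}^\times/\hat{R}^\times$ evaluated at the CM point $z$, while the non-archimedean ones are arithmetic intersection multiplicities on a regular integral model of $X_{\hat{R}^\times}$, using Serre-Tate deformation theory at primes of good reduction and Cerednik-Drinfeld $p$-adic uniformisation at primes dividing $\nm$ where $B$ ramifies locally.

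The core of the proof is the place-by-place matching: for each place $v$ of $F$ and each Hecke index $m$, one shows that the local height $\langle P_K, T_m P_K\rangle_v$ coincides with the corresponding local contribution to the $m$-th Fourier coefficient of $\partial_s\Phi|_{s=1}$, up to normalising constants that recombine globally into the factor $\frac{2^{r+1}}{\sqrt{N(\mathrm{disc}(K/F))}}\cdot \langle f, f\rangle_{\mathrm{Pet}}$. The archimedean matching amounts to identifying a Legendre-function Green's kernel on the upper half-plane with the archimedean derivative of the Whittaker function, which is a relatively clean analytic computation.

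I expect the main obstacle to be the non-archimedean matching at primes dividing $\nm$. There the arithmetic intersection numbers of CM divisors must be computed on the Drinfeld-Cerednik formal model in terms of distances in the Bruhat-Tits tree of the local quaternion algebra, and matched with a delicate local calculation of derivatives of Whittaker functions for the ramified representation. A secondary difficulty is that the Hodge class correction $\xi$ and the choice of $R$-optimal embedding $K\hookrightarrow B$ enter subtly into local intersection calculations at primes ramifying in $K/F$; tracking these contributions in a Galois-equivariant fashion, so that the identity is insensitive to the chosen CM point, is what ultimately pins down the constant on the right-hand side.
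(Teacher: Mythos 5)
The paper does not prove this statement: it is quoted verbatim from Zhang's work, cited as \cite[Theorem 6.1]{zh04}, with \cite{yzz13} and \cite{cashti14} given as references for a more detailed and general treatment. So there is no internal proof to compare against. That said, your sketch is a faithful high-level outline of the Gross--Zagier strategy that Zhang actually carries out: an analytic kernel built from Eisenstein series whose Petersson pairing with $f$ recovers $L(f_K, s)$, differentiation at $s=1$ using $\epsilon(f_K)=-1$, decomposition of $\langle a(f), a(f)\rangle_{NT}$ into local heights, archimedean matching via Green's functions, and non-archimedean matching via arithmetic intersection on integral models, with Cherednik--Drinfeld uniformisation at the primes dividing $\nm$ where $B$ ramifies. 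You have also correctly identified the genuine difficulties (the ramified local calculations and the role of the Hodge class $\xi$). Since the present paper treats this as a black-box analytic input, the appropriate move here is simply to cite \cite{zh04} (or \cite{yzz13}) rather than reprove it; a full proof along the lines you describe would occupy a paper on its own.
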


\begin{rem}
Let $\mathbb{T}^{B^\times/Z}_{\np} =\Op[T_v, v \nmid \n, U_v, v \mid \n]$ where $T_v$ (resp. $U_v$) is the characteristic function of the double coset $[\hat{R}^\times \varpi_v \hat{R}^\times]$, with $\pi_v \in B_v^\times\subset \hat{B}^\times$ an element of norm $N(v)$. Under assumption \ref{absred} the maximal ideal of $\mathbb{T}^{B^\times}_{\np}$ containing the kernel $I_{f_B}$ of the map $\mathbb{T}_{\np}^{B^\times/Z}\rightarrow \Op$ attached to $f_B$ is not Eisenstein, so the Hodge class dies modulo $I_{f_B}$. Hence inside $(CH^1(X_{\hat{R}^\times})(K))\otimes \Op)/I_{f_B}=(Jac(X_{\hat{R}^\times})(K)\otimes \Op)/I_{f_B}$ we have $a(f)=[Q_K]$.
\end{rem}

\section{Selmer groups}\label{selgps}
\subsection{}\label{fixf} Throughout this section we fix a Hilbert newform $f \in S(\n)$ and a $CM$ extension $K/F$ such that $\n$ is squarefree and all its prime factors are inert in $K$ (in the notation of the previous section, we are assuming that $\n=\nm$).
\subsection{Bloch-Kato Selmer groups} Let $V=V(f)$. Recall that this is a two-dimensional $E_\p$-vector space with a continuous $\Gamma_F$-action, inside which we have chosen self-dual $\Op$-lattice $T(f)$. We defined $A(f)=V(f)/T(f)$ and, for $n \geq 1$, we denote $A_n(f)=A(f)[\varpi^n]\simeq T_n(f)=T(f)/\varpi^n$.

The Bloch-Kato Selmer group of the $\Gamma_K$-module $V$ is defined as
\begin{equation*}
Sel(K, V)=ker\left(H^1(K, V)\rightarrow \prod_v\frac{H^1(K_v, V)}{H^1_f(K_v, V)} \right)
\end{equation*}
where, for a finite place $v$ of $K$,
\begin{equation*}
H^1_f(K_v, V)=ker\left(H^1(K_v, V)\rightarrow\begin{cases}
H^1(I_v, V) \text{ if } v \nmid p\\
H^1(K_v, V\otimes B_{cris}) \text{ if } v \mid p
\end{cases}\right).
\end{equation*}

We also define Selmer groups $Sel(K, M)$ for $M=T(f), T_n(f), A(f), A_n(f)$ imposing as local conditions $H^1_f(K_v, M)$ those coming from $H^1_f(K_v, V)$ by propagation. In particular under the local Tate pairing at a place $v$
\begin{equation*}
H^1(K_v, T_n(f))\times H^1(K_v, A_n(f))\rightarrow E_\p/\Op
\end{equation*}
the local conditions $H^1_f(K_v, T_n(f))$ and $H^1_f(K_v, A_n(f))$ are annihilators of each other, since the same is true for the Bloch-Kato local conditions on $V$.

\subsection{} Our aim is to determine the local conditions defining $Sel(K, A_n(f))$ more explicitly. Precisely, imposing suitable hypotheses on the Galois representation $T_1(f)$, we wish to describe these local conditions \emph{purely in terms of the Galois representation} $A_n(f)$ and of the level $\n$.

\subsection{Local condition at places outside $\n p$} If $v$ is a finite place of $K$ not dividing $\n p$ then $V(f)$ is unramified at $v$. Since the unramified local condition is stable under propagation on unramified $\Gamma_K$-modules it follows that the local condition on $H^1(K_v, M)$ for $M=A_n(f), T_n(f)$ is the unramified one, i.e.
\begin{equation*}
H^1_f(K_v, M)=ker\left((H^1(K_v, M)\rightarrow
H^1(I_v, M)\right).
\end{equation*}

\subsection{Local condition at places dividing $\n$} We want to express local conditions at places dividing $\n$ in terms of the Galois module $A_n(f)$. To do this we will need the following
\begin{ass}\label{nmass}
Assume that, if $\q|\n$ and $N(\q) \equiv \pm 1 \pmod p$, then $T_1(f)$ is ramified at $\q$.
\end{ass}

\begin{lem}(cf. \cite[Lemma 3.5]{pw11})\label{locnm}
Let $\q|\n$ and $n\geq 1$. Under the above assumption there exists a unique submodule $A_n^{\q}(f)$ of $A_n(f)$ free of rank one over $\Op/\varpi^n$ on which $\Gamma_{K_\q}$ acts via the cyclotomic character.
\end{lem}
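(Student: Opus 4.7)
My plan is to produce $A_n^{\q}(f)$ by intersecting the lattice $T(f)$ with the canonical Galois-stable line supplied by Lemma \ref{locm} over $F_\q$, and then to deduce uniqueness from a short cocycle calculation that invokes Assumption \ref{nmass} precisely when it is needed.

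\textbf{Existence.} Let $V^{\q}\subset V(f)$ be the $\Gamma_{F_\q}$-stable line on which $\Gamma_{F_\q}$ acts via $\mu\chi_{cyc}$ (Lemma \ref{locm}), and set $T^{\q}:=T(f)\cap V^{\q}$. Since $T(f)$ has rank two and $V(f)/V^{\q}$ is one-dimensional, $T^{\q}$ is an $\Op$-lattice of rank one, and $T(f)/T^{\q}$ embeds in $V(f)/V^{\q}$, so it is torsion-free, hence free of rank one over $\Op$. Because $\q$ is inert in $K$, the extension $K_\q/F_\q$ is unramified of degree two, and the quadratic unramified character $\mu$ becomes trivial upon restriction to $\Gamma_{K_\q}$; thus on $\Gamma_{K_\q}$ the sub $T^{\q}$ carries the $\chi_{cyc}$-action and the quotient is trivial. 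Freeness of $T(f)/T^{\q}$ makes reduction modulo $\varpi^n$ exact, so $A_n^{\q}(f):=T^{\q}/\varpi^n$ injects into $T(f)/\varpi^n\simeq A_n(f)$ as a free $\Op/\varpi^n$-submodule of rank one with the desired Galois action.

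\textbf{Uniqueness.} Suppose $M\subset A_n(f)$ is another such submodule. Choose an $\Op/\varpi^n$-basis $(e_1,e_2)$ of $A_n(f)$ with $e_1$ generating $A_n^{\q}(f)$ and $e_2$ lifting a generator of the (trivially acted-on) quotient, and define $\epsilon\colon\Gamma_{K_\q}\to\Op/\varpi^n$ by $g e_2 = e_2+\epsilon(g)e_1$; the class $[\epsilon]$ represents the extension $A_n(f)$ in $H^1(\Gamma_{K_\q},A_n^{\q}(f))$. Writing a generator $v=ae_1+be_2$ of $M$, the equation $gv=\chi_{cyc}(g)v$ unfolds into
\[ (\chi_{cyc}(g)-1)\,b \;=\; 0 \qquad\text{and}\qquad b\,\epsilon(g)\;=\;0 \qquad\text{for all } g\in\Gamma_{K_\q}. \]
I claim $b=0$ in each of the two exhaustive cases. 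If $N(\q)\not\equiv\pm 1\pmod p$ then $\chi_{cyc}(Fr_{K_\q})-1=N(\q)^2-1$ is a $\p$-adic unit, and the first relation already forces $b=0$. If instead $N(\q)\equiv\pm 1\pmod p$, Assumption \ref{nmass} applies: $T_1(f)$ is ramified at $\q$. Since $K_\q/F_\q$ is unramified, $I_{K_\q}=I_{F_\q}$, and since $\chi_{cyc}$ is also unramified at $\q$, any coboundary for $\Gamma_{K_\q}$ restricted to inertia vanishes; hence the extension class of $A_1(f)$ in $H^1(\Gamma_{K_\q},A_1^{\q}(f))$ remains non-trivial. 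Consequently $\bar\epsilon:=\epsilon\bmod\varpi$ is not a coboundary, in particular not identically zero, so $\epsilon(g_0)$ is a unit of $\Op/\varpi^n$ for some $g_0$, and the second relation forces $b=0$. In either case $v\in A_n^{\q}(f)$, and rank comparison gives $M=A_n^{\q}(f)$.

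The step I expect to require the most care is the second case, where I have to transfer the hypothesis ``$T_1(f)$ ramified at $\q$'', formulated over $F_\q$, into the non-vanishing of the extension class over $\Gamma_{K_\q}$ that the cocycle computation consumes; the inertial identity $I_{K_\q}=I_{F_\q}$ together with the unramifiedness of $\chi_{cyc}$ at $\q$ is exactly what makes this work.
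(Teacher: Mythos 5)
Your proof is correct and follows essentially the same case split as the paper's (the dichotomy $N(\q)\not\equiv\pm1\pmod p$ versus $N(\q)\equiv\pm1\pmod p$, where Assumption \ref{nmass} forces ramification). The main difference is presentational: the paper proves the $n=1$ case by examining $A_1(f)^{I_\q}$ and then says ``the statement for general $n$ can then be established by induction'' without further detail, whereas you construct $A_n^\q(f)$ explicitly for all $n$ at once by intersecting $T(f)$ with the Galois-stable line $V^\q$ and carry out the uniqueness check uniformly via the cocycle $\epsilon$; this makes the passage to general $n$ transparent rather than left to an unspelled-out induction. Your care in transferring the ramification hypothesis over $F_\q$ to non-vanishing of $\bar\epsilon$ on inertia over $K_\q$ (using $I_{K_\q}=I_{F_\q}$ and the fact that coboundaries for the extension vanish on inertia because $\chi_{cyc}$ is unramified at $\q$) is exactly the point one must not gloss over, and it is handled correctly.
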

\begin{proof}
For $n=1$ this follows from assumption \ref{nmass}. Indeed by Lemma \ref{locm} the $\Gamma_{K_\q}$-module $A_1(f)$ is of the form
\begin{equation*}
\begin{pmatrix}
\chi_{cyc} & *\\
0 & 1
\end{pmatrix}
\end{equation*}
If $A_1(f)^{I_\q}$ is one dimensional, which is always the case when $N(\q) \equiv \pm 1 \pmod p$, then it is the only subspace on which $\Gamma_{K_\q}$ acts via the cyclotomic character. Otherwise $N(\q)\not \equiv \pm 1 \pmod p$, so $Fr_\q$ acting on $A_1(f)^{I_\q}$ has distinct eigenvalues, hence the claim follows. The statement for general $n$ can then be established by induction.
\end{proof}

\begin{prop}\label{nmprop}
With the notations of the previous lemma, we have:
\begin{equation*}
H^1_f(K_\q, A_n(f))=Im(H^1(K_\q, A_n^\q(f))\rightarrow H^1(K_\q, A_n(f))).
\end{equation*}
\end{prop}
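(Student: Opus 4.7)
Write $V=V(f)$, $T=T(f)$, $A_n=A_n(f)$, $T_n=T_n(f)$, $A_n^\q=A_n^\q(f)$, and let $V^\q, T^\q$ be the submodules from the filtration.

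By Lemma~\ref{locm}, $V|_{\Gamma_{F_v}}\simeq\begin{pmatrix}\mu\chi_{cyc}&*\\0&\mu\end{pmatrix}$ with $\mu$ quadratic unramified; since $K_\q/F_v$ is the unramified quadratic extension of $F_v$ and $\mu^2=1$, $\mu|_{\Gamma_{K_\q}}$ is trivial, so $V|_{\Gamma_{K_\q}}$ fits in a still-ramified sequence $0\to V^\q\to V\to V/V^\q\to 0$ with $V^\q\simeq E_\p(1)$ and trivial quotient. Integrally $T^\q\simeq\Op(1)$, $T/T^\q\simeq\Op$ (trivial), and similarly $A_n^\q\simeq\Op/\varpi^n(1)$, $A_n/A_n^\q\simeq\Op/\varpi^n$. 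One then checks $V^{\Gamma_{K_\q}}=0$ (the cyclotomic line has $Fr_\q=N(\q)^{-1}\neq 1$, and lifting a fixed vector from the quotient would split the ramified extension), hence $T^{\Gamma_{K_\q}}=H^0(K_\q,T)=0$, and $H^1_f(K_\q,V)=V^{I_\q}/(Fr_\q-1)=V^\q/(N(\q)^{-1}-1)V^\q=0$.

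Next I reduce to the dual statement on $T_n$. The perfect pairing $T_n\times A_n\to\Op/\varpi^n(1)$ coming from the self-duality of $T$ preserves the filtration (identifying $T_n^\q\simeq A_n^\q$), and standard local duality yields
\begin{equation*}
\mathrm{Im}(H^1(K_\q,A_n^\q)\to H^1(K_\q,A_n))^\perp=\mathrm{Im}(H^1(K_\q,T_n^\q)\to H^1(K_\q,T_n)),
\end{equation*}
while Bloch-Kato duality gives $H^1_f(K_\q,A_n)^\perp=H^1_f(K_\q,T_n)$. Thus the proposition is equivalent to $\mathrm{Im}(H^1(K_\q,T_n^\q)\to H^1(K_\q,T_n))=H^1_f(K_\q,T_n)$.

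To prove this, unwind propagation: $H^1_f(K_\q,T_n)$ is the image of $H^1_f(K_\q,T)=H^1(K_\q,T)_{\mathrm{tor}}$ in $H^1(K_\q,T_n)$. By local Tate duality and $T\simeq T^*(1)$, $H^1(K_\q,T)$ is Pontryagin self-dual, hence finite (the only finitely generated $\Op$-modules that are self-Pontryagin-dual are finite), so $H^1_f(K_\q,T)=H^1(K_\q,T)$. Moreover $H^1(K_\q,T/T^\q)=H^1(K_\q,\Op)=\Op$ is torsion-free, so by the LES of $0\to T^\q\to T\to T/T^\q\to 0$ the cokernel of $H^1(K_\q,T^\q)\to H^1(K_\q,T)$ (a finite group embedding in $\Op$) vanishes, giving $H^1(K_\q,T^\q)\twoheadrightarrow H^1(K_\q,T)$. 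Hence $H^1_f(K_\q,T_n)$ is the image of $H^1(K_\q,T^\q)$ in $H^1(K_\q,T_n)$, which factors through $H^1(K_\q,T_n^\q)$; this yields one inclusion. For the other, consider the commutative square of $\varpi^n$-connecting maps
\begin{equation*}
\begin{tikzcd}
H^1(K_\q,T_n^\q)\arrow[r,"\delta_{T^\q}"]\arrow[d,"\alpha"]&H^2(K_\q,T^\q)\arrow[d,"\beta"]\\
H^1(K_\q,T_n)\arrow[r,"\delta_T"]&H^2(K_\q,T).
\end{tikzcd}
\end{equation*}
Here $\beta=0$ because $H^2(K_\q,T)=H^0(K_\q,T^*(1))^\vee=H^0(K_\q,T)^\vee=0$. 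So for any $c\in H^1(K_\q,T_n^\q)$ we have $\delta_T(\alpha(c))=\beta(\delta_{T^\q}(c))=0$, whence $\alpha(c)$ lifts to $H^1(K_\q,T)$, i.e.\ lies in $H^1_f(K_\q,T_n)$.

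\textbf{Main obstacle.} The argument turns on three structural facts: finiteness of $H^1(K_\q,T)$ via Pontryagin self-duality, the vanishings $H^0(K_\q,T)=H^2(K_\q,T)=0$ (local duality plus the explicit description of $V|_{\Gamma_{K_\q}}$), and the torsion-freeness of $H^1(K_\q,T/T^\q)=\Op$. Assumption~\ref{nmass} is essential in that it makes Lemma~\ref{locnm} apply, so that $A_n^\q$ (and dually $T_n^\q$) is canonically singled out; the delicate case is $N(\q)\equiv 1\pmod p$, where the ramification of $\bar\rho$ prevents the filtration from degenerating.
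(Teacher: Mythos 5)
Your overall strategy is the same as the paper's: after reducing to the statement on $T_n(f)$, establish that $H^1(K_\q,T)$ is finite and that $H^1(K_\q,T^\q)\twoheadrightarrow H^1(K_\q,T)$, and then compare with the reduction modulo $\varpi^n$. The first inclusion you obtain is fine, and the reduction to $T_n$ via orthogonality of local conditions is a legitimate alternative to the paper's shortcut of using the $\Op[\Gamma]$-isomorphism $T_n(f)\simeq A_n(f)$.

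However, there is a genuine error in the second inclusion. You invoke ``$H^2(K_\q,T)=H^0(K_\q,T^*(1))^\vee=H^0(K_\q,T)^\vee=0$''. This is a misstatement of local Tate duality: for a free $\Op$-module $T$ the duality pairs the compact module $T$ with its Cartier dual $T^D=\mathrm{Hom}(T,E_\p/\Op(1))$, not with $T^*(1)=\mathrm{Hom}(T,\Op(1))$, so the correct identity is $H^2(K_\q,T)\simeq H^0(K_\q,T^D)^\vee\simeq H^0(K_\q,A(f))^\vee$, using $T^D\simeq A(f)$ from self-duality. And $H^0(K_\q,A(f))$ need not vanish. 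Indeed if $\bar\rho$ is unramified at $\q$ over $K_\q$ and $N(\q)\not\equiv\pm1\pmod p$ (a situation Assumption~\ref{nmass} allows), then $A_1(f)$ has a nonzero $Fr_{K_\q}$-fixed line lying \emph{outside} $A_1^\q(f)$, so $H^0(K_\q,A(f))\neq 0$ and, moreover, the map $H^0(K_\q,A(f))\to H^0(K_\q,A(f)/A^\q(f))$ is nonzero; dualising, the map $\beta:H^2(K_\q,T^\q)\to H^2(K_\q,T)$ is \emph{not} zero. (The same misidentification undercuts your justification of finiteness of $H^1(K_\q,T)$ by ``Pontryagin self-duality''; the correct finiteness argument is that $H^1(K_\q,V)=0$, which follows from $V^{\Gamma_{K_\q}}=0$ together with local Euler--Poincar\'e and Tate duality.)

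The fix is simple and is what the paper actually does: it is not $\beta$ that vanishes but the \emph{other} connecting map in your square. Since $H^2(K_\q,T^\q)\simeq H^2(K_\q,\Op(1))\simeq\Op$ is torsion-free, the coboundary $\delta_{T^\q}:H^1(K_\q,T_n^\q)\to H^2(K_\q,T^\q)$ (whose image is $H^2(K_\q,T^\q)[\varpi^n]$) is zero, i.e.\ $H^1(K_\q,T^\q)\twoheadrightarrow H^1(K_\q,T_n^\q)$. Then $\delta_T\circ\alpha=\beta\circ\delta_{T^\q}=0$ regardless of $\beta$, and your conclusion goes through. With this replacement your argument matches the paper's, modulo routing the final step through the $A_n$-side by duality rather than by the isomorphism $T_n(f)\simeq A_n(f)$.
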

\begin{proof}
First of all, we have $H^1(K_\q, V(f))=0$, $H^1_f(K_\q, A(f))=0$ and $H^1(K_\q, T(f))=H^1_f(K_\q, T(f))$ is finite (cf. \cite[Proposition 2.7.8]{nek12}), hence $H^1_f(K_\q, T_n(f))=Im(H^1(K_\q, T(f))\rightarrow H^1(K_\q, T_n(f)))$. Set $T=T(f)_{|\Gamma_{K_\q}}$. Then we have an exact sequence of $\Op[\Gamma_{K_\q}]-$modules
\begin{equation*}
0\rightarrow T^+\rightarrow T \rightarrow T^-\rightarrow 0
\end{equation*}
where $T^+=\Op(1)$ and $T^-=\Op$. The induced long exact sequence in cohomology yields
\begin{equation*}
0\rightarrow H^0(K_\q, T^-)\rightarrow H^1(K_\q, T^+)\rightarrow H^1(K_\q, T) \rightarrow H^1(K_\q, T^-)\rightarrow H^2(K_\q, T^+);
\end{equation*}
moreover we have $H^0(K_\q, T^-)\simeq H^2(K_\q, T^+)\simeq \Op$, $H^1(K_\q, T^+)=K_\q^\times \hat\otimes \Op$ and $H^1(K_\q, T^-)=Hom_{cont}(\Gamma_{K_\q}^{ab}, \Op)=Hom_{cont}(\widehat{{K_\q}^\times}, \Op)$. As $H^1(K_\q, T)$ is finite and $H^1(K_\q, T^-)$ is infinite the second coboundary map is non zero, hence injective. It follows that the map $T^+\rightarrow T$ induces a surjection $H^1(K_\q, T^+)\twoheadrightarrow H^1(K_\q, T)$; furthermore the map $H^1(K_\q, T^+)=K_\q^\times \hat\otimes \Op\rightarrow H^1(K_\q, T^+/\mathfrak{p}^n)=K_\q^\times \otimes \Op/\mathfrak{p}^n$ is surjective. Hence we obtain $H^1_f(K_\q, T_n(f))=Im(H^1(K_\q, T^+/\mathfrak{p}^n)\rightarrow H^1(K_\q, T_n(f)))$, from which the proposition follows.
\end{proof}

\subsection{Local conditions at places above $p$} Fix a place $v$ of $K$ lying above $p$. Then the Bloch-Kato local condition at $v$ can be described in terms of flat cohomology of $p$-divisible groups. This is discussed in detail in appendix A of \cite{nek12}; the key facts that we need are summarised in the following

\begin{prop}\label{condlocpflat}
\begin{enumerate}
\item Let $v$ be a place of $F$ above $p$. Then there exists a $p$-divisible group $\mathcal{G}/\mathcal{O}_{F_v}$ with an action of $\Op$ such that $T_p(\mathcal{G})=T(f)_{|\Gamma_{F_v}}$.
\item For $\mathcal{G}$ as in $(1)$ and $n \geq 1$ we have
\begin{equation*}
H^1_f(K_w, T_n(f))=H^1_{fl}(\mathcal{O}_{K_w}, \mathcal{G}[\varpi^n])
\end{equation*}
where $w$ is a place of $K$ above $v$.
\item Assume that $p$ is unramified in $K$. Let $w$ be a place of $K$ above $p$ and $\mathcal{H}/\mathcal{O}_{K_w}$ a finite flat group scheme with $\Op/(\varpi^n)$-action such that $T_n(f)\simeq \mathcal{H}(\bar{K}_w)$ as $\Op/\varpi^n[\Gamma_{K_w}]$-modules. Then $H^1_f(K_w, T_n(f))=H^1_{fl}(\mathcal{O}_w, \mathcal{H})$.
\end{enumerate}
\end{prop}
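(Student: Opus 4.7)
My plan is to treat the three parts separately, since each corresponds to a known result in integral $p$-adic Hodge theory; detailed proofs can be found in Appendix A of \cite{nek12}.

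For (1), I would exploit the fact that, because $f$ has parallel weight two and level $\n$ coprime to $p$, the restriction of $V(f)$ to $\Gamma_{F_v}$ is crystalline with Hodge-Tate weights in $\{0,1\}$ at every embedding $F_v \hookrightarrow \bar{\Q}_p$. Under the assumption that $p$ is unramified in $F$, Fontaine-Laffaille theory (equivalently, the classification of $p$-divisible groups over $\mathcal{O}_{F_v}$ via filtered Dieudonn\'e modules with Hodge filtration concentrated in degrees $0$ and $1$) then produces a $p$-divisible group $\mathcal{G}/\mathcal{O}_{F_v}$ with $T_p(\mathcal{G}) \simeq T(f)_{|\Gamma_{F_v}}$; the $\Op$-action on $T(f)$ lifts to $\mathcal{G}$ by functoriality. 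Alternatively, one can build $\mathcal{G}$ geometrically as the $T(f)$-isotypical component of the $p$-divisible group of the Jacobian of a suitable Shimura curve obtained via a Jacquet-Langlands transfer of $f$.

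For (2), the content is Bloch-Kato's original description of the Selmer local condition at a place above $p$: for a $p$-divisible group $\mathcal{G}/\mathcal{O}_{K_w}$ with $T_p(\mathcal{G}) = T(f)_{|\Gamma_{K_w}}$ one has
\[
H^1_f(K_w, V(f)) \;=\; \operatorname{Im}\bigl(H^1_{fl}(\mathcal{O}_{K_w}, \mathcal{G}) \otimes_{\Op} E_\p \;\to\; H^1(K_w, V(f))\bigr),
\]
as in \cite[Example 3.11]{bk90}. The local condition on $T_n(f)$ is then obtained by propagation, and the flat Kummer sequence $0 \to \mathcal{G}[\varpi^n] \to \mathcal{G} \xrightarrow{\varpi^n} \mathcal{G} \to 0$ (together with the identification $A_n(f) \simeq T_n(f)$ coming from the self-dual pairing) identifies this with the image of $H^1_{fl}(\mathcal{O}_{K_w}, \mathcal{G}[\varpi^n])$ in $H^1(K_w, T_n(f))$.

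For (3), the point is uniqueness of a finite flat prolongation of a given Galois module. Because $p$ is unramified in $K$, the absolute ramification index of $K_w$ equals one, and since $p > 3$ this is strictly less than $p - 1$. Raynaud's theorem then implies that the generic fiber functor from finite flat commutative group schemes over $\mathcal{O}_{K_w}$ killed by $p$ to $\Gamma_{K_w}$-modules is fully faithful; by d\'evissage through $\varpi$-torsion subgroups this extends to group schemes killed by any power of $\varpi$. Hence any finite flat model $\mathcal{H}$ of $T_n(f)$ is canonically isomorphic to $\mathcal{G}[\varpi^n]$, and (3) follows from (2). The main technical ingredient is really (1)---producing the $p$-divisible group from the crystalline representation, for which the hypothesis that $p$ is unramified in $F$ is essential---whereas (2) and (3) are then formal consequences.
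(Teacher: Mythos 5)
Your proof is essentially correct and follows the same overall strategy as the paper: reduce to known results in integral $p$-adic Hodge theory, using Raynaud's full faithfulness (in the range $e<p-1$) for the uniqueness of flat models in part (3). For part (1) the paper simply cites \cite[Theorem 1.6]{tay95}, which constructs $\mathcal{G}$ directly from the geometry of Shimura abelian varieties; your primary route via Fontaine--Laffaille is a legitimate alternative but implicitly takes as input that $V(f)|_{\Gamma_{F_v}}$ is crystalline with Hodge--Tate weights in $\{0,1\}$, a fact which in Taylor's approach is an \emph{output} of the same geometric construction rather than a premise (your parenthetical ``geometric alternative'' via Jacquet--Langlands and Shimura curves is actually the paper's main route). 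For part (2) the paper points to \cite[A.2.6]{nek12} rather than invoking \cite[Example 3.11]{bk90} directly; the technical content there beyond the Bloch--Kato description at the $E_\p$-level is the identification at each finite level $\varpi^n$, which rests on the vanishing of $H^2_{fl}(\mathcal{O}_{K_w},\mathcal{G}[\varpi^k])$ and local flat duality --- your appeal to the flat Kummer sequence and propagation is the right idea but leaves these two points implicit, and in particular the injectivity of $H^1_{fl}(\mathcal{O}_{K_w},\mathcal{G}[\varpi^n])\to H^1(K_w,T_n(f))$ needed to speak of an equality rather than just a map should be flagged.
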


\begin{proof}
The existence of $\mathcal{G}$ such that $T_p(\mathcal{G})=T(f)_{|G_{F_v}}$ is proved in \cite[Theorem 1.6]{tay95}.

The second point follows from \cite[A.2.6]{nek12}, which is a consequence of the fact that $T_p(\mathcal{G})\otimes \Q_p$ is a crystalline $\Gamma_{F_v}$-representation, of local flat duality and of the vanishing of the second cohomology groups $H^2_{fl}(\mathcal{O}_{K_w}, \mathcal{G}[\varpi^k]), k \geq 1$. 

Finally let us prove the third statement. We have isomorphisms $\mathcal{G}[\varpi^n](\bar{K}_w) \simeq T_n(f)\simeq \mathcal{H}(\bar{K}_w)$. Since $p$ is odd and unramified in $K$, by \cite[Corollary 3.3.6]{ra74} the isomorphism $\mathcal{G}[\varpi^n](\bar{K}_w)\simeq \mathcal{H}(\bar{K}_w)$ comes from an isomorphism $\mathcal{G}[\varpi^n]\simeq \mathcal{H}$, under which $H^1_{fl}(\mathcal{O}_w, \mathcal{G}[\varpi^n])$ and $H^1_{fl}(\mathcal{O}_w, \mathcal{H})$ are identified in $H^1(K_w, T_n(f))$.
\end{proof}

\subsection{} The outcome of this section is that we have described the local conditions giving $Sel(K, T_n(f))\simeq Sel(K, A_n(f))$ purely in terms of the $\Gamma_K$-module $A_n(f)$ and of the level $\n$ of $f$; this will be important for us as in what follows we will realise $T_n(f)$ as a quotient of the Tate module of the Jacobian of several Shimura curves.

\section{Statement of the main theorem, and a first d\'evissage}

\subsection{} Fix $f \in S(\n)$ as in \ref{fixf}. Until further notice, we are now going to work in the \emph{definite case}, i.e. we suppose that $[F: \Q]\equiv \# \{\q, \q \mid \n\} \pmod 2$. Then the sign of the functional equation of $L(f_K, 1)$ is 1, and Zhang's special value formula \eqref{specval} implies that, with the notations as in \ref{fixtransf}, $v_\p(L^{alg}(f_K, 1))=2v_\p(a(f))$. In this setting, our aim is to prove the following result

\begin{teo}\label{thbkdef}
Let $f \in S(\n)$. Assume that
\begin{enumerate}
\item The level $\n$ of $f$, the discriminant $disc(K/F)$ and the prime $p$ below $\p$ are coprime to each other. Moreover $p>3$ is unramified in $F$, and $\n$ is squarefree and all its factors are inert in $K$.
\item The image of the residual Galois representation $\bar{\rho}: \Gamma_F\rightarrow Aut(T_1(f))$ attached to $f$ contains $SL_2(\mathbf{F}_p)$.
\item For every prime $\q\mid \n$ we have $N(\q) \not \equiv -1 \pmod p$. Moreover if $N(\q) \equiv 1 \pmod p$ then $\bar{\rho}$ is ramified at $\q$.
\item $L(f_K, 1)\neq 0$.
\end{enumerate}
Then $Sel(K, A(f))$ is finite and the following inequality holds:
\begin{equation*}
length_{\Op} Sel(K, A(f))\leq v_\p(L^{alg}(f_K, 1)).
\end{equation*}
Moreover the inequality is an equality if the implication stated at the end of Theorem \ref{mainthm} holds true.
\end{teo}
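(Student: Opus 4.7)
The plan is to deduce the theorem from a bound for the finite Selmer groups $Sel(K, T_n(f))$, uniform in $n$, and establish the latter by induction on the invariant $t(f) := v_\p(L^{alg}(f_K, 1)) = 2v_\p(a(f))$. The reduction runs as follows: under Assumption \ref{absred}, which follows from hypothesis (2), the $\Op$-module $Sel(K, A(f))$ is cofinitely generated and the local conditions described in Section \ref{selgps} (via Lemma \ref{locnm} and Proposition \ref{condlocpflat}) are stable under passage to $T_n(f)$; combining this with the short exact sequence $0 \to T_n(f) \to T(f) \xrightarrow{\varpi^n} T(f) \to 0$ produces a natural map $Sel(K, T_n(f)) \to Sel(K, A(f))[\varpi^n]$, and a bound $length_{\Op} Sel(K, T_n(f)) \leq C$ uniform in $n$ forces $Sel(K, A(f))$ to be finite of length at most $C$.

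\textbf{Base case} ($t(f) = 0$). Here $a(f) \in \Op^\times$. For every $n$-admissible prime $\pl$ of $F$ -- inert in $K$, coprime to $\n p\,disc(K/F)$, with $N(\pl) \not\equiv \pm 1 \pmod{\p^n}$, and on which $\bar\rho(Fr_\pl)$ has the eigenvalue pattern permitting level raising -- Section 6 furnishes a cohomology class $c(\pl) \in H^1(K, T_n(f))$ which is unramified at every finite place $v \neq \pl$, meets the prescribed Selmer condition at all places in $\n$ and above $p$, and whose singular localisation $\partial_\pl c(\pl)$ is, by the first reciprocity law, a unit multiple of $a(f) \bmod \p^n$, hence a unit. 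For any $s \in Sel(K, T_n(f))$, global duality gives $\sum_v \langle loc_v(s), loc_v(c(\pl))\rangle_v = 0$; all terms with $v \neq \pl$ vanish by orthogonality of the respective Selmer conditions under the local Tate pairing, so $\langle loc_\pl(s)^{\mathrm{fin}}, \partial_\pl c(\pl)\rangle_\pl = 0$, forcing $loc_\pl(s)^{\mathrm{fin}} = 0$. A Chebotarev argument on the extension of $K$ cut out jointly by $T_n(f)$ and $s$ -- for which the hypothesis $SL_2(\mathbf{F}_p) \subset \mathrm{Im}(\bar\rho)$ from (2) produces a positive density of admissible primes detecting $s$ -- forces $s = 0$, so $Sel(K, T_n(f)) = 0$ for every $n$.

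\textbf{Inductive step} ($t(f) \geq 1$). Assume the theorem for every newform with strictly smaller $t$-invariant which still satisfies (1)--(3). If $Sel(K, T_n(f)) = 0$ there is nothing to do; otherwise take $s$ of maximal order in $Sel(K, T_n(f))$ and an admissible prime $\pl$ detecting $s$. The class $c(\pl)$ is still available and its singular part has $\p$-adic valuation $v_\p(a(f)) < n$. Choose a second admissible prime $\pl'$ maximising the order of $loc_{\pl'}(c(\pl))^{\mathrm{fin}}$; the second reciprocity law of Section 6 produces a newform $g \in S(\n\pl\pl')$, congruent to $f$ modulo $\p^n$ on the corresponding definite quaternion algebra, with $a(g) \equiv loc_{\pl'}(c(\pl))^{\mathrm{fin}} \pmod{\p^n}$. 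Combining the two reciprocity laws at $(\pl, \pl')$ with global duality, one obtains $v_\p(a(g)) < v_\p(a(f))$, so $t(g) \leq t(f) - 2$, while a double application of Poitou--Tate at $\pl$ and $\pl'$ (converting the new Selmer condition at these primes into the old one and tracking the codimensions) yields $length_{\Op} Sel(K, T_n(g)) \leq length_{\Op} Sel(K, T_n(f)) - 2$; in particular $t(g) - length_{\Op} Sel(K, T_n(g)) \geq t(f) - length_{\Op} Sel(K, T_n(f))$. The induction hypothesis applied to $g$ then gives the bound for $f$. The final assertion about equality follows because the hypothesised implication -- namely that $Sel_{(\mathfrak{D}_h/\n)}(K, T_1(f)) = 0$ implies $a(h) \in \Op^\times$ -- rules out the degenerate branch of the induction in which an intermediate form has vanishing mod-$\p$ Selmer group without unit special value, collapsing all inequalities above to equalities.

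\textbf{Main obstacle.} The crux lies in the simultaneous existence of $n$-admissible primes with the two required properties: amenability to level raising modulo $\p^n$, which rests on Ihara's lemma and a mod-$\p$ multiplicity one statement for definite quaternionic forms (and is precisely why hypotheses (2) and (3) are imposed), and simultaneous detection via Chebotarev of a prescribed class on the composite extension cutting out $T_n(f)$ and $s$. Constructing the classes $c(\pl)$ on the appropriate indefinite Shimura curves and proving the two reciprocity laws that tie $\partial_\pl c(\pl)$ and $loc_{\pl'}(c(\pl))^{\mathrm{fin}}$ to quaternionic special-value data is the technical heart of the paper; once these inputs are granted, the induction itself is essentially careful bookkeeping with Poitou--Tate global duality.
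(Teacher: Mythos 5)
Your proposal correctly identifies the high-level skeleton of the paper's proof: reduce to a bound on finite Selmer groups, then induct on $t(f)$ using level raising at pairs of admissible primes, the two reciprocity laws, and Poitou--Tate duality. However there are several genuine gaps in the inductive step that keep this from being a complete proof.

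\textbf{Choice of admissible primes.} You take $\pl$ to detect a maximal-order Selmer class and then $\pl'$ to ``maximise the order'' of $loc_{\pl'}(c(\pl))^{\mathrm{fin}}$. The paper instead chooses $\pl_1$ to \emph{minimise} $t(g_k,\pl)$ over all $n$-admissible primes (Lemma~\ref{keylem}). This minimality is not cosmetic: combined with the choice of $\pl_2$ detecting $\kappa(\pl_1)$, it forces the crucial symmetry $ord_\varpi(loc_{\pl_1}\kappa(\pl_2))=0=ord_\varpi(loc_{\pl_2}\kappa(\pl_1))$, which via global duality gives the exchange $Sel_{(\mathfrak{D}_g\pl_1\pl_2/\n)}(K,A_k(f))=Sel_{(\mathfrak{D}_g/\n)\pl_1\pl_2}(K,A_k(f))$ (property (4) of Lemma~\ref{keylem}). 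This identity is what allows the clean comparison of Selmer lengths in equation~\eqref{keyeq}. With your choice of $\pl$ detecting a maximal class, this exchange is not obviously available, and the claimed drop of the Selmer length by at least $2$ has no justification.

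\textbf{The $m_1,m_2$ terms and the size of the drop.} You assert $t(g)\leq t(f)-2$ and $length_{\Op}Sel(K,T_n(g))\leq length_{\Op}Sel(K,T_n(f))-2$. The paper proves something weaker and more precise: the identity
\begin{equation*}
l_{\Op}Sel_{(\mathfrak{D}_g/\n)}(K, A_k(f))-2t(g_k)=l_{\Op}Sel_{(\mathfrak{D}_g\pl_1\pl_2/\n)}(K, A_k(f))-2t(h_k)-m_1-m_2
\end{equation*}
with correction terms $m_1,m_2\geq 0$. The inequality follows because $m_1,m_2\geq 0$ and $t(h_k)<t(g_k)$; the asserted drop by exactly $2$ is simply not what happens. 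More importantly, your equality argument needs $m_1=m_2=0$, which the paper establishes in §7.5--§7.6 by a careful localisation analysis exploiting exactly the symmetry $ord_\varpi(loc_{\pl_i}\kappa(\pl_j))=0$ of Lemma~\ref{keylem}. You do not have this symmetry in your setup and do not address $m_1,m_2$ at all.

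\textbf{Freeness and the $n=2k$ device.} The valuation $ord_\varpi(c(\pl))$ used throughout the induction is only well defined (and well behaved) because the reduction mod $\varpi^k$ of $c(\pl)$ lies in a free rank-one $\Op/\varpi^k$-submodule of $Sel_{(\mathfrak{D}_g\pl/\n)}(K,T_k(f))$. This is Proposition~\ref{freesys}, which is proved by going up to the mod-$\varpi^{2k}$ level and using the structure of Selmer groups over $\Op/\varpi^n$ (Corollary~\ref{changepar}). This is the whole reason the paper states Theorem~\ref{mainthm} with $n=2k$ and derives conclusions mod $\varpi^k$ (see Remark~\ref{freeness}). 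Your proof never confronts this: the quantities $ord_\varpi(\partial_\pl c(\pl))$, $ord_\varpi(loc_{\pl'}c(\pl))$ cannot be compared meaningfully in an arbitrary finite $\Op/\varpi^n$-module. This is a real gap, not a matter of exposition.

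A smaller point: the level-raised form produced by the second reciprocity law is an admissible automorphic form $h\in S^{B'^\times}(\Op/\varpi^n)$ on a definite quaternion algebra, not a priori the reduction of a newform $g\in S(\n\pl\pl')$; the paper's Theorem~\ref{mainthm} is deliberately phrased in terms of such mod-$\varpi^n$ forms to avoid having to lift (except in the discussion of Remark~\ref{reverseimp}). Also the sequence you use in the control argument should be $0\to A(f)[\varpi^n]\to A(f)\xrightarrow{\varpi^n}A(f)\to 0$ (Proposition~\ref{cnt}), not $0\to T_n(f)\to T(f)\xrightarrow{\varpi^n}T(f)\to 0$, which is not exact.
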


\begin{notat}
For $a \in \Op\setminus \{0\}$ we denote by $ord_\varpi(a)$ its $\varpi$-adic valuation. More generally, if $M$ is an $\Op$-module of finite type and $m \in M\setminus \{0\}$ we let $ord_{\varpi}(m)=sup\{n\geq 0: m \in \varpi^nM\}$. The length of an $\Op$-module $M$ will be denoted by $\lOp(M)$. Hence $ord_{\varpi}(a)=\lOp(\Op/(a))$ for $a \in \Op\setminus \{0\}$, and the cardinality of a finite $\Op$-module $M$ equals $(\#\Op/(\varpi))^{\lOp(M)}$.
\end{notat}

\subsection{} With the above notation, our aim is to prove the inequality

\begin{equation*}
\lOp Sel(K, A(f))\leq ord_{\varpi}(L^{alg}(f_K, 1))
\end{equation*}

under the assumption that the right hand side is not equal to infinity. In other words we have to prove that

\begin{equation}\label{maineq}
\lOp Sel(K, A(f))\leq 2ord_{\varpi}(a(f)).
\end{equation}

\begin{rem}\label{bkcomp}
Bloch and Kato \cite{bk90} predict that in our situation the following formula holds:
\begin{equation*}
ord_\varpi\left(\frac{L(f_K, 1)}{\Omega^{BK}}\right)=\lOp Sel(K, A(f))+ ord_\varpi\left(\prod_\q t_\q\right)
\end{equation*}
where $t_\q$ is the $\q$-th Tamagawa number of $A(f)$ and $\Omega^{BK}$ is a suitable period. In our sought-for formula \eqref{maineq} Tamagawa number are missing. The point is that the period $\Omega^{Gr}$ in Zhang's special value formula is different from the one showing up in the Bloch-Kato conjecture. To show that our formula \eqref{maineq} -better, the corresponding equality - is equivalent to the one predicted by Bloch and Kato one needs to compare the quantities $\Omega^{Gr}$ and $\Omega^{BK}$. This is done in \cite[Theorem 6.8]{pw11} for modular forms over $\Q$; there the ratio between the two periods is shown to be equal precisely to the product of the missing Tamagawa numbers. We do not know whether the analogous result over totally real fields has been proved, and we did not address this issue.
\end{rem}

Let us show first of all that it is enough to prove a mod-$\varpi^n$ version of the inequality \eqref{maineq}.
\begin{lem}\label{enmodp}
Assume that $L(f_K, 1)\neq 0$ and that the inequality
\begin{equation}\label{modneq}
\lOp Sel(K, A_n(f)) \leq 2 ord_\varpi(a_n(f))
\end{equation}
holds true for infinitely many $n$, where $a_n(f)\in \Op/\varpi^n$ denotes the reduction of $a(f)$. Then
\begin{equation*}
\lOp Sel(K, A(f)) \leq 2ord_{\varpi}(a(f)).
\end{equation*}
Moreover equality in the last equation holds if it does in \eqref{modneq} for infinitely many $n$.
\end{lem}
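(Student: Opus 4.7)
The plan is to pass from the mod-$\varpi^n$ bound to the integral one by a standard limit argument, whose main ingredient is a control-type identification between $Sel(K, A_n(f))$ and the $\varpi^n$-torsion of $Sel(K, A(f))$. I would proceed in three steps.

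First I would dispose of the right-hand side. Since $L(f_K, 1) \neq 0$, Zhang's formula \eqref{lalg} forces $a(f) \neq 0$, so $v := ord_\varpi(a(f)) < \infty$. For every $n > v$ the reduction $a_n(f) \in \Op/\varpi^n$ has $\varpi$-adic valuation exactly $v$, and hence the hypothesis \eqref{modneq} provides $\lOp Sel(K, A_n(f)) \leq 2v$ for infinitely many $n$.

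Second, and most substantively, I would establish the canonical identification
\begin{equation*}
Sel(K, A_n(f)) \xrightarrow{\sim} Sel(K, A(f))[\varpi^n].
\end{equation*}
Assumption (2) of Theorem \ref{thbkdef} gives $SL_2(\mathbf{F}_p) \subseteq Im(\bar{\rho})$. Since $SL_2(\mathbf{F}_p)$ is perfect for $p \geq 5$, it has no subgroup of index $2$, so the restriction $\bar{\rho}_{|\Gamma_K}$ is still absolutely irreducible; in particular $A(f)^{\Gamma_K} = 0$. The long exact sequence attached to $0 \to A_n(f) \to A(f) \xrightarrow{\varpi^n} A(f) \to 0$ then yields a natural isomorphism $H^1(K, A_n(f)) \xrightarrow{\sim} H^1(K, A(f))[\varpi^n]$, and analogous local injections $H^1(K_v, A_n(f)) \hookrightarrow H^1(K_v, A(f))$ at every place of $K$. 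Combining these with the fact that the local conditions defining both Selmer groups are obtained by propagation from $H^1_f(K_v, V(f))$ gives the identification above; the check is place-by-place using the explicit descriptions of $H^1_f(K_v, A_n(f))$ collected in Section \ref{selgps}.

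Third, recall that $Sel(K, A(f))$ is a cofinitely generated $\Op$-module (its Pontryagin dual is finitely generated over $\Op$ by Poitou--Tate). The bound $\lOp Sel(K, A(f))[\varpi^n] = \lOp Sel(K, A_n(f)) \leq 2v$ then holds for infinitely many $n$; boundedness of $\varpi^n$-torsion along an unbounded sequence of $n$'s forces $Sel(K, A(f))$ itself to be finite, and for $n$ large enough $Sel(K, A(f))[\varpi^n] = Sel(K, A(f))$, so $\lOp Sel(K, A(f)) \leq 2v = 2 ord_\varpi(a(f))$. The equality statement follows by the same choice of $n$: if \eqref{modneq} is an equality for infinitely many $n$, picking such an $n$ with $n > v$ and $Sel(K, A(f))[\varpi^n] = Sel(K, A(f))$ immediately gives $\lOp Sel(K, A(f)) = 2 ord_\varpi(a(f))$.

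The only genuinely delicate point in this plan is the Selmer identification in step two: at primes dividing $\n$ and above $p$ one must check that the propagated local conditions are compatible with the cohomological injection $H^1(K_v, A_n(f)) \hookrightarrow H^1(K_v, A(f))$. This is bookkeeping rather than a conceptual obstacle, since the relevant local conditions have already been made explicit in Section \ref{selgps}; everything else in the argument is formal.
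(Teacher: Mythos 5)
Your proof is correct and follows essentially the same route as the paper: reduce everything to the control isomorphism $Sel(K, A_n(f)) \simeq Sel(K, A(f))[\varpi^n]$ (which the paper isolates as Proposition \ref{cnt} and proves by the same snake-lemma argument, using the vanishing of $H^0(K, A(f))$ and the propagated local conditions), then use finiteness of $ord_\varpi(a(f))$ to stabilise the torsion for large $n$. The only cosmetic difference is that you prove the control statement inline rather than quoting it as a separate proposition.
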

\begin{proof}
If $L(f_K, 1)\neq 0$ then $a(f)\neq 0$, hence $a(f)\not \equiv 0 \pmod {\varpi^n}$ for $n$ large enough. For any such $n$ we have $ord_\varpi a(f)=ord_\varpi a_n(f)$. By hypothesis we have the inequality $\lOp Sel(K, A_n(f))\leq 2ord_\varpi a_n(f)$ for infinitely many $n$. Now $A_n(f)=A(f)[\varpi^n]$, and by the next control result (Proposition \ref{cnt}) we have the equality 
\begin{equation*}
Sel(K, A(f)[\varpi^n])=Sel(K, A(f))[\varpi^n].
\end{equation*}
Hence we obtain, for infinitely many $n$:
\begin{equation*}
\lOp Sel(K, A(f))[\varpi^n]=\lOp Sel(K, A_n(f))\leq 2ord_\varpi (a_n(f))=2ord_\varpi (a(f))\\
\end{equation*}
As $ord_\varpi (a(f)) < \infty$ it follows that $\lOp Sel(K, A(f))[\varpi^n]$ is constant for $n$ large, hence for such $n$ we have $Sel(K, A(f))=Sel(K, A_n(f))$ and the lemma follows.
\end{proof}

\begin{prop}(cf. \cite[Lemma 3.5.3]{mr04})\label{cnt}
For $n \geq 1$ the natural map
\begin{equation*}
Sel(K, A(f)[\varpi^n])\longrightarrow Sel(K,A(f))[\varpi^n]
\end{equation*}
is an isomorphism.
\end{prop}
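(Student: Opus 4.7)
The plan is to exploit the short exact sequence of $\Gamma_K$-modules
\[
0 \to A(f)[\varpi^n] \longrightarrow A(f) \xrightarrow{\varpi^n} A(f) \to 0,
\]
and compare its long exact cohomology sequence globally and at each completion $K_v$. The starting point is the resulting global exact sequence
\[
0 \to A(f)^{\Gamma_K}/\varpi^n \to H^1(K, A(f)[\varpi^n]) \to H^1(K, A(f))[\varpi^n] \to 0.
\]
My first step is to show the vanishing $A(f)^{\Gamma_K}=0$: if $x$ were a non-zero element of $A(f)^{\Gamma_K}$ of minimal $\varpi$-order $m \geq 1$, then $\varpi^{m-1}x$ would be a non-zero element of $A_1(f)^{\Gamma_K}$, so it suffices to prove $A_1(f)^{\Gamma_K}=0$. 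By hypothesis (2) the image of $\bar\rho$ contains $SL_2(\mathbf{F}_p)$; since $p>3$, the group $SL_2(\mathbf{F}_p)$ admits no normal subgroup of index two (its quotient by the centre is simple), while $\Gamma_K \subseteq \Gamma_F$ has index two. It follows that $\bar\rho(\Gamma_K)$ still contains $SL_2(\mathbf{F}_p)$, so $\bar\rho|_{\Gamma_K}$ is absolutely irreducible and has no non-zero fixed vectors.

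This vanishing upgrades the displayed sequence to an isomorphism $H^1(K, A(f)[\varpi^n]) \xrightarrow{\sim} H^1(K, A(f))[\varpi^n]$, which makes injectivity of the Selmer map automatic. For surjectivity, I would pick $c \in Sel(K, A(f))[\varpi^n]$, take its unique preimage $\tilde c \in H^1(K, A(f)[\varpi^n])$, and check that $loc_v(\tilde c) \in H^1_f(K_v, A(f)[\varpi^n])$ at every place $v$ of $K$. Here the key point recalled in Section \ref{selgps} is that the local conditions on $A(f)[\varpi^n]$ are defined by propagation, i.e.\ $H^1_f(K_v, A(f)[\varpi^n])$ is precisely the preimage of $H^1_f(K_v, A(f))$ under the natural map $H^1(K_v, A(f)[\varpi^n]) \to H^1(K_v, A(f))$. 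Functoriality of $loc_v$ with respect to $A(f)[\varpi^n] \hookrightarrow A(f)$ sends $\tilde c$ to $c$ locally, and $loc_v(c) \in H^1_f(K_v, A(f))$ by hypothesis, whence $loc_v(\tilde c)$ lands in the preimage, i.e.\ in $H^1_f(K_v, A(f)[\varpi^n])$.

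The only real work is the group-theoretic descent showing that $\bar\rho|_{\Gamma_K}$ retains big image, which I view as the (mild) main obstacle. Beyond that, the proof is formal and runs in close parallel to the Mazur--Rubin descent lemma cited in the statement; in particular one needs no input from the more delicate descriptions of $H^1_f(K_v, A_n(f))$ at $v\mid\n$ or $v\mid p$ worked out earlier in Section \ref{selgps}.
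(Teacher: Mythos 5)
Your proof is correct and essentially the same as the paper's: both reduce to the long exact sequence for $0 \to A(f)[\varpi^n] \to A(f) \xrightarrow{\varpi^n} A(f) \to 0$, the vanishing of $A(f)^{\Gamma_K}$, and the fact that the local conditions on $A(f)[\varpi^n]$ are propagated from those on $A(f)$. The paper packages this as a three-column diagram over $H^1(K_\Sigma/K,\cdot)$ and a snake-lemma argument, while you work directly in $H^1(K,\cdot)$ and check the local conditions termwise, but the underlying content is identical. One genuine value-added in your write-up: the paper simply asserts $H^0(K, A(f))=0$ ``since $A_1(f)$ is irreducible'', whereas irreducibility as a $\Gamma_F$-module does not by itself rule out $\Gamma_K$-fixed vectors in a degree-two extension; your argument that $SL_2(\mathbf{F}_p)$ has no normal subgroup of index two (for $p>3$), so that $\bar\rho(\Gamma_K)$ still contains $SL_2(\mathbf{F}_p)$, is precisely the missing justification, and it correctly invokes the large-image hypothesis (2) rather than only Assumption \ref{absred}.
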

\begin{proof}
To shorten the notation let us denote $A(f)$ by $M$ in this proof. Let $\Sigma$ be the set consisting of all infinite places of $K$ and all places dividing $\n p$. Then we have the following commutative diagram with exact rows:
\begin{center}
\begin{tikzcd}
Sel(K, M[\varpi^n]) \arrow[r, hook] \arrow[d] & H^1(K_\Sigma/K, M[\varpi^n]) \arrow[r] \arrow[d] & \oplus_{v \in \Sigma} H^1(K_v, M[\varpi^n])/H^1_f(K_v, M[\varpi^n])\arrow[d]\\
Sel(K,M)[\varpi^n] \arrow[r, hook] & H^1(K_\Sigma/K, M)[\varpi^n] \arrow[r] & \oplus_{v \in \Sigma} H^1(K_v, M)/H^1_f(K_v, M)
\end{tikzcd}
\end{center}
where $K_\Sigma/K$ is the maximal extension unramified outside $\Sigma$.

Since the Selmer structure on $M[\varpi^n]$ is propagated from the Selmer structure on $M$, the rightmost vertical map in injective. Therefore by the snake lemma it is enough to show that the central vertical map is an isomorphism. We have an exact sequence:
\begin{equation*}
0 \longrightarrow M[\varpi^n]\longrightarrow M\xrightarrow{\cdot \varpi^n} M \longrightarrow 0.
\end{equation*}
Taking the long exact sequence in cohomology we find an exact sequence:
\begin{equation*}
H^0(K_\Sigma/K, M) \longrightarrow H^1(K_\Sigma/K, M[\varpi^n]) \longrightarrow H^1(K_\Sigma/K, M)\xrightarrow{\varpi^n} H^1(K_\Sigma/K, M).
\end{equation*}
To end the proof it suffices to notice that $H^0(K, M)=0$ since $M[\varpi]=A_1(f)$ is irreducible.
\end{proof}

\subsection{} We will prove the inequality in Lemma \ref{enmodp}, hence Theorem \ref{thbkdef}, exploiting a system of cohomology classes belonging to $H^1(K, T_n(f))$; their construction relies on level raising of $f$ modulo $\varpi^n$ at suitable \emph{admissible primes}, introduced in the next section, allowing to realise $T_n(f)$ in the cohomology of several Shimura curves.

\section{Explicit reciprocity laws}\label{reclaws}

\subsection{} Fix $f \in S(\n)$ and $K/F$ as in \ref{fixf}.

\begin{defin}\label{defadm}
Let $n\geq 1$. A prime $\pl$ of $\mathcal{O}_F$ is called $n$-admissible if:
\begin{enumerate}
\item $\pl \nmid p \: \n $.
\item $\pl$ is inert in $K$.
\item $p \nmid N(\pl)^2-1$.
\item $(N(\pl)+1)^2\equiv \lambda_{f}(T_\pl)^2\pmod {\varpi^n}$.
\end{enumerate}
\end{defin}

\begin{notat}
In what follows we will work with certain automorphic forms modulo $\varpi^n$ on totally definite quaternion algebras. We will deal as always only with automorphic forms with trivial central character; furthermore the level of the automorphic forms we will consider will always come from a maximal order. To shorten our notation, if $B/F$ is a totally definite quaternion algebra, we will denote by $S^{B^\times}(\Op/\varpi^n)$ the space of automorphic forms on $B^\times$ with trivial central character and of level $\hat{R}^\times$ where $R\subset B$ is a maximal order. In other words, $S^{B^\times}(\Op/\varpi^n)=\{f: B^\times \backslash \hat{B}^\times/\hat{R}^\times\hat{F}^\times \rightarrow \Op/\varpi^n\}$.
\end{notat}

\begin{defin}\label{defadmeig}
\begin{enumerate}
\item An eigenform $g \in S^{B^\times}(\Op/\varpi^n)$ is called $n$-admissible if $B$ is a totally definite quaternion algebra of discriminant $\mathfrak{D}_g$ divisible by $\n$ and by $n$-admissible primes, $g$ is non zero modulo $\varpi$ and the Hecke eigenvalues of $g$ for the Hecke operators outside $\mathfrak{D}_g/\n$ are equal to those of $f$ modulo $\varpi^n$ . If $k \leq n$, the reduction modulo $\varpi^k$ of $g$ will be denoted by $g_k$.
\item For an admissible eigenform $g \in S^{B^\times}(\Op/\varpi^n)$ we define $a(g)=\sum_{P \in K^\times \backslash \hat{K}^\times/\hat{F}^\times \hat{\mathcal{O}}_K^\times}g(\hat{\iota}(P)) \in \Op/\varpi^n$, where $\hat{\iota}$ is defined as in \eqref{hatiota}.
\end{enumerate}
\end{defin}

\begin{lem}\label{structadm}(cf. \cite[p. 328]{lon12})
Let $\pl$ be an $n$-admissible prime. Then:
\begin{enumerate}
\item $T_n(f)\simeq \Op/\varpi^n\oplus  \Op/\varpi^n(1)$ as $\Gamma_{K_\pl}$-modules, and this decomposition is unique.
\item $H^1(K_\pl, T_n(f))\simeq H^1(K_\pl, \Op/\varpi^n)\oplus H^1(K_\pl, \Op/\varpi^n(1))$ where both direct summands are free $\Op/\varpi^n$-modules of rank one, and the first one is identified with the unramified cohomology group $H^1_{ur}(K_\pl, T_n(f))$.
\end{enumerate}
\end{lem}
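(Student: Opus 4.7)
The plan is to exploit the fact that $\pl$ is unramified in the Galois representation $T_n(f)$ to diagonalise the Frobenius action, and then compute local cohomology summand by summand.

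For part (1), I would first observe that since $\pl \nmid \n p$, the representation $T_n(f)$ is unramified at $\pl$, so the action of $\Gamma_{K_\pl}$ factors through the pro-cyclic quotient generated by $Fr_{K_\pl}$. The first step is to pin down the characteristic polynomial of $Fr_\pl$ on $T_n(f)$ over $F_\pl$ modulo $\varpi^n$. From the admissibility equation $(N(\pl)+1)^2 \equiv \lambda_f(T_\pl)^2 \pmod{\varpi^n}$ one factors
\begin{equation*}
(N(\pl)+1-\lambda_f(T_\pl))(N(\pl)+1+\lambda_f(T_\pl)) \equiv 0 \pmod{\varpi^n};
\end{equation*}
since $p > 2$ and $N(\pl) \not\equiv -1 \pmod p$, at most one factor can vanish modulo $\varpi$, so one is a unit and the other $\equiv 0 \pmod{\varpi^n}$. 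Hence $\lambda_f(T_\pl) \equiv \pm(N(\pl)+1) \pmod{\varpi^n}$. Combined with $\det(Fr_\pl \mid V_f) = N(\pl)$, the characteristic polynomial of $Fr_\pl$ on $V_f$ factors modulo $\varpi^n$ as $(x \mp 1)(x \mp N(\pl))$, whose roots are distinct modulo $\varpi$ because $p \nmid N(\pl)^2 - 1$. The Chinese remainder theorem for $\Op/\varpi^n[x]$ then splits $T_n(f)$ into two $Fr_\pl$-stable summands, each free of rank one by Nakayama (comparing with the residual eigenspace decomposition). Restricting to $\Gamma_{K_\pl}$ replaces $Fr_\pl$ by $Fr_{K_\pl} = Fr_\pl^2$, and the Tate twist in $V(f) = V_f(1)$ divides eigenvalues by $N(\pl)$; the surviving eigenvalues on $T_n(f)_{|\Gamma_{K_\pl}}$ become $1$ and $N(\pl_K)^{-1}$, still distinct modulo $\varpi$. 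This identifies the summands as the trivial module $\Op/\varpi^n$ and $\Op/\varpi^n(1)$. Uniqueness is automatic: any $\Gamma_{K_\pl}$-stable free submodule of rank one on which $Fr_{K_\pl}$ acts trivially must lie in the kernel of $Fr_{K_\pl}-1$, which by the distinctness of eigenvalues mod $\varpi$ coincides with the corresponding rank-one summand, and similarly for the other summand.

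For part (2), the decomposition from (1) reduces everything to computing $H^1(K_\pl, \Op/\varpi^n)$ and $H^1(K_\pl, \Op/\varpi^n(1))$ separately. Since the residue characteristic of $K_\pl$ is different from $p$, I will use the inflation-restriction sequence
\begin{equation*}
0 \longrightarrow H^1_{ur}(K_\pl, M) \longrightarrow H^1(K_\pl, M) \longrightarrow H^1(I_{K_\pl}, M)^{Fr_{K_\pl}} \longrightarrow 0
\end{equation*}
whose unramified part is $M/(Fr_{K_\pl}-1)M$ and whose tame quotient is $M(-1)^{Fr_{K_\pl}=1}$. For $M = \Op/\varpi^n$ the Frobenius acts trivially, so $H^1_{ur} \simeq \Op/\varpi^n$, while the tame quotient vanishes because $N(\pl_K)-1$ is a unit mod $\varpi$. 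For $M = \Op/\varpi^n(1)$ the situation reverses: $Fr_{K_\pl}$ acts by $N(\pl_K)^{-1}$, a unit distinct from $1$ mod $\varpi$, so $H^1_{ur} = 0$, while the tame quotient is $\Op/\varpi^n$. Each summand is thus free of rank one, and the unramified subgroup of $H^1(K_\pl, T_n(f))$ is precisely the first factor, as claimed.

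The only genuinely delicate step is extracting a single sign for $\lambda_f(T_\pl) \pmod{\varpi^n}$ from the square relation in the admissibility hypothesis; this crucially uses $p > 2$ together with $N(\pl) \not\equiv \pm 1 \pmod p$, without which the two sign branches could interact and prevent a clean factorisation of the characteristic polynomial. After that point the argument is a formal eigenspace decomposition followed by a standard unramified/tame local cohomology computation.
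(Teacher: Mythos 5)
Your proof is correct, and both parts align with the paper's argument in substance. The one place you take a longer route is in part (1): you first resolve the sign of $\lambda_f(T_\pl)\pmod{\varpi^n}$ from the square relation (using $p>2$ and $N(\pl)\not\equiv\pm1\pmod p$) to factor the characteristic polynomial of $Fr_\pl$ over $F_\pl$, then pass to $Fr_{K,\pl}=Fr_\pl^2$ and twist. The paper instead computes directly with $Fr_{K,\pl}$: the reverse characteristic polynomial of $Fr_{K,\pl}$ on $T_n(f)$ has trace $(\lambda_f(T_\pl)^2-2N(\pl))/N(\pl)^2$ and determinant $N(\pl)^{-2}$, so the congruence $\lambda_f(T_\pl)^2\equiv(N(\pl)+1)^2$ \emph{immediately} gives $(1-x)(1-N(\pl)^{-2}x)$ without any sign resolution — squaring kills the ambiguity. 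Your version is more explicit and has the side benefit of producing the sign $\epsilon_\pl$ that the paper needs later (in Theorem 6.0.8), but for the present lemma it is strictly more work; the "only genuinely delicate step" you flag is one your route creates rather than one inherent to the statement. Part (2), via the unramified/tame inflation–restriction computation applied summand by summand, is essentially identical to the paper's.
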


\begin{notat}
We denote the summand $H^1(K_\pl, \Op/\varpi^n(1))$ in the above decomposition by $H^1_{tr}(K_{\pl}, T_n(f))$, so that $H^1(K_\pl, T_n(f))=H^1_{ur}(K_\pl, T_n(f))\oplus H^1_{tr}(K_\pl, T_n(f))$.
\end{notat}

\begin{proof} The direct sum decomposition in $(1)$ comes from the fact that, by $(2)$ and $(4)$ in Definition \ref{defadm}, the polynomial $det(1-xFr_{K, \pl}|T_n(f))$ splits as a product $(1-x)(1-N_{F/\Q}(\pl)^{-2}x)$. Moreover $(3)$ guarantees that $1\not \equiv N_{F/\Q}(\pl)^2\pmod {\varpi^n}$, hence the decomposition is unique. This also implies that $H^1_{ur}(K_\pl, T_n(f))=T_n(f)/(Fr_{K, \pl}-1)T_n(f)=\Op/\varpi^n$. Finally, the quotient $H^1(K_\pl, T_n(f))/H^1_{ur}(K_\pl, T_n(f))$ equals $Hom(I_\pl, T_n(f))^{\Gamma_{K_\pl}}$. Any such morphism factors through the tame inertia, and has image contained in $\Op/\varpi^n(1)$ since $Fr_{K, \pl}$ acts on the tame inertia as multiplication by $N_{F/\Q}(\pl)^{-2}$.
\end{proof}

\begin{notat}\label{changesel}
Let $n \geq 1$, $M=T_n(f)\simeq A_n(f)$ and let $\mathfrak{a}$ be a product of distinct $n$-admissible primes. We denote by $Sel_{\mathfrak{a}}(K, M)$ (resp. $Sel^{\mathfrak{a}}(K, M)$, $Sel_{(\mathfrak{a})}(K, M)$) the Selmer group defined by the same local conditions as those for $M$ at all places except at $\pl \mid \mathfrak{a}$, where the local condition is the zero subspace (resp. $H^1(K_\pl, M)$, $H^1_{tr}(K_\pl, M)$). If $\mathfrak{b}$ is a product of distinct $n$-admissible primes not dividing $\mathfrak{a}$, we will combine the above notations with the obvious meaning. For example, we denote by $Sel^\mathfrak{b}_{(\mathfrak{a})}(K, M)$ the Selmer group obtained imposing as local condition at primes dividing $\mathfrak{a}$ (resp. $\mathfrak{b}$) the transverse (resp. relaxed) one.\\
Let us point out that, if $g \in S^{B^\times}(\Op/\varpi^n)$ is an $n$-admissible form which is the reduction of the Jacquet-Langlands transfer of a Hilbert newform $\tilde{f} \in S(\mathfrak{D}_g)$, then by the discussion in section \ref{selgps} we have $Sel(K, T_n(\tilde{f}))=Sel_{(\mathfrak{D}_g/\mathfrak{n})}(K, T_n(f))$.
\end{notat}

\subsection{} As remarked in the previous section, in order to prove Theorem \ref{thbkdef} it suffices to prove the inequality \eqref{modneq} for arbitrarily large $n$. In light of this, Theorem \ref{thbkdef} will follow from the next result, taking $g$ to be the reduction  modulo $\varpi^n$ of a (suitably normalised) Jacquet-Langlands transfer of $f$ to the totally definite quaternion algebra with discriminant $\n$. The proof of the theorem is the object of the next section.

\begin{teo}\label{mainthm}
Let $f \in S(\n)$ and let $K/F$ be a $CM$ extension satisfying assumptions $(1)$, $(2)$, $(3)$ of Theorem \ref{thbkdef}.
Let $n=2k$ and let $g \in S^{B^\times}(\Op/\varpi^n)$ be an admissible eigenform such that $a(g)\not \equiv 0 \pmod {\varpi^k}$. Then the following inequality holds:
\begin{equation*}
\lOp Sel_{(\mathfrak{D}_g/\n)}(K, A_k(f))\leq 2ord_{\varpi}(a(g_k)).
\end{equation*}
Moreover the above inequality is an equality provided that the following implication holds true: if $h$ is an admissible automorphic form mod $\varpi$ and $Sel_{(\mathfrak{D}_h/\n)}(K, A_1(f))=0$ then $a(h) \in \Op/\varpi$ is non zero.
\end{teo}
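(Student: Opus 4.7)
The plan is to argue by induction on $t := ord_\varpi(a(g_k))$, running level-raising at admissible primes in the spirit of Bertolini--Darmon. The engine will be a system of cohomology classes $c(\pl) \in H^1(K, T_n(f))$, to be produced in the next section from level-raisings of $g$ at $n$-admissible primes $\pl$, satisfying two explicit reciprocity laws: the first identifies the transverse component of $loc_\pl(c(\pl))$ in $H^1_{tr}(K_\pl, T_n(f)) \simeq \Op/\varpi^n$ with a $\varpi$-adic unit multiple of $a(g)$, and the second identifies the unramified component of $loc_{\pl'}(c(\pl))$ at a second admissible prime $\pl'$ with a unit multiple of $a(g(\pl\pl'))$, where $g(\pl\pl')$ denotes the level-raising of $g$ to the totally definite quaternion algebra of discriminant $\mathfrak{D}_g \pl \pl'$. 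The class $c(\pl)$ is constructed so as to satisfy the Selmer conditions defining $Sel_{(\mathfrak{D}_g/\n)}$ at every place other than $\pl$, so that for any $s \in Sel_{(\mathfrak{D}_g/\n)}(K, A_k(f))$ the global reciprocity sum collapses to $\langle loc_\pl(s), loc_\pl(c(\pl)_k) \rangle_\pl = 0$.

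In the base case $t = 0$ the reduction $a(g_k)$ is a $\varpi$-adic unit, and the goal is to show $Sel_{(\mathfrak{D}_g/\n)}(K, A_k(f)) = 0$. If $0 \neq s$ belonged to this group, Chebotarev combined with hypothesis (2) would produce an $n$-admissible prime $\pl$ with $loc_\pl(s) \in H^1_{ur}(K_\pl, A_k(f))$ nonzero. By the first reciprocity law the transverse component of $loc_\pl(c(\pl)_k)$ is a generator of $H^1_{tr}(K_\pl, T_k(f)) \simeq \Op/\varpi^k$; since the unramified condition is isotropic for the local Tate pairing, the duality identity reduces to the pairing of $loc_\pl(s)$ with this transverse generator, and perfectness of $H^1_{ur} \times H^1_{tr} \to E_\p/\Op$ then forces $loc_\pl(s) = 0$, a contradiction.

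For the inductive step, assume the bound for all admissible forms of invariant strictly smaller than $t$, and take $g$ with $ord_\varpi(a(g_k)) = t \geq 1$. If $Sel_{(\mathfrak{D}_g/\n)}(K, A_k(f)) = 0$ we are done; otherwise a nonzero $s_1$ of maximal order is paired via Chebotarev with an admissible prime $\pl_1$ so that $loc_{\pl_1}(s_1)$ has full order in $H^1_{ur}(K_{\pl_1}, A_k(f))$, and as in the base case the first reciprocity law and the duality pairing bound $ord_\varpi(s_1)$ by $t$. A further Chebotarev selection produces a second admissible prime $\pl_2$---chosen so that $loc_{\pl_2}$ detects the appropriate complementary structure in the Selmer group and the second reciprocity law becomes nontrivial at $\pl_2$---forcing the level-raised form $g(\pl_1\pl_2)$ on the totally definite algebra of discriminant $\mathfrak{D}_g \pl_1 \pl_2$ to satisfy $ord_\varpi(a(g(\pl_1 \pl_2)_k)) < t$. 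Applying the inductive hypothesis to $g(\pl_1\pl_2)$ bounds $\lOp Sel_{(\mathfrak{D}_g \pl_1 \pl_2 /\n)}(K, A_k(f))$, and a Poitou--Tate comparison of the two Selmer structures---which differ only at $\pl_1, \pl_2$ by swapping between unramified and transverse rank-one local conditions---yields the sought bound $\lOp Sel_{(\mathfrak{D}_g/\n)}(K, A_k(f)) \leq 2t$.

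The main obstacle is the coordination of the two Chebotarev choices: $\pl_1$ and $\pl_2$ must simultaneously witness enough structure in the Selmer group, enforce the strict drop of $ord_\varpi(a(g(\pl_1\pl_2)_k))$ through the second reciprocity law, and preserve the numerical balance between local-condition swaps and the dropping invariant to yield the factor $2$ in the final bound. This is where the big-image hypothesis (2) and the arithmetic conditions in (1) and (3) on $p, \n, K$ are essential. For the equality statement one runs the same induction down to the residual level $k = 1$: the implication assumed in the theorem---that $a(h) \in (\Op/\varpi)^\times$ whenever $Sel_{(\mathfrak{D}_h/\n)}(K, T_1(f))$ vanishes---is exactly what is needed to make the residual base case tight, and propagating this tightness up the inductive ladder turns every inequality above into an equality.
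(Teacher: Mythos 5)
Your high-level plan—induction on $t = ord_\varpi(a(g_k))$, a base case driven by the first reciprocity law and global duality, and an inductive step raising the level at two well-chosen admissible primes followed by a Poitou--Tate comparison—is indeed the skeleton of the paper's argument. The base case is essentially correct. However, there are substantive gaps in the inductive step and in the equality statement.

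First, the selection of $\pl_1$ is wrong as stated. You ask for a Selmer element $s_1$ of maximal order whose localization $loc_{\pl_1}(s_1)$ ``has full order in $H^1_{ur}(K_{\pl_1}, A_k(f)) \simeq \Op/\varpi^k$,'' i.e.\ generates it. But the first reciprocity law already forces $Sel_{(\mathfrak{D}_g/\n)}(K, A_k(f))$ to be killed by $\varpi^{k-1}$ (the paper's Proposition~\ref{weakannihil}), so no element of this group can have a localization generating a free rank-one $\Op/\varpi^k$-module. The paper's actual mechanism is different and more delicate: one first checks (Lemma~\ref{lemred}) that nontriviality of the Selmer group forces the existence of admissible $\pl$ with $t(g_k,\pl) := ord_\varpi(c(\pl)) < t(g_k)$, then one picks $\pl_1$ \emph{minimizing} $t(g_k,\pl)$, and then picks $\pl_2$ so that $ord_\varpi(loc_{\pl_2}(\kappa(\pl_1)))=0$, where $\kappa(\pl_1)$ is a ``primitivization'' $c(\pl_1)=\varpi^{t(g_k,\pl_1)}\kappa(\pl_1)$. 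The minimality of $\pl_1$ is what forces $t(g_k,\pl_1)=t(g_k,\pl_2)$ and the symmetric unit-localization $ord_\varpi(loc_{\pl_1}(\kappa(\pl_2)))=0$, and these in turn are what identify the two Selmer groups $Sel_{(\mathfrak{D}_g\pl_1\pl_2/\n)}(K,A_k(f))=Sel_{(\mathfrak{D}_g/\n)\pl_1\pl_2}(K,A_k(f))$. Your description of $\pl_2$ as detecting ``the appropriate complementary structure in the Selmer group'' does not pin this down.

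Second, you never address why one must start with modulus $\varpi^{n}=\varpi^{2k}$ to prove a statement modulo $\varpi^k$. The construction of $\kappa(\pl_1)$ as a genuine element (rather than a coset) inside a \emph{free} rank-one $\Op/\varpi^k$-submodule requires a freeness property of the Euler system (Proposition~\ref{freesys}), which uses the structure theorems $Sel_{(\mathfrak{D}_g/\n)}\simeq N\oplus N$, $Sel_{(\mathfrak{D}_g\pl/\n)}\simeq\Op/\varpi^n\oplus N'\oplus N'$ (Corollary~\ref{changepar}) and only holds after reducing from $\varpi^{2k}$ to $\varpi^k$. Without it, the quantity $ord_\varpi(\kappa(\pl))$ is not well-defined and the numerology breaks.

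Third, the actual length computation is where all the work is, and it cannot be dismissed as ``a Poitou--Tate comparison... yields the sought bound.'' The paper produces nonnegative integers $m_1,m_2$ and establishes the exact identity
\[
\lOp Sel_{(\mathfrak{D}_g/\n)}(K, A_k(f))-2t(g_k)=\lOp Sel_{(\mathfrak{D}_g\pl_1\pl_2/\n)}(K, A_k(f))-2t(h_k)-m_1-m_2,
\]
from which the inequality follows since $m_1,m_2\geq 0$ and $t(h_k)<t(g_k)$. The factor $2$ comes out of a careful count of how the Selmer length changes under a two-prime local-condition swap, not from an a priori principle; you need to exhibit this count. Finally, for the equality statement the point is \emph{not} ``propagating tightness'' from the residual level: it is (a) that the assumed implication applied to $g_1$ shows the Selmer group is nonzero whenever $t(g_k)>0$, so Lemma~\ref{keylem} can be invoked; and (b) a separate argument, using $ord_\varpi(loc_{\pl_i}(\kappa(\pl_j)))=0$ and the $\varpi^{k-1}$-annihilation, showing $m_1=m_2=0$. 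Neither step appears in your sketch.
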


\begin{rem}\label{reverseimp}
\begin{enumerate}
\item As it will become clear later (see Remark \ref{freeness}), we have to work modulo $\varpi^{2k}$ in order to establish a result modulo $\varpi^k$ because we will need to make use of a certain \emph{freeness} property of the Euler system we construct. This is immaterial as long as we are interested in the special value formula \eqref{maineq}, which concerns modular forms in characteristic zero.
\item Let us say a word on the condition allowing to promote the inequalities in the above theorem to equalities. Lift $h$ to an eigenform in characteristic zero \cite[Lemma 6.11]{dese74}, and let $\tilde{f}$ be the Hilbert modular form obtained as Jacquet-Langlands transfer of a lift. In light of Zhang's special value formula and the observation in \ref{changesel}, what we need to know is the implication
\begin{equation*}
Sel(K, A(\tilde{f}))=0\Longrightarrow L^{alg}(\tilde{f}_K, 1) \text{ is a unit.}
\end{equation*}
This is currently deduced from Skinner-Urban's divisibility in the relevant Iwasawa main conjecture \cite{su14} (proved in the ordinary case for Hilbert modular forms by Wan \cite{wan15}).

However we would like to point out that the result in \emph{loc. cit.} is stronger than what we need, and our theorem shows that a generalisation to $GL_{2,F}$ of Ribet's converse of Herbrand's theorem \cite{rib76} would suffice to obtain the sought-for equality.
\end{enumerate}
\end{rem}

\subsection{} We will now work with the notations and assumptions of Theorem \ref{mainthm}: in particular we are given an admissible eigenform $g \in S^{B^\times}(\Op/\varpi^n)$ on a definite quaternion algebra $B$ of discriminant $\mathfrak{D}_g$. It determines an $\Op/\varpi^n$-valued character of the Hecke algebra whose kernel will be denoted by $I_g$, and a surjective map $\psi_g: S^{B^\times}(\Op)\rightarrow \Op/\varpi^n$ sending $h$ to $\langle g, h \rangle_{\hat{R}^\times}$ where the pairing $\langle \cdot, \cdot \rangle_{\hat{R}^\times}$ is defined in \cite[3.5]{wa15}.\\
The next Theorem \ref{raiselev} collects the essential ingredients needed to construct the cohomology classes which we will use in our Euler system argument. It is a level raising result at an admissible prime $\pl\nmid \mathfrak{D}_g$, stating that the representation $T_n(f)$ appears in the mod $\varpi^n$-cohomology of the (quotient) Shimura curve $X_\pl$ with full level structure attached to a quaternion algebra $B_\pl$ of discriminant $\mathfrak{D}_g \pl$. Before stating the result we need to introduce some notation. Let $\mathbb{T}^{B_\pl^\times} =\Op[T_v, v \nmid \mathfrak{D}_g\pl, U_v, v \mid \mathfrak{D}_g\pl]$. Define $\lambda_\pl: \mathbb{T}^{B_\pl^\times} \rightarrow \Op/\varpi^n$ by sending operators different from $U_\pl$ to the corresponding eigenvalue for their action on $g$, and sending $U_\pl$ to the value $\epsilon_\pl \in \{\pm 1\}$ such that $N(\pl)+1 \equiv \epsilon_\pl \lambda_f(T_\pl) \pmod {\varpi^n}$. Let $I_\pl=Ker(\lambda_\pl)$. Let $J_\pl$ be the Jacobian of $X_\pl$ and $\phi_\pl$ the group of connected components of the special fibre at $\pl$ of its Néron model.

\begin{teo}\label{raiselev}
There is an isomorphism of $\Gamma_F$-modules
\begin{equation}\label{isorepn}
(T_p(J_\pl)\otimes_{\Z_p}\Op)/I_\pl \simeq T_n(f).
\end{equation}
Furthermore there are isomorphisms $(\phi_\pl\otimes\Op)/I_\pl \simeq S^{B^\times}(\Op)/I_g\simeq \Op/\varpi^n$, the last one being induced by $\psi_g$, and a commutative diagram
\begin{center}
\begin{tikzcd}
J_\pl(K_\pl)/I_\pl \arrow[r, "\kappa"] \arrow[d] & H^1(K_\pl, T_n(f))\arrow[d]\\
(\phi_\pl\otimes\Op)/I_\pl \arrow[r, "\simeq"] & H^1_{tr}(K_{\pl}, T_n(f))
\end{tikzcd}
\end{center}
where the map $\kappa$ is induced by the Abel-Jacobi map and the isomorphism \eqref{isorepn}.
\end{teo}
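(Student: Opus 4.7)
The plan is to realise \eqref{isorepn} as an instance of a level-raising theorem à la Ribet, then to identify the component group via a Cerednik-Drinfeld style uniformisation, and finally to assemble the diagram from a standard local analysis of the Abel-Jacobi map on a Jacobian with purely toric reduction.

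First I would establish \eqref{isorepn}. The $n$-admissibility of $\pl$ means that $(N(\pl)+1)^2 \equiv \lambda_f(T_\pl)^2 \pmod{\varpi^n}$, which is precisely the Ribet-type congruence allowing one to raise the level of $f$ at $\pl$ (with $U_\pl$-eigenvalue $\epsilon_\pl$) on the definite quaternion algebra $B_\pl$ of discriminant $\mathfrak{D}_g\pl$; so the ideal $I_\pl\subset \mathbb{T}^{B_\pl^\times}$ is a proper ideal and it arises (via Jacquet-Langlands and Eichler-Shimura) as the kernel of a Hecke character on $T_p(J_\pl)\otimes\Op$. To identify the $I_\pl$-quotient with $T_n(f)$ one needs a \emph{multiplicity-one} (in fact freeness of rank two) statement modulo $\varpi^n$ for the Tate module of $J_\pl$ localised at the residual maximal ideal $\mathfrak{m}\supset I_\pl$. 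This is where hypothesis (2) of Theorem \ref{thbkdef} and Ihara's lemma enter: the big image assumption $SL_2(\mathbf{F}_p)\subset \mathrm{Im}(\bar\rho)$ implies that $\bar\rho$ is non-Eisenstein (and the Eisenstein-type congruence cannot occur at the raised primes either), and combined with Ihara's lemma on $X_\pl$ one obtains that $T_p(J_\pl)_\mathfrak{m}\otimes_{\Z_p}\Op$ is free of rank two over $\mathbb{T}^{B_\pl^\times}_\mathfrak{m}$. Taking the quotient by $I_\pl$ then yields a free module of rank two over $\Op/\varpi^n$ whose trace of Frobenius at almost all primes matches that of $T_n(f)$, and by Chebotarev plus Brauer-Nesbitt the Galois action is isomorphic to $T_n(f)$, as desired. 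Once this is done, the isomorphism $S^{B^\times}(\Op)/I_g\simeq \Op/\varpi^n$ is tautological from the definition of $\psi_g$ and the fact that $g$ is a non-zero-mod-$\varpi$ eigenform.

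Next I would treat the component group $\phi_\pl$. Since $\pl$ divides the discriminant of $B_\pl$ but not the level $\mathfrak{D}_g$ (so that $B_\pl$ is split at $\pl$ with respect to the original algebra $B$ in the sense that $B_\pl$ is obtained from $B$ by interchanging $\pl$ and one archimedean place), Cerednik-Drinfeld uniformisation shows that $X_\pl$ has totally degenerate (purely toric) reduction at $\pl$ and that the dual graph of its special fibre is naturally described by a double coset space on $\hat{B}^\times$. The associated character group of the toric part of the Néron model of $J_\pl$ is then canonically identified, as a Hecke module, with $S^{B^\times}(\Op)$ (this is the ``switching'' principle of Ribet). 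Combined with the monodromy pairing this yields $\phi_\pl\otimes\Op / I_g \simeq S^{B^\times}(\Op)/I_g\simeq \Op/\varpi^n$; the key input at this step is again that $\mathfrak{m}$ is non-Eisenstein so that the Shimura subgroup and the torsion of $\phi_\pl$ contribute trivially after localisation.

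Finally, for the commutative diagram, recall that $K_\pl/F_\pl$ is unramified of degree two (as $\pl$ is inert in $K$), and $J_\pl$ has purely toric reduction at $\pl$. The Kummer map $J_\pl(K_\pl)/\varpi^n \hookrightarrow H^1(K_\pl,J_\pl[\varpi^n])$, followed by the projection $J_\pl[\varpi^n]\otimes\Op\twoheadrightarrow T_n(f)$, produces $\kappa$. The formula for local points on an abelian variety with toric reduction identifies $J_\pl(K_\pl)/(\text{bounded part})$ with the component group, and through the local Kummer/Tate duality the image of this quotient lies exactly in $H^1_{tr}(K_\pl,T_n(f))=H^1(K_\pl,\Op/\varpi^n(1))$ of Lemma \ref{structadm}. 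Matching these identifications gives the commutativity.

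The main obstacle I expect is the freeness of $T_p(J_\pl)_\mathfrak{m}\otimes\Op$ of rank two over the localised Hecke algebra mod $\varpi^n$: this is the place where Ihara's lemma, the hypotheses at the primes dividing $\n$ (condition (3) of Theorem \ref{thbkdef}, ensuring level-raising gives a non-Eisenstein maximal ideal with the correct local structure) and the big image hypothesis must all be combined. The rest, namely the identification of $\phi_\pl$ with the definite automorphic space and the analysis of the Abel-Jacobi map in the toric reduction case, is by now standard, and in fact proofs tailored to the Hilbert modular setting are available (e.g. in \cite{lon12} and \cite{nek12}), to which one may ultimately appeal.
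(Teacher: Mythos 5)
Your overall plan follows the same route that the paper (and the literature it cites) takes: realise \eqref{isorepn} by a Ribet-type level-raising at the admissible prime $\pl$, identify the component group $\phi_\pl$ via Cerednik--Drinfeld uniformisation and the switching of the quaternion algebra, and then read off the commutative diagram from the local structure of the Abel--Jacobi map on a Jacobian with purely toric reduction. The paper does not prove the theorem directly but defers to \cite{lon07}, \cite{wa15} and \cite{nek12}, exactly as you ultimately propose.

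There is however one genuine misattribution in the part you single out as the crux. You invoke ``Ihara's lemma on $X_\pl$'' to obtain the freeness of $T_p(J_\pl)_{\mathfrak m}\otimes_{\Z_p}\Op$ over the localised Hecke algebra. In fact the paper observes (following Chida--Hsieh and Wang) that the essential input is the \emph{definite-side} multiplicity-one statement $S^{B^\times}(\Op)/I_g\simeq \Op/\varpi^n$, and that this is supplied by Manning's theorem \cite[Theorem 1.1]{ma19}, a Taylor--Wiles patching argument whose hypotheses (non-Eisenstein, Steinberg at primes dividing $\mathfrak D_g$, Taylor--Wiles condition from the big-image assumption, and the no $\pm 1$ eigenvalue condition at $\q\mid\n$ coming from hypothesis (3)) are checked in the paper's remark. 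The freeness of the Tate module is then a downstream consequence of this and the Cerednik--Drinfeld identification of the character group. Ihara's lemma plays no role in Theorem~\ref{raiselev}; in this paper it enters only in the proof of the \emph{second} reciprocity law (Theorem~\ref{serec}), where one needs the surjectivity of the map from the cohomology of $X_\pl$ in the supersingular locus to $S^{B'^\times}(\Op/\varpi^n)$, and there the input is the Ihara lemma for Shimura curves over totally real fields established in \cite{mash19}. So your sketch is sound in architecture, but the crucial ingredient you flag as the main obstacle should be credited to Manning's multiplicity-one theorem rather than to Ihara's lemma.
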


\begin{rem} We will not enter into the details of the proof of the above theorem, which already appeared few times in the literature. It was first proved over totally real fields by Longo \cite{lon07}, following the strategy in \cite[Section 5]{bd05}, under the assumption that $f$ is $p$-isolated \cite[Definition 3.2]{lon07}. However, as remarked in \cite{chi15} (generalised to totally real fields in \cite[Theorem 5.3, 5.4, 5.7]{wa15}), one only needs to know that $S^{B^\times}(\Op)/I_{g}\simeq \Op/\varpi^n$. This follows from \cite[Theorem 1.1]{ma19}, whose hypotheses are satisfied in our case: indeed $\bar{\rho}$ is Steinberg at primes dividing $\n$ by Lemma \ref{locm}, and at primes dividing $\mathfrak{D}_g/\n$ by definition of admissible prime. Furthermore, the Taylor-Wiles condition \cite[4. Theorem 1.1]{ma19} is implied by our large image assumption (2) in Theorem \ref{thbkdef}. Finally assumption (3) guarantees that the multiplicity $k$ in \cite[Theorem 1.1]{ma19} equals zero.
\end{rem}

\subsection{Construction of the cohomology class $c(\pl)$.} The notations being as in Theorem \ref{raiselev}, let $Q_K=\sum_{\sigma \in K^\times\backslash \hat{K}^\times/\hat{F}^\times \hat{\mathcal{O}}_K^\times}\sigma(P_K) \in (CH^1(X_\pl)(K)\otimes \Op)/I_{\pl}\simeq (Pic (X_\pl)(K)\otimes \Op)/I_{\pl}$, where $P_K \in X_\pl(\C)$ is a point with $CM$ by $\mathcal{O}_K$. The point $Q_K$ gives rise to a cohomology class $c(\pl) \in H^1(K, (T_p(J_\pl)\otimes_{\Z_p}\Op)/I_\pl)=H^1(K, T_n(f))$.

\subsection{Localisation of $c(\pl)$ at $\pl$: the first reciprocity law.} The first key observation underlying the method introduced in \cite{bd05} is that the cohomology class $c(\pl)$ belongs to $Sel_{(\mathfrak{D}_g\pl/\n)}(K, T_n(f))$, i.e. its localisation at primes dividing $\mathfrak{D}_g/\n$ \emph{and at the additional prime} $\pl$ falls in the \emph{transverse} part. Furthermore the failure for the localisation of $c(\pl)$ at $\pl$ being zero is measured by $a(g)$:

\begin{teo}(First reciprocity law)
\begin{enumerate}
\item $c(\pl) \in Sel_{(\mathfrak{D}_g\pl/\n)}(K, T_n(f))$.
\item $loc_\pl (c(\pl)) \in H^1_{tr}(K_\pl, T_n(f))\simeq \Op/\varpi^n$ and we have an equality, up to $\varpi$-adic unit:
\begin{equation*}
loc_\pl(c(\pl))=a(g).
\end{equation*}
\end{enumerate}
\end{teo}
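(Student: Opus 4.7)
The proof will break into two tasks: checking in part (1) that $c(\pl)$ satisfies the local conditions defining $Sel_{(\mathfrak{D}_g\pl/\n)}(K,T_n(f))$ at every place of $K$, and computing in part (2) the localisation at $\pl$ by means of the $\pl$-adic uniformisation of $X_\pl$. Recall that $c(\pl)$ is obtained by applying the Kummer map to $Q_K\in (Pic(X_\pl)(K)\otimes\Op)/I_\pl$ and then transporting it to $H^1(K,T_n(f))$ via the isomorphism \eqref{isorepn}.

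For part (1), I would treat the different classes of places separately. At $v\nmid p\,\mathfrak{D}_g\pl$ the curve $X_\pl$ has good reduction and $Q_K$ is $K$-rational, so $loc_v(c(\pl))$ is unramified, hence lies in $H_f^1$. At $v\mid p$, admissibility of $\pl$ together with $(p,\mathfrak{D}_g\pl)=1$ ensure good reduction of $X_\pl$ at $p$; the $p$-divisible group of $J_\pl$, quotiented by $I_\pl$, supplies the finite flat model whose existence is used in Proposition \ref{condlocpflat}, so the Kummer image lies in $H^1_f$. At the relaxed places $v\mid\mathfrak{D}_g/\n$ there is nothing to check. At $v\mid\n$ the quaternion algebra $B_\pl$ is ramified, so $X_\pl$ has purely toric reduction; using the filtration on the Tate module of $J_\pl$ coming from the toric sequence and Proposition \ref{nmprop}, I would identify the Kummer image after quotienting by $I_\pl$ with the image of $H^1(K_v,A_n^v(f))$, since the toric part of the Tate module, modulo $I_\pl$, corresponds precisely to the cyclotomic submodule $A_n^v(f)$ of Lemma \ref{locnm}. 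Finally, the transverse condition at $v=\pl$ is exactly what the commutative square in Theorem \ref{raiselev} asserts, once one notes that $K_\pl/F_\pl$ is unramified and that $J_\pl$ has purely toric reduction at $\pl$, so that the Kummer map factors through the component group.

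For part (2) the same factorisation reduces the problem to computing the image of $Q_K$ in $(\phi_\pl\otimes\Op)/I_\pl\xrightarrow{\sim}\Op/\varpi^n$. Here I would invoke the \v Cerednik--Drinfeld $\pl$-adic uniformisation of $X_\pl$, which identifies the supersingular locus of its special fibre at $\pl$ with the quaternionic set $B^\times\backslash\hat{B}^\times/\hat{F}^\times\hat{R}^\times$ attached to the totally definite quaternion algebra $B$ of discriminant $\mathfrak{D}_g$. Since $\pl$ is inert in $K$, every point with CM by $\mathcal{O}_K$ reduces to a supersingular point; after fixing compatible orientations, the Galois translate $\sigma(P_K)$ reduces to $\hat\iota(\sigma)$, where $\hat\iota$ is the map \eqref{hatiota}. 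Consequently the image of $Q_K$ in the component group is, up to sign, the class of $\sum_\sigma \hat\iota(\sigma)$; under the composite isomorphism $(\phi_\pl\otimes\Op)/I_\pl\simeq S^{B^\times}(\Op)/I_g\simeq \Op/\varpi^n$ of Theorem \ref{raiselev}, with the last arrow given by $\psi_g=\langle g,-\rangle_{\hat{R}^\times}$, this class is carried to $\sum_\sigma g(\hat\iota(\sigma))=a(g)$, up to a $\varpi$-adic unit.

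The main obstacle is the final identification: verifying that under the \v Cerednik--Drinfeld uniformisation the reduction of a CM point $\sigma(P_K)$ really does match $\hat\iota(\sigma)$ (this requires a careful matching of orientations on the relevant optimal embeddings), and that the isomorphism $(\phi_\pl\otimes\Op)/I_\pl\simeq S^{B^\times}(\Op)/I_g$ furnished by Theorem \ref{raiselev} is indeed the one induced by $\psi_g$. Both points have been addressed in closely related settings---\cite{bd05} for $F=\mathbf{Q}$ and \cite{lon07,lon12} for totally real fields under additional hypotheses---and the argument should transport here, since the automorphic inputs required (multiplicity one and the level-raising isomorphism) are already packaged into Theorem \ref{raiselev} under our hypotheses.
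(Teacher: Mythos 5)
Your overall strategy coincides with the paper's: a place-by-place verification of the local conditions for part (1), followed by the Cherednik--Drinfeld description of the special fibre of $X_\pl$ at $\pl$ for part (2). There is, however, one genuine error in part (1): the places $v\mid\mathfrak{D}_g/\n$ are \emph{not} relaxed. By the convention of \ref{changesel}, the Selmer group $Sel_{(\mathfrak{D}_g\pl/\n)}(K,T_n(f))$ imposes the \emph{transverse} condition $H^1_{tr}(K_v,T_n(f))$ at \emph{every} prime dividing $\mathfrak{D}_g\pl/\n$, and $H^1_{tr}$ is a proper direct summand of $H^1(K_v,T_n(f))$, so there is definitely something to check at $v\mid\mathfrak{D}_g/\n$ --- indeed this is precisely the point emphasised before the statement of the theorem. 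The verification is the same purely-toric-reduction argument you already use at $v\mid\n$: the Jacobian $J_\pl$ has purely toric reduction at every prime dividing $\mathfrak{D}_g\pl$, so the Kummer image factors through the toric part of the Tate module; at $v\mid\n$ this toric part maps modulo $I_\pl$ to the cyclotomic submodule, giving the finite condition via Proposition \ref{nmprop}, while at an admissible $v\mid\mathfrak{D}_g\pl/\n$ it maps to the summand $\Op/\varpi^n(1)$ of Lemma \ref{structadm}, i.e.\ into $H^1_{tr}$. So the gap is filled with tools you deploy elsewhere in the same proof, but as written the claim that there is ``nothing to check'' rests on a misreading of the local condition.

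A smaller point on part (2): at the prime $\pl$ itself the curve $X_\pl$ has totally degenerate (Mumford-curve) reduction, so one should not speak of its ``supersingular locus'' there; that language belongs to the second reciprocity law, where one reduces $X_\pl$ at a second admissible prime of good reduction. What is actually needed at $\pl$ is that the Cherednik--Drinfeld uniformisation identifies the components of the special fibre (hence the component group $\phi_\pl$) in terms of the definite quaternionic set for $B$, and that CM points, being fixed by $K_\pl^\times$ with $\pl$ inert, specialise compatibly with $\hat\iota$. With that correction your computation of the image of $Q_K$ in $(\phi_\pl\otimes\Op)/I_\pl$ and its pairing against $g$ matches the paper, which delegates exactly this identification to \cite{lon07} (following \cite{bd05}) and to the Cherednik--Drinfeld discussion in \cite{nek12}.
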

\begin{proof}
If $v$ is a place of $K$ not dividing $\mathfrak{D}_g\pl p$ then the Shimura curve $X_\pl$ has good reduction at $v$, hence $loc_v(c(\pl)) \in H^1_{ur}(K_v, T_n(f))$. If $v \mid \n$ then the Jacobian of $X_\pl$ has purely toric reduction at $v$; with the notation as in the proof of Proposition \ref{nmprop}, it follows that $loc_v(c(\pl)) \in Im(H^1(K_\q, T^+/\mathfrak{p}^n)\rightarrow H^1(K_\q, T_n(f)))=H^1_f(K_\q, T_n(f))$ (cf. \cite[(4.17)]{wa15}, \cite[Lemma 8]{gp12}), where the equality follows from Proposition \ref{nmprop}. For the same reason we have that $loc_\mathfrak{q}(c(\pl))\in H^1_{tr}(K_\q, T_n(f))$ for every $\q \mid \mathfrak{D}_g\pl/\n$.\\
For a place $v$ above $p$, since the Jacobian $J_{\pl}$ has good reduction at $v$, the image of the Kummer map in $H^1(K_v, J_\pl[p^n])$ lies in $H^1_{fl}(\mathcal{O}_{K_v}, \mathcal{J}_\pl[p^n])$, where $\mathcal{J}_\pl$ is the Neron model of $J_\pl$. This can be proved by a direct generalisation of \cite[Proposition 3.2]{lv10} (see also \cite[Lemma 7]{gp12}). Since $K$ is unramified at $v$ the map $J_\pl[p^n]\rightarrow T_n(f)$ induced by $T_p(J_\pl)\otimes_{\Z_p}\Op/I_\pl\simeq T_n(f)$ comes from a map $\mathcal{J}_\pl[p^n]\rightarrow \mathcal{G}$ where $\mathcal{G}$ is a finite flat group scheme with generic fiber $T_n(f)$. The description of the finite condition at places above $p$ in Proposition \ref{condlocpflat} then shows that $loc_v(c)\in H^1_f(K_v, T_n(f))$.\\
Finally, the equality (up to unit) in $(2)$ is proved in \cite[Proposition 3.9]{lon07} (see also \cite[Theorem 6.2]{wa15} for the Iwasawa-theoretic version and \cite[Proposition 2.8.3]{nek12} for a more general statement); besides Theorem \ref{raiselev}, it rests on the study of the bad reduction of the Shimura curve $X_{\pl}$ at the prime $\pl$, and on the description of its special fibre obtained via Cherednik-Drinfeld uniformisation \cite[Sections 1.4, 1.5]{nek12}.
\end{proof}

\subsection{Localisation of $c(\pl)$ at $\pl'\neq \pl$: the second reciprocity law.} The second ingredient in the Euler system argument we will use to prove Theorem \ref{mainthm} is a reciprocity law relating the localisation of $c(\pl)$ at an admissible prime $\pl'\neq \pl$ to a level raising of $g$ at the two primes $\pl, \pl'$.

\begin{teo}\label{serec}
Let $\pl'\neq \pl$ be an $n$-admissible prime not dividing $\mathfrak{D}_g$. Then
\begin{equation*}
loc_{\pl'}c(\pl)\in H^1_{ur}(K_{\pl'}, T_n(f))\simeq \Op/\varpi^n
\end{equation*}
and the following equality holds up to a $\varpi$-adic unit
\begin{equation*}
loc_{\pl'}c(\pl)= a(h)
\end{equation*}
where $h \in S^{B^{' \times}}(\Op/\varpi^n)$ is an admissible automorphic form on the quaternion algebra $B'$ of discriminant $\mathfrak{D}_g \pl \pl'$. Moreover the Hecke eigenvalues of $h$ for operators outside $\pl \pl '$ coincide with those of $g$, and $U_{\pl}h=\epsilon_\pl h$, $U_{\pl'}h=\epsilon_{\pl'}h$, where $\epsilon_{\pl} \in \{\pm 1 \}$ (resp. $\epsilon_{\pl'} \in \{\pm 1 \}$) is the number such that $N(\pl)+1 \equiv \epsilon_\pl \lambda_f(T_\pl) \pmod {\varpi^n}$ (resp. $N(\pl')+1 \equiv \epsilon_{\pl'} \lambda_f(T_{\pl'}) \pmod {\varpi^n}$).
\end{teo}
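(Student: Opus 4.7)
The plan is as follows. The first claim is essentially formal: since $\pl' \nmid \mathfrak{D}_g\pl$, the Shimura curve $X_\pl$ has good reduction at $\pl'$, so its Jacobian $J_\pl$ has good reduction there, and the Kummer image of $J_\pl(K_{\pl'})\otimes\Op$ in $H^1(K_{\pl'}, T_p(J_\pl)\otimes\Op/I_\pl) = H^1(K_{\pl'}, T_n(f))$ lies in the unramified subspace $H^1_{ur}(K_{\pl'}, T_n(f))\simeq \Op/\varpi^n$. Hence so does $loc_{\pl'}c(\pl)$.

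For the equality I would compute this unramified class as the image of the reduction $\bar Q_K$ of $Q_K$ modulo $\pl'$ inside $J_\pl(\bar k_{\pl'})/I_\pl$ under the isomorphism $H^1_{ur}(K_{\pl'}, T_n(f))\simeq T_n(f)/(Fr_{\pl'}-1) \simeq \Op/\varpi^n$. Since $\pl'$ is inert in $K$ by admissibility, each CM point $\sigma(P_K)$ reduces to a supersingular point in the special fibre of $X_\pl$ at $\pl'$. By a Ribet-style identification, carried out over totally real fields in \cite{lon07} along the lines of \cite[Section 5]{bd05}, this supersingular locus is in Hecke-equivariant bijection with the quaternionic double coset space $B'^\times\backslash\hat B'^\times/\hat F^\times\hat R'^\times$, where $B'$ is the totally definite quaternion algebra of discriminant $\mathfrak{D}_g\pl\pl'$ obtained from the indefinite algebra underlying $X_\pl$ by interchanging ramification at $\pl'$ and at the unique split archimedean place, and $R'\subset B'$ is a maximal order; under this bijection each $\bar{\sigma(P_K)}$ corresponds to $\hat{\iota}(\sigma)$ for a suitable $R'$-optimal embedding $\iota: K \hookrightarrow B'$. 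A second level-raising argument at $\pl'$, identical to the one used to prove Theorem \ref{raiselev} (the hypotheses of \cite[Theorem 1.1]{ma19} continue to hold, $\pl'$ being admissible), shows that $S^{B'^\times}(\Op)/(I_\pl, U_{\pl'}-\epsilon_{\pl'}, U_{\pl}-\epsilon_{\pl})$ is free of rank one over $\Op/\varpi^n$; any generator is an admissible eigenform $h$ with the required Hecke eigenvalues. Running the Hecke-equivariant supersingular identification through the computation of $loc_{\pl'}c(\pl)$ then rewrites this class, up to a $\varpi$-adic unit, as $\sum_{P}h(\hat{\iota}(P)) = a(h)$, where $P$ ranges over $K^\times\backslash \hat K^\times/\hat F^\times\hat{\mathcal{O}}_K^\times$.

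The main obstacle is the Hecke-equivariant description of the supersingular locus at the good-reduction prime $\pl'$ together with the compatibility of the reduction of CM points with the optimal embedding $\hat\iota$. Over totally real fields a rigorous implementation requires a careful study of integral models of the relevant Shimura curves and of their CM divisors at primes of good reduction; this is the heart of the argument in \cite{lon07}, adapting \cite{bd05}. A secondary technical point is to check that the composite identification $H^1_{ur}(K_{\pl'}, T_n(f))\simeq T_n(f)/(Fr_{\pl'}-1)\simeq\Op/\varpi^n$ and the $\Op/\varpi^n$-module structure placed on the quaternionic side via the chosen generator of $S^{B'^\times}(\Op)/I_h$ are compatible, so that the resulting equality is independent of spurious choices up to a $\varpi$-adic unit; this is the analogue at $\pl'$ of the first reciprocity law and can be treated in the same way.
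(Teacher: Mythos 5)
Your overall route is the same as the paper's, which defers to Longo's proof of \cite[Theorem 7.23]{lon12}: good reduction of $X_\pl$ at $\pl'$ gives the unramified containment, the CM points reduce to the supersingular locus because $\pl'$ is inert in $K$, and that locus is identified with $B'^\times\backslash\hat{B}^{'\times}/\hat{F}^\times\hat{R}^{'\times}$. However, there is a genuine gap in the way you produce $h$ and assert the equality up to a unit. You take $h$ to be an abstract generator of the rank-one quotient $S^{B'^\times}(\Op)/I_h\simeq\Op/\varpi^n$ supplied by multiplicity one. But the computation of $loc_{\pl'}c(\pl)$ does not a priori output this generator: it outputs the specific function $\tilde{h}$ on the quaternionic set obtained by taking Abel--Jacobi images of supersingular points. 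Multiplicity one only gives $\tilde{h}=u\cdot h$ for some $u\in\Op/\varpi^n$, and nothing in your argument prevents $u$ from being divisible by $\varpi$ (equivalently, $\tilde{h}\equiv 0\pmod{\varpi}$, so that $\tilde{h}$ need not be admissible). If $u$ is not a unit, your conclusion degenerates to a divisibility between $loc_{\pl'}c(\pl)$ and $a(h)$, which is not enough for the Euler system argument of Section 7.

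The missing input is exactly what the paper flags as ``the key point'': the surjectivity of the geometrically defined map onto the relevant eigenspace of mod $\varpi^n$ automorphic forms on $B'$, which is \cite[Lemma 7.20]{lon12} (alternatively \cite[Proposition 4.8]{liti17}) and rests on Ihara's lemma for Shimura curves over totally real fields, established in \cite{mash19} under the large image hypothesis. Your appeal to a ``second level-raising argument identical to Theorem \ref{raiselev}'' only reproves multiplicity one via \cite{ma19}; it does not address this surjectivity, and your list of main obstacles (integral models, reduction of CM points, compatibility of normalisations) omits it entirely. Once Ihara's lemma is invoked, $\tilde{h}$ is itself a generator, one may take $h=\tilde{h}$, and the rest of your computation goes through as in the paper.
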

\begin{proof}
The proof goes as in \cite[Theorem 7.23]{lon12}, the only difference being that we are working with just one $CM$ point and not with a compatible tower of such. We point out that to construct the form $h$ one uses the fact that the supersingular locus in the special fibre of $X_\pl$ at $\pl'$ can be identified with $B'^\times\backslash \hat{B}^{' \times}/\hat{F}^\times\hat{R}^{'\times}$, where $R'\subset B'$ is a maximal order. Tha map $h$ is then constructed essentially taking the Abel-Jacobi image of points in the supersingular locus. The key point is to show that the map one obtaines is \emph{surjective}, which is \cite[Lemma 7.20]{lon12}. A different proof is given in \cite[Proposition 4.8]{liti17}; both proofs rely crucially on Ihara lemma for Shimura curves, which has been established over totally real fields in \cite{mash19} under our large image assumption (notice that the authors of \emph{loc. cit.} work with sufficiently small compact open subgroups throughout the paper, but remark in \cite[Remark 6.7]{mash19} that this assumption is not necessary in order for the result to hold).
\end{proof}

\begin{rem}\label{truerec}
The above theorem and the multiplicity one result in \cite{ma19} imply the following equality (up to unit), which will be used repeatedly later:
\begin{equation}\label{truerep}
loc_{\pl'}c(\pl)= loc_{\pl}c(\pl')
\end{equation}
for every couple of distinct admissible primes $\pl, \pl'$ not dividing $\mathfrak{D}_g$.
\end{rem}

\section{The Euler system argument}\label{esys}

\subsection{} We will now run the Euler system argument which proves Theorem \ref{mainthm}. We therefore start with an admissible form $g \in S^{B^\times}(\Op/\varpi^n)$ such that $n=2k$ and $a(g)\not \equiv 0 \pmod {\varpi^k}$. The main idea in the proof is to raise the level of $g$ at \emph{two} well-chosen admissible primes $\pl_1, \pl_2$, and construct an admissible automorphic form $h \in S^{B'^\times}(\Op/\varpi^n)$, where $B'$ has discriminant $\mathfrak{D}_h=\mathfrak{D}_g\pl_1\pl_2$, such that $ord_{\varpi}(a(h))<ord_{\varpi}(a(g))$ and we have
\begin{equation*}
2ord_{\varpi}(a(g_k))- 2ord_{\varpi}(a(h_k))= l_{\Op} Sel_{(\mathfrak{D}_g/\n)}(K, A_k(f))-l_{\Op} Sel_{(\mathfrak{D}_h/\n)}(K, A_k(f)).
\end{equation*}
One is thus reduced to prove the (in)equality in the case when $a(g)$ is a unit, which follows from the first reciprocity law.

\subsection{}\label{pastworks} This level raising-length lowering method is a refinement of ideas already used, for other purposes, by Wei Zhang in \cite{zha14} (as well as in \cite{pw11}). We will also make use in the first steps of our argument of a few lemmas essentially borrowed from \cite{how06}. A similar strategy is employed in the preprint \cite{blv16} as well: in particular a version of the important Lemma \ref{keylem} is also proved in \emph{loc. cit.} and used to perform an inductive process akin to ours. However in our argument we do not need to make use of parity results nor freeness results for certain Selmer groups which are employed in \cite{blv16}.

\subsection{} Let us first record a lemma which guarantees that there are sufficiently many admissible primes to detect whether a cohomology class in $H^1(K, A_1(f))$ is non zero. It was stated by Longo (\hspace{1sp}\cite[Theorem 4.3]{lon07}, \cite[Proposition 7.5]{lon12}) and proved by Wang \cite[Theorem 7.2]{wa15}. The proof relies on Chebotarev density theorem and the key point is that, in view of our large image assumption and \cite[Proposition 3.9]{dim05}, the image of the Galois representation $\bar{\rho}: \Gamma_F \rightarrow Aut(A_1(f))\simeq GL_2(\Op/\varpi)$ contains a matrix with eigenvalues $\lambda, \delta$ with $\lambda \neq \pm 1$ and $\delta \in \{\pm 1\}$ (notice that here we need that $p>3$).

\begin{lem}\label{locnonzero}
Let $c \in H^1(K, A_1(f))$ be a non zero class and $n \geq 1$. There are infinitely many $n$-admissible primes $\pl$ such that $loc_{\pl} (c) \neq 0$.
\end{lem}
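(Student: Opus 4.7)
The plan is to combine the standard inflation-restriction reduction with a Chebotarev density argument in a Galois extension of $F$ that sees both the residual representation (so that Frobenius eigenvalues can be prescribed for $n$-admissibility) and the cohomology class $c$ (so that non-vanishing of its localisation is detected).

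First, let $L$ be the Galois closure over $F$ of $K(A_1(f), \mu_p)$. The hypothesis $SL_2(\mathbf{F}_p) \subseteq \bar\rho(\Gamma_F)$ combined with $p > 3$ gives $H^1(\mathrm{Gal}(L/K), A_1(f)) = 0$ (standard vanishing for $SL_2(\mathbf{F}_p)$ with the standard module); inflation-restriction then provides an injection
\[H^1(K, A_1(f)) \hookrightarrow \mathrm{Hom}_{\mathrm{Gal}(L/K)}(\mathrm{Gal}(\bar{K}/L)^{ab}, A_1(f)).\]
Let $\psi$ be the image of $c$. It is nonzero, and by irreducibility of $A_1(f)$ as a $\mathrm{Gal}(L/K)$-module it is surjective. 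Let $M \supset L$ be the fixed field of $\ker \psi$; then $M/F$ is Galois, $\mathrm{Gal}(M/L) \simeq A_1(f)$ as $\mathrm{Gal}(L/F)$-modules, and $\mathrm{Gal}(M/F) = A_1(f) \rtimes \mathrm{Gal}(L/F)$.

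Next I would use the large image hypothesis (from which the surjectivity $\bar\rho(\Gamma_F) = GL_2(\mathbf{F}_p)$ follows, since $\det\bar\rho = \chi_{cyc}$ is onto for $p > 3$ and $F$ totally real) together with \cite[Proposition 3.9]{dim05} to pick $\tau_0 \in \mathrm{Gal}(L/F)$ with: (i) $\tau_0|_K \neq 1$, so that primes with Frobenius conjugate to $\tau_0$ are inert in $K/F$; (ii) matrix $\mathrm{diag}(\lambda, 1)$ on $A_1(f)$ for some $\lambda \neq \pm 1$. Both values of $\delta \in \{\pm 1\}$ in the hint's matrix are accessible (the $\delta = -1$ case being converted to $\delta = 1$ by multiplying by an element of $\Gamma_K$ whose $\bar\rho$-image is $-1 \in SL_2(\mathbf{F}_p)$), and I select $\delta = 1$. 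A lift $\tau_n \in \mathrm{Gal}(F(A_n(f), \mu_{p^n})/F)$ with the same eigenvalues modulo $\varpi^n$ exists by propagating $SL_2(\mathbf{F}_p)$-largeness to $SL_2(\Op/\varpi^n)$-largeness in the mod-$\varpi^n$ representation. Now, inside $\mathrm{Gal}(M/F)$, choose a $1$-eigenvector $v \in A_1(f)$ of $\tau_0$ and set $\tilde F_\pl = v \cdot \tau_0$. Letting $f_L$ denote the order of $\tau_0^2$ in $\mathrm{Gal}(L/K)$ (a divisor of $p - 1$, hence coprime to $p$), the semidirect product rule yields
\[\tilde F_\pl^{\,2f_L} = \Bigl(\sum_{i=0}^{2f_L - 1}\tau_0^i\Bigr)(v) = 2 f_L \cdot v,\]
which is nonzero in $A_1(f)$ since $p \nmid 2f_L$. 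Chebotarev density applied to the compositum $M \cdot F(A_n(f), \mu_{p^n})$ over $F$ then yields infinitely many primes $\pl$ of $F$ whose Frobenius is conjugate to a suitable lift of $\tilde F_\pl$. Such $\pl$ are $n$-admissible by construction (unramified outside a finite set, inert in $K/F$ by (i), and with prescribed Frobenius eigenvalues on $T_n(f)$ realising condition (4)), and the non-vanishing of the $A_1(f)$-component of the squared Frobenius translates, via $\mathrm{Gal}(M/L) \simeq A_1(f)$, into $\psi(\mathrm{Frob}_v) \neq 0$ for any prime $v$ of $L$ above $\pl\mathcal{O}_K$. Local inflation-restriction at $\pl$ then forces $loc_\pl(c) \neq 0$.

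The main obstacle is that, since $\pl$ is inert in $K$, one must evaluate $\psi$ on the \emph{squared} Frobenius, and the corresponding operator $\sum_{i=0}^{2f_L-1}\tau_0^i$ vanishes on every eigenspace of $\tau_0$ with eigenvalue $\neq 1$. This is exactly why the hint's $\delta \in \{\pm 1\}$ must be refined to $\delta = 1$, and in turn why the full $GL_2$-surjectivity of $\bar\rho$ (not just the $SL_2$-containment from the hypothesis) has to be invoked. The secondary issue of realising admissibility modulo $\varpi^n$ rather than just modulo $\varpi$ is routine, being handled by the $SL_2(\Op/\varpi^n)$-largeness of the mod-$\varpi^n$ image.
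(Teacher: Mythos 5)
Your plan captures the right skeleton (inflation--restriction, pass to a homomorphism $\psi$ on $\mathrm{Gal}(\bar K/L)$, cut out $M$, choose $\tau_0$ with eigenvalues $\lambda,\delta$ with $\lambda\neq\pm1$ and $\delta\in\{\pm1\}$, Chebotarev, average over powers of $\tau_0$), which is indeed the standard method behind the reference to Wang the paper gives. But there is a genuine gap precisely where you decide to ``refine $\delta\in\{\pm1\}$ to $\delta=1$''.

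The class $c$ lives in $H^1(K,A_1(f))$, not in $H^1(F,A_1(f))$. Consequently $\psi=\mathrm{res}_L(c)$ is only $\mathrm{Gal}(L/K)$-equivariant, so $\ker\psi$ is a priori normal in $\Gamma_K$ but \emph{not} in $\Gamma_F$; your assertions ``$M/F$ is Galois'' and ``$\mathrm{Gal}(M/L)\simeq A_1(f)$ as $\mathrm{Gal}(L/F)$-modules'' are unjustified as stated. The standard fix (using $p$ odd) is to decompose $c=c^++c^-$ into eigencomponents for the action of $\mathrm{Gal}(K/F)$ and replace $c$ by a nonzero $c^\epsilon$ ($\epsilon\in\{\pm1\}$). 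Then $\sigma\cdot\psi^\epsilon=\epsilon\,\psi^\epsilon$ for $\sigma\in\Gamma_F\setminus\Gamma_K$, so $\ker\psi^\epsilon\triangleleft\Gamma_F$ and $M/F$ is Galois, \emph{but} $\mathrm{Gal}(M/L)\simeq A_1(f)\otimes\epsilon$ with a sign twist. This twist enters the averaging operator: conjugation by a lift of $\tau_0$ acts on $\mathrm{Gal}(M/L)$ by $\epsilon\,\bar\rho(\tau_0)$, so $\sum_{i<m}(\epsilon\,\bar\rho(\tau_0))^i$ is nonzero on the line $\langle v\rangle$ (the $\delta$-eigenspace of $\tau_0$, i.e.\ the $1$-eigenspace of $\tau_0^2$) exactly when $\epsilon\delta=1$. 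At the same time, $\sigma=\mathrm{Frob}_{F,\pl}$ acts by $\delta$ on the one-dimensional $H^1_{ur}(K_{\pl_K},A_1(f))$, so $loc_\pl(c)=loc_\pl(c^\delta)$ and you detect precisely the $\delta$-component. Everything fits together only if $\delta=\epsilon$, i.e.\ you must take $\delta$ to be the sign for which $c^\epsilon\neq 0$. Insisting on $\delta=1$ fails whenever $c^+=0$ but $c^-\neq0$. The paper's phrasing ``$\delta\in\{\pm1\}$'' is exactly the freedom you need; it should not be collapsed.

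Two smaller points worth recording. First, the Chebotarev step over the compositum $M\cdot F(A_n(f),\mu_{p^n})$ needs the compatibility of the two prescribed Frobenius images on the intersection; this holds because $\mathrm{Gal}(M/L)$ is the standard $\mathrm{Gal}(L/F)$-module (up to twist), which does not occur as a subquotient of $\ker\bigl(GL_2(\Op/\varpi^n)\to GL_2(\Op/\varpi)\bigr)$ (whose graded pieces are adjoint), forcing $M\cap F(A_n(f),\mu_{p^n})=L$. Second, your inference $\bar\rho(\Gamma_F)=GL_2(\mathbf F_p)$ is correct but for a different reason than ``$p>3$ and $F$ totally real'': it is $p$ unramified in $F$ that gives $F\cap\Q(\mu_p)=\Q$ and hence surjectivity of $\chi_{cyc}|_{\Gamma_F}$.
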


\begin{corol}\label{lociso}
Let $C\subset Sel_{(\mathfrak{D}_g/\n)}(K, A_n(f))$ be a submodule isomorphic to $\Op/\varpi^n$. Then there exist infinitely many $n$-admissible primes $\pl$ such that $loc_{\pl}: Sel_{(\mathfrak{D}_g/\n)}(K, A_n(f))\rightarrow H^1_{ur}(K_\pl, A_n(f))$ is an isomorphism when restricted to $C$.
\end{corol}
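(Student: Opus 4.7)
The plan is to reduce the statement to Lemma \ref{locnonzero} applied to a suitable mod-$\varpi$ reduction of a generator of $C$. Fix a generator $c$ of $C$, so that $c$ has order exactly $\varpi^n$ in $H^1(K, A_n(f))$. Consider the map $\pi_* : H^1(K, A_n(f)) \to H^1(K, A_1(f))$ induced by the canonical surjection $\pi : A_n(f) \twoheadrightarrow A_n(f)/\varpi A_n(f) = A_1(f)$, and set $\bar c := \pi_*(c)$. This class will be the input to Lemma \ref{locnonzero}.

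The main preliminary step is to verify that $\bar c \neq 0$. This rests on the short exact sequence
\[
0 \to A_{n-1}(f) \xrightarrow{\varpi} A_n(f) \xrightarrow{\pi} A_1(f) \to 0
\]
together with the vanishing $H^0(K, A_1(f)) = 0$. The latter follows from assumption (2) of Theorem \ref{thbkdef}: since $p > 3$ the group $SL_2(\mathbf{F}_p)$ is perfect and hence has no subgroup of index two, so the image of $\Gamma_K$ still contains $SL_2(\mathbf{F}_p)$, and no non-zero vector of $A_1(f) \simeq \mathbf{F}_p^2$ can then be $\Gamma_K$-fixed. The long exact sequence identifies $\ker(\pi_*)$ with the image of $H^1(K, A_{n-1}(f))$ in $H^1(K, A_n(f))$, which is annihilated by $\varpi^{n-1}$. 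Since $c$ has order $\varpi^n$, we deduce $c \notin \ker(\pi_*)$, hence $\bar c \neq 0$.

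Now apply Lemma \ref{locnonzero} to $\bar c$ to produce infinitely many $n$-admissible primes $\pl$ with $loc_\pl(\bar c) \neq 0$; discarding a finite set we may in addition require $\pl \nmid \mathfrak{D}_g$. For any such $\pl$, the combination $c \in Sel_{(\mathfrak{D}_g/\n)}(K, A_n(f))$ with $\pl \nmid \mathfrak{D}_g \n p$ forces $loc_\pl(c) \in H^1_f(K_\pl, A_n(f)) = H^1_{ur}(K_\pl, A_n(f)) \simeq \Op/\varpi^n$ by Lemma \ref{structadm}(2). Functoriality of the localization map yields $\pi_*(loc_\pl(c)) = loc_\pl(\bar c) \neq 0$ inside $H^1_{ur}(K_\pl, A_1(f)) \simeq \Op/\varpi$, so the reduction mod $\varpi$ of $loc_\pl(c) \in \Op/\varpi^n$ is non-zero; equivalently, $loc_\pl(c)$ is a generator of $H^1_{ur}(K_\pl, A_n(f))$. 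Consequently $loc_\pl|_C : C \to H^1_{ur}(K_\pl, A_n(f))$, being multiplication by a unit of $\Op/\varpi^n$, is an isomorphism.

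The only non-formal ingredient is the nonvanishing of $\bar c$, which hinges on the large image hypothesis together with the annihilator mismatch between the class $c$ (order $\varpi^n$) and $\ker(\pi_*)$ (annihilated by $\varpi^{n-1}$); once this is in hand the rest is a mechanical unwinding of Lemma \ref{locnonzero} and the local structure at admissible primes provided by Lemma \ref{structadm}.
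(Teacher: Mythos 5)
Your proof is correct, and it follows essentially the same route as the paper: produce a nonzero class in $H^1(K, A_1(f))$ from the generator $c$ of $C$, feed it into Lemma~\ref{locnonzero}, then observe that nonvanishing of the localisation at the mod-$\varpi$ level forces $loc_\pl(c)$ to generate $H^1_{ur}(K_\pl, A_n(f))\simeq \Op/\varpi^n$. The only cosmetic difference is in how the mod-$\varpi$ class is produced: the paper passes to $\varpi^{n-1}c \in Sel(K, A_n(f))[\varpi]$ and invokes the control result (Proposition~\ref{cnt}), whereas you push $c$ forward along the reduction $\pi:A_n(f)\twoheadrightarrow A_1(f)$ and use the long exact sequence associated to $0\to A_{n-1}(f)\to A_n(f)\to A_1(f)\to 0$. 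Both are fine. One small remark: your appeal to $H^0(K, A_1(f))=0$ is actually superfluous for the step where you use it. Exactness of $H^1(K, A_{n-1}(f))\to H^1(K, A_n(f))\xrightarrow{\pi_*} H^1(K, A_1(f))$ already identifies $\ker(\pi_*)$ with the image of $H^1(K, A_{n-1}(f))$, which is killed by $\varpi^{n-1}$ simply because $A_{n-1}(f)$ is; no information about $H^0$ enters. (The $H^0$-vanishing is what underlies Proposition~\ref{cnt}, so the paper's version uses it, but your formulation does not need it.)
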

\begin{proof}
Let us denote $A_n(f)$ by $M$; Let $c$ be a generator of $C$. Then $\varpi^{n-1}c \in Sel(K, M)[\varpi]=Sel(K, M[\varpi])$ is non zero, hence by Lemma \ref{locnonzero} there are infinitely many $n$-admissible primes $\pl$ (not dividing $\mathfrak{D}_g$) such that $loc_{\pl}(\varpi^{n-1}c)\neq 0$. For such a $\pl$ the localisation map $loc_{\pl}: C\rightarrow H^1_{ur}(K_\pl, M)\simeq \Op/\varpi^n$ is injective, hence an isomorphism.
\end{proof}

\subsection{} Let us now show that the first reciprocity law and the assumption that $a(g)$ does not vanish yield a weak annihilation result for the Selmer group. A similar result is proven in \cite[Proposition 2.3.5]{how06}.

\begin{prop}\label{weakannihil}
The $\Op/\varpi^n$-module $Sel_{(\mathfrak{D}_g/\n)}(K, A_n(f))$ is killed by $\varpi^{n-1}$; similarly, the $\Op/\varpi^k$-module $Sel_{(\mathfrak{D}_g/\n)}(K, A_k(f))$ is killed by $\varpi^{k-1}$.
\end{prop}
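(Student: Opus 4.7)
The plan is to argue by contradiction, combining the first reciprocity law with Poitou-Tate global duality. Suppose $s \in Sel_{(\mathfrak{D}_g/\n)}(K, A_n(f))$ is not killed by $\varpi^{n-1}$; then $s$ has exact order $\varpi^n$ in $A_n(f)$, so the cyclic submodule $\Op \cdot s$ is free of rank one over $\Op/\varpi^n$. By Corollary \ref{lociso}, I can find an $n$-admissible prime $\pl \nmid \mathfrak{D}_g$ such that $loc_\pl$ restricts to an isomorphism $\Op \cdot s \xrightarrow{\sim} H^1_{ur}(K_\pl, A_n(f))$; in particular $loc_\pl(s)$ is a generator of this cyclic $\Op/\varpi^n$-module.

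Next I would pair $s$ against the level-raised cohomology class $c(\pl) \in Sel_{(\mathfrak{D}_g\pl/\n)}(K, T_n(f))$ by Poitou-Tate global reciprocity, obtaining
\begin{equation*}
\sum_{v} \langle loc_v(c(\pl)), loc_v(s) \rangle_v = 0,
\end{equation*}
where each term is the local Tate pairing valued in $\Op/\varpi^n$. The core of the argument is to check place by place that every term away from $\pl$ vanishes. At places $v \nmid \n p\, \mathfrak{D}_g \pl$ both classes are unramified and $H^1_{ur}$ is self-orthogonal. At $v \mid \n$ and $v \mid p$ both classes lie in $H^1_f$, and the Bloch-Kato local conditions on $T_n(f)$ and $A_n(f)$ are annihilators of each other by the discussion in section \ref{selgps} and the explicit description of Proposition \ref{nmprop}. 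At $v \mid \mathfrak{D}_g/\n$ both localisations are transverse by construction; using Lemma \ref{structadm}, the admissibility condition $N(v) \not\equiv \pm 1 \pmod \varpi$ forces the self-duality pairing on $T_n(f)$ to be off-diagonal in the splitting $T_n(f) \simeq \Op/\varpi^n \oplus \Op/\varpi^n(1)$ (the other pairings being excluded by $\Gamma_{K_v}$-equivariance), which in turn makes $H^1_{tr}$ self-orthogonal under local Tate duality. Hence only the term at $\pl$ survives, giving $\langle loc_\pl(c(\pl)), loc_\pl(s) \rangle_\pl = 0$.

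By the first reciprocity law, $loc_\pl(c(\pl)) \in H^1_{tr}(K_\pl, T_n(f)) \simeq \Op/\varpi^n$ corresponds, up to a $\varpi$-adic unit, to $a(g)$. The local Tate pairing $H^1_{tr}(K_\pl, T_n(f)) \times H^1_{ur}(K_\pl, A_n(f)) \to \Op/\varpi^n$ is a perfect duality of free rank-one $\Op/\varpi^n$-modules and, after fixing generators, becomes plain multiplication in $\Op/\varpi^n$. Since $loc_\pl(s)$ is a unit generator by our choice of $\pl$, the vanishing above forces $a(g) \equiv 0 \pmod{\varpi^n}$, contradicting the hypothesis $a(g) \not\equiv 0 \pmod{\varpi^k}$ (which in particular implies $a(g) \neq 0$ in $\Op/\varpi^n$). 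The corresponding statement for $Sel_{(\mathfrak{D}_g/\n)}(K, A_k(f))$ is obtained by the same argument applied to the reduction $c(\pl)_k \in Sel_{(\mathfrak{D}_g\pl/\n)}(K, T_k(f))$ of $c(\pl)$ modulo $\varpi^k$, whose localisation at $\pl$ equals $a(g_k)$ up to unit, still nonzero in $\Op/\varpi^k$ by hypothesis.

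The main technical hurdle is the place-by-place verification of the local orthogonality, and in particular the self-orthogonality of the transverse condition at primes dividing $\mathfrak{D}_g/\n$: this uses in an essential way the precise structure of the local Galois representation at admissible primes from Lemma \ref{structadm} together with the off-diagonal nature of the self-duality pairing on $T_n(f)$ in the resulting direct-sum decomposition.
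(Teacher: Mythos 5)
Your argument is correct and follows essentially the same route as the paper's: argue by contradiction, use Corollary \ref{lociso} to pick an $n$-admissible $\pl$ making $loc_\pl$ an isomorphism on a free rank-one submodule, apply global duality, and contradict the first reciprocity law $loc_\pl(c(\pl))=a(g)$. The only difference is one of packaging: the paper invokes the Mazur--Rubin form of global duality (\cite[Theorem 2.3.4]{mr04}, stating the images of the two localisation maps in the dual exact sequences are exact annihilators) as a black box and concludes at once that $loc_\pl$ is zero on $Sel^\pl_{(\mathfrak{D}_g/\n)}(K, T_n(f))$, whereas you unwind it to the underlying Poitou--Tate sum formula $\sum_v\langle loc_v(c(\pl)),loc_v(s)\rangle_v=0$ and verify place-by-place orthogonality (including the self-orthogonality of $H^1_{tr}$ at primes dividing $\mathfrak{D}_g/\n$, which you correctly derive from the off-diagonal self-duality pairing in the decomposition of Lemma \ref{structadm}). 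Both are sound; your version is more explicit but proves exactly the same thing.
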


\begin{proof}
Let us prove the first statement; the second one is proved in the same way. Suppose by contradiction that there exists $c \in Sel_{(\mathfrak{D}_g/\n)}(K, A_n(f))$ which generates a submodule $C\simeq \Op/\varpi^n$. By Proposition \ref{lociso} we can choose $\pl\nmid \mathfrak{D}_g$ $n$-admissible such that $loc_{\pl}: C\rightarrow H^1_{ur}(K_\pl, A_n(f))\simeq \Op/\varpi^n$ is an isomorphism. In particular $loc_{\pl}: Sel_{(\mathfrak{D}_g/\n)}(K, A_n(f))\rightarrow H^1_{ur}(K_\pl, A_n(f))$ is surjective. We have two exact sequences:
\begin{align*}
0\rightarrow Sel_{(\mathfrak{D}_g/\n)\pl}(K, A_n(f))\rightarrow Sel_{(\mathfrak{D}_g/\n)}(K, A_n(f))\xrightarrow{loc_\pl}H^1_{ur}(K_{\pl}, A_n(f))\\
0\rightarrow Sel_{(\mathfrak{D}_g/\n)}(K, T_n(f))\rightarrow Sel_{(\mathfrak{D}_g/\n)}^{\pl}(K, T_n(f))\xrightarrow{loc_\pl}H^1_{tr}(K_\pl, T_n(f)).
\end{align*}
By global duality (\hspace{1sp}\cite[Theorem 2.3.4]{mr04}) the images of the two localisations maps are annihilators of each other. Since $loc_{\pl}: Sel_{(\mathfrak{D}_g/\n)}(K, A_n(f))\rightarrow H^1_{ur}(K_\pl, A_n(f))$ is surjective and the pairing
\begin{equation*}
H^1_{ur}(K_{\pl}, A_n(f))\times H^1_{tr}(K_\pl, T_n(f)) \rightarrow \Op/\varpi^n
\end{equation*}
is perfect we deduce that
\begin{equation*}
loc_{\pl}: Sel_{(\mathfrak{D}_g/\n)}^{\pl}(K, T_n(f))\rightarrow H^1_{tr}(K_\pl, T_n(f))
\end{equation*}
is the zero map. In particular $loc_{\pl}(c(\pl))=0$. But by the first reciprocity law $loc_\pl(c(\pl))=a(g)$ and $a(g)$ is non zero by hypothesis, which gives a contradiction.
\end{proof}

\begin{corol}(cf. \cite[Corollary 2.2.10, Remark 2.2.11]{how06})\label{changepar}
\begin{enumerate}
\item There exists an $\Op/\varpi^n$-module $N$ such that
\begin{equation*}
Sel_{(\mathfrak{D}_g/\n)}(K, A_n(f))=N\oplus N.
\end{equation*}
\item There exists an $\Op/\varpi^n$-module $N'$ such that
\begin{equation*}
Sel_{(\mathfrak{D}_g\pl/\n)}(K, A_n(f))=\Op/\varpi^n\oplus N'\oplus N'.
\end{equation*}
\end{enumerate}
\end{corol}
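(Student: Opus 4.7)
The plan is to mirror the argument of \cite[Corollary 2.2.10, Remark 2.2.11]{how06}, which rests on a Cassels-Tate type pairing on self-dual Selmer groups. The self-duality $T(f) \simeq T(f)^*(1)$ together with cup product and local Tate duality produces, at each place $v$ of $K$, a perfect pairing
\begin{equation*}
H^1(K_v, A_n(f)) \times H^1(K_v, A_n(f)) \to \Op/\varpi^n;
\end{equation*}
since the pairing on $A_n(f)$ is skew-symmetric while cup product in degree one is antisymmetric, the induced pairing on $H^1$ is symmetric.

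The first step is to verify that every local condition defining our Selmer groups is Lagrangian (maximal isotropic) under this pairing. At places of good reduction the unramified condition is standard; at primes $\q \mid \n$ the condition of Proposition \ref{nmprop}, realised as the image of $H^1(K_\q, A_n^\q(f))$ with $A_n^\q(f) \simeq \Op/\varpi^n(1)$, is Lagrangian because the ordinary filtration $\Op(1) \hookrightarrow T \twoheadrightarrow \Op$ is isotropic for the self-duality pairing; at primes above $p$ the flat condition is Lagrangian by Proposition \ref{condlocpflat}; and at $n$-admissible primes both $H^1_{ur}$ and $H^1_{tr}$ are Lagrangian in the hyperbolic plane of Lemma \ref{structadm}. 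With Lagrangian local conditions in place, global Poitou-Tate duality induces a perfect self-pairing on each Selmer group, and the linear algebra classification underlying \cite[\S 2.2]{how06} then delivers the dichotomy: a finite $\Op/\varpi^n$-module with such a pairing decomposes (for $p$ odd) either as $N \oplus N$ or as $\Op/\varpi^n \oplus N' \oplus N'$, the two cases being distinguished by a parity invariant $\epsilon \in \{0,1\}$; non-degeneracy of the pairing forces the lone anisotropic summand in the second case to have length exactly $n$.

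It remains to identify which parity occurs in each of the two cases. The invariant $\epsilon$ is additive over local contributions and flips whenever a single local Lagrangian is swapped for its complementary one; consequently $Sel_{(\mathfrak{D}_g/\n)}(K, A_n(f))$ and $Sel_{(\mathfrak{D}_g\pl/\n)}(K, A_n(f))$ necessarily have opposite parities, since they differ only at $\pl$ (where $H^1_{ur}$ is swapped for $H^1_{tr}$). To pin down which one is even, I would trace the parity back to the baseline case $\mathfrak{D}_g = \n$, where the parity invariant matches the $+1$ sign of the functional equation of $L(f_K, s)$ in the definite setting; admissible primes then enter $\mathfrak{D}_g$ in pairs through the level-raising construction of $g$ (each addition performing two Lagrangian swaps), preserving parity and giving $\epsilon = 0$ for $Sel_{(\mathfrak{D}_g/\n)}$. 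Part (2) follows by flipping parity at the single additional prime $\pl$.

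The main obstacle I foresee is this parity bookkeeping: computing local parity contributions precisely and matching the global invariant to the sign of the functional equation requires care with the sign conventions in the cup product and local Tate pairings. By contrast, the Lagrangian verifications are essentially routine consequences of the local duality compatibilities recorded in Section \ref{selgps}, and the structural dichotomy itself is a standard piece of linear algebra over $\Op/\varpi^n$ once the self-pairing is in hand.
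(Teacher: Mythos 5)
Your structural dichotomy and your parity-flip computation are both sound and match the paper: the abstract statement that a Selmer group with these self-dual local conditions has the shape $(\Op/\varpi^n)^\epsilon\oplus N\oplus N$ is exactly what the paper imports from Howard's structure theorem, and your observation that swapping $H^1_{ur}$ for $H^1_{tr}$ at the single prime $\pl$ flips the parity is precisely how the paper deduces part (2) from part (1), via the two global-duality exact sequences.

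The gap is in how you pin down the baseline parity $\epsilon=0$ for $Sel_{(\mathfrak{D}_g/\n)}(K, A_n(f))$. You propose to trace it back to the case $\mathfrak{D}_g=\n$ and match the global invariant to the sign $+1$ of the functional equation. That identification is not a piece of sign bookkeeping: it \emph{is} the parity conjecture, a deep arithmetic statement that cannot be extracted from the local Lagrangian structure alone. Indeed, the paper's final subsection shows that this very level-raising argument only reduces the parity conjecture to the sign conjecture ($Sel(K,A(f))\neq 0$ when $\epsilon(f_K)=-1$); it does not prove the base case, and no purely formal argument can, since the dichotomy is symmetric in the two parities. The paper's actual route to $\epsilon=0$ is different and uses the Euler system: Proposition \ref{weakannihil} shows that $Sel_{(\mathfrak{D}_g/\n)}(K, A_n(f))$ is killed by $\varpi^{n-1}$ — this rests on the class $c(\pl)$, the first reciprocity law $loc_\pl(c(\pl))=a(g)$, and the hypothesis $a(g)\neq 0$ — and a module killed by $\varpi^{n-1}$ admits no free $\Op/\varpi^n$ summand, forcing $\epsilon=0$. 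This arithmetic input (nonvanishing of the algebraic special value, fed in through the reciprocity law) is essential and is what your proposal is missing; without it the corollary as stated could fail, e.g.\ the two cases in (1) and (2) could be interchanged.
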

\begin{proof}
The first point follows immediately from the previous proposition and the structure theorem for Selmer groups \cite[Proposition 2.2.7]{how06}. In order to prove $(2)$, it is enough to show that the dimensions $\mathrm{dim}_{\Op/\varpi}Sel_{(\mathfrak{D}_g/\n)}(K, A_1(f))$ and $\mathrm{dim}_{\Op/\varpi}Sel_{(\mathfrak{D}_g\pl/\n)}(K, A_1(f))$ do not have the same parity. To prove this we argue as follows: we have two exact sequences
\begin{align*}
0\rightarrow Sel_{(\mathfrak{D}_g/\n)\pl}(K, A_1(f))\rightarrow Sel_{(\mathfrak{D}_g/\n)}(K, A_1(f))\xrightarrow{loc_\pl}H^1_{ur}(K_{\pl}, A_1(f))\\
0\rightarrow Sel_{(\mathfrak{D}_g/\n)}(K, A_1(f))\rightarrow Sel_{(\mathfrak{D}_g/\n)}^{\pl}(K, A_1(f))\xrightarrow{loc_\pl}H^1_{tr}(K_\pl, A_1(f)).
\end{align*}

By global duality if the upper localisation map is non zero then the bottom one is zero, hence in this case we obtain
\begin{equation*}
Sel_{(\mathfrak{D}_g/\n)}(K, A_1(f))=Sel_{(\mathfrak{D}_g/\n)}^{\pl}(K, A_1(f))
\end{equation*}
therefore
\begin{equation*}
Sel_{(\mathfrak{D}_g\pl/\n)}(K, A_1(f))=Sel_{(\mathfrak{D}_g/\n)\pl}(K, A_1(f)).
\end{equation*}
Hence
\begin{align*}
\mathrm{dim}Sel_{(\mathfrak{D}_g/\n)}(K, A_1(f))-\mathrm{dim}Sel_{(\mathfrak{D}_g\pl/\n)}(K, A_1(f))&=\\
\mathrm{dim}Sel_{(\mathfrak{D}_g/\n)}(K, A_1(f))-\mathrm{dim}Sel_{(\mathfrak{D}_g/\n)\pl}(K, A_1(f))&=1.
\end{align*}
If the upper localisation map is zero then the bottom one is non zero and one argues similarly.
\end{proof}

\begin{prop}(cf. \cite[Lemma 3.3.6]{how06})\label{freesys}
There exists a free $\Op/\varpi^k$-submodule of rank one of $Sel_{(\mathfrak{D}_g\pl/\n)}(K, T_k(f))$ which contains (the reduction modulo $\varpi^k$ of) $c(\pl)$.
\end{prop}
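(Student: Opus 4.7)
The plan is to combine the first reciprocity law with the Bockstein map $\iota:H^1(K,T_k(f))\hookrightarrow H^1(K,T_n(f))$ induced by the short exact sequence $0\to T_k(f)\xrightarrow{\varpi^k}T_n(f)\xrightarrow{\mathrm{red}}T_k(f)\to 0$, exploiting crucially that $n=2k$. The map $\iota$ is injective because $H^0(K,T_1(f))=0$ by the large image assumption; it restricts to the relevant Selmer groups since the Selmer conditions at each place propagate compatibly under multiplication by $\varpi^k$; and it satisfies the key identity $\iota\circ\mathrm{red}=\varpi^k\cdot$ on $H^1(K,T_n(f))$.

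First I would compute the order of $loc_\pl(c(\pl))$. By the first reciprocity law this equals $u\cdot a(g)$ in $H^1_{tr}(K_\pl,T_n(f))\simeq \Op/\varpi^n$ for some $\varpi$-adic unit $u$. Writing $\alpha=ord_\varpi(a(g))$, the hypothesis $a(g_k)\neq 0$ in $\Op/\varpi^k$ forces $\alpha\le k-1$, so $loc_\pl(c(\pl))$ has order exactly $2k-\alpha\ge k+1$ in $\Op/\varpi^n$. Consequently $c(\pl)$ itself has order at least $2k-\alpha$ in $Sel_{(\mathfrak{D}_g\pl/\n)}(K,T_n(f))$.

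Granting the divisibility $ord_\varpi(\bar c)=k-\alpha$, where $\bar c$ denotes the reduction of $c(\pl)$ modulo $\varpi^k$, the element $\varpi^{k-\alpha}c(\pl)$ lies in $\ker(\mathrm{red})=\iota(Sel_{(\mathfrak{D}_g\pl/\n)}(K,T_k(f)))$, so one can define $y$ in this Selmer group by $\iota(y)=\varpi^{k-\alpha}c(\pl)$. The identity $\iota(\varpi^\alpha y)=\varpi^\alpha\cdot\varpi^{k-\alpha}c(\pl)=\varpi^k c(\pl)=\iota(\bar c)$ and the injectivity of $\iota$ yield $\bar c=\varpi^\alpha y$, while $\iota(\varpi^{k-1}y)=\varpi^{2k-1-\alpha}c(\pl)\neq 0$ (by the order estimate above) shows that $\varpi^{k-1}y\neq 0$, hence $y$ has order exactly $k$. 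Therefore $\Op y\subset Sel_{(\mathfrak{D}_g\pl/\n)}(K,T_k(f))$ is the desired free $\Op/\varpi^k$-submodule of rank one, and it contains $\bar c=\varpi^\alpha y$.

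The main obstacle is verifying the upper bound $ord_\varpi(\bar c)\le k-\alpha$; the lower bound $ord_\varpi(\bar c)\ge k-\alpha$ follows at once from the mod $\varpi^k$ reduction of the reciprocity law. I would try to establish this upper bound by exploiting the origin of $c(\pl)$ as the Kummer image of the Heegner divisor $Q_K\in(Pic(X_\pl)(K)\otimes\Op)/I_\pl$ under the level-raising isomorphism $(T_p(J_\pl)\otimes_{\Z_p}\Op)/I_\pl\simeq T_n(f)$ of Theorem \ref{raiselev}: the compatibility of the Kummer map with the reduction $T_n(f)\to T_k(f)$, together with the fact that the exact order of $loc_\pl$ forces $c(\pl)$ to be $\varpi^\alpha$-divisible in a controlled way, should pin down the order of $\bar c$. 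A structural alternative is to invoke Corollary \ref{changepar}(2), identifying the free $\Op/\varpi^n$-direct summand of the Selmer group with the part detected by $loc_\pl$, and splitting $c(\pl)$ into a $\varpi^\alpha$-divisible component lying in this summand plus a piece in the complementary $N'\oplus N'$, whose contribution to $\bar c$ can be absorbed.
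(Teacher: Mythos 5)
Your overall strategy---relating $Sel_{(\mathfrak{D}_g\pl/\n)}(K,T_k(f))$ to $Sel_{(\mathfrak{D}_g\pl/\n)}(K,T_n(f))$ through the maps induced by $T_k(f)\xrightarrow{\varpi^k}T_n(f)\rightarrow T_k(f)$---is close in spirit to how the paper concludes, which also exploits the commutative triangle built from multiplication by $\varpi^k$. The gap is precisely in the step you defer: the claim $\varpi^{k-\alpha}\bar c=0$, equivalently that the additive order of $c(\pl)$ in $Sel_{(\mathfrak{D}_g\pl/\n)}(K,T_n(f))$ equals the additive order $2k-\alpha$ of its localisation at $\pl$. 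This is not merely unproven; it is false in general. Writing $\bar c=\varpi^{t}u$ with $t=ord_\varpi(\bar c)$ the divisibility order (the paper's convention), one has $t\le\alpha$ with equality if and only if $loc_\pl$ does not increase divisibility, and the additive order of $\bar c$ is $k-t\ge k-\alpha$. Lemma \ref{lemred} shows that whenever $Sel_{(\mathfrak{D}_g/\n)}(K,A_k(f))\neq 0$ there are infinitely many admissible $\pl$ with $t=t(g_k,\pl)<t(g_k)=\alpha$---and these are exactly the primes used later in the induction---so for such $\pl$ your key identity fails: $\varpi^{k-\alpha}c(\pl)$ does not lie in the kernel of reduction and the class $y$ you want cannot be defined. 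Neither suggested remedy repairs this: compatibility of the Kummer map with reduction is a local/geometric statement that cannot control the global divisibility of $c(\pl)$, and in Corollary \ref{changepar}(2) the summand $\Op/\varpi^n$ is produced abstractly by the structure theorem and is in no way aligned with $loc_\pl$, so nothing forces the $N'\oplus N'$-component of $c(\pl)$ to be ``absorbed''---controlling that component is the whole difficulty.

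The missing input in your proposal is the paper's second use of the reciprocity laws at an auxiliary admissible prime $\pl'$: if the complement $M\oplus M$ of the free summand of $Sel_{(\mathfrak{D}_g\pl/\n)}(K,T_k(f))$ were not killed by $\varpi^{k-1}$, the Selmer group would contain a free $\Op/\varpi^k$-module of rank two, hence $Sel_{(\mathfrak{D}_g\pl\pl'/\n)}(K,T_k(f))$ would contain a nonzero free module for every $\pl'$; by the argument of Proposition \ref{weakannihil} this forces $a(h)=0$ for the level raising $h$ at $\pl\pl'$, whence $loc_{\pl'}c(\pl)=0$ for all $\pl'$ and then $c(\pl)=0$ by Lemma \ref{locnonzero}, a contradiction. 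Only after establishing $\varpi^{k-1}M=0$ (hence $\varpi^{k-1}N=0$) does one see that the image of the reduction map $Sel_{(\mathfrak{D}_g\pl/\n)}(K,T_n(f))\rightarrow Sel_{(\mathfrak{D}_g\pl/\n)}(K,T_k(f))$, which contains $\bar c$, is free of rank one. Your argument contains no substitute for this global step.
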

\begin{proof}
By the previous corollary we can write
\begin{align*}
Sel_{(\mathfrak{D}_g\pl/\n)}(K, T_n(f))= & \Op/\varpi^n \oplus N\oplus N \\
Sel_{(\mathfrak{D}_g\pl/\n)}(K, T_k(f))= & \Op/\varpi^k \oplus M\oplus M. 
\end{align*}
We know that $c(\pl)$ is non zero (modulo $\varpi^k$), since this is true for its localisation at $\pl$. We claim that this implies that $\varpi^{k-1}M=0$. If this is not the case, then $Sel_{(\mathfrak{D}_g\pl/\n)}(K, T_k(g))$ contains a free $\Op/\varpi^k$-submodule of rank 2, hence, for any admissible prime $\pl' \neq \pl$, the kernel $ Sel_{(\mathfrak{D}_g\pl\pl'/\n)}(K, T_k(g))$ of the localisation map
\begin{equation*}
loc_{\pl'}:Sel_{(\mathfrak{D}_g\pl/\n)}^{\pl'}(K, T_k(g))\rightarrow H^1_{ur}(K_{\pl'}, T_k(g))
\end{equation*}
contains a (non zero) free $\Op/\varpi^k$-submodule. Therefore, writing $Sel_{(\mathfrak{D}_g\pl\pl'/\n)}(K, T_k(g))=P\oplus P$, we have $\varpi^{k-1}P\neq 0$. With the same argument as in the proof of Proposition \ref{weakannihil} we deduce that $a(h)=0$, where $h$ is a level raising modulo $\varpi^k$ of $g$ at $\pl\pl'$. The second reciprocity law yields $loc_{\pl'}c(\pl)=a(h)=0$; since this is true for every admissible prime $\pl'$ we get $c(\pl)=0$, which gives a contradiction.

Hence $\varpi^{k-1}M=0$. We have a commutative diagram
\begin{center}
\begin{tikzcd}
Sel_{(\mathfrak{D}_g\pl/\n)}(K, T_n(f)) \arrow[r, "\cdot \varpi^{k}"] \arrow[d] & Sel_{(\mathfrak{D}_g\pl/\n)}(K, T_n(f))[\varpi^k]\\
Sel_{(\mathfrak{D}_g\pl/\n)}(K, T_k(f)) \arrow[ru]
\end{tikzcd}
\end{center}
where the diagonal arrow is an isomorphism. Since $\varpi^{k-1}M=0$ we deduce that the $\Op/\varpi^k$-module $\varpi^{k-1}Sel_{(\mathfrak{D}_g\pl/\n)}(K, T_k(f))$ is cyclic, hence the same holds for $\varpi^{k-1}Sel_{(\mathfrak{D}_g\pl/\n)}(K, T_n(f))[\varpi^k]$. Therefore $\varpi^{k-1}N=0$, so $N$ is killed by the horizontal map. This implies that the image of the vertical arrow is free of rank one; since it contains the reduction modulo $\varpi^k$ of $c(\pl)$, the proof is complete.
\end{proof}

\begin{rem}\label{freeness}
The above property, which will play an important role in the proof of the sought-for estimate for the length of the Selmer group, explains why we need to work with the reduction modulo $\varpi^k$ of automorphic forms modulo $\varpi^{2k}$. Let us say (after Howard \cite[Definition 2.3.6]{how06}, whose proof of a very similar result we closely followed) that our Euler system is free if it enjoys the property in the above proposition. Then the Euler system modulo $\varpi^{2k}$ may not be free, but its reduction modulo $\varpi^k$ is.
\end{rem}

\subsection{} Let us set $t(g_k)=ord_\varpi(a(g_k))$ and $t(g_k, \pl)=ord_\varpi(c(\pl))$ for $\pl$ an $n$-admissible prime (not dividing $\mathfrak{D}_g$). We are seeing $c(\pl)$ as an element of $Sel_{(\mathfrak{D}_g\pl/\n)}(K, T_k(f))$, and we remark that $ord_\varpi(c(\pl))$ can be calculated in any submodule $C\simeq \Op/\varpi^k$ containing $c(\pl)$, whose existence is guaranteed by the previous proposition. Indeed, let $c(\pl)=\varpi^a u$, where $u \in C$ is a unit. Then clearly $a$ is smaller than the order of $c(\pl)$ in $Sel_{(\mathfrak{D}_g\pl/\n)}(K, T_k(f))$. We claim that equality holds. Indeed, suppose that there exists $v \in Sel_{(\mathfrak{D}_g\pl/\n)}(K, T_k(f))$ and $b>a$ such that $\varpi^{b}v=c(\pl)$. Then we have $\varpi^{b}v=\varpi^a u$, hence
\begin{equation*}
\varpi^{k-1} u=\varpi^{a+k-a-1} u=\varpi^{b+k-a-1}v
\end{equation*}
The left hand side is non zero, but the right hand side is zero, since $b+k-a-1\geq 1+k-1=k$; this yields a contradiction and proves our claim.

\subsection{} We have the following chain of inequalities:
\begin{equation*}
t(g_k, \pl)=ord_\varpi(c(\pl))\leq ord_\varpi(loc_\pl c(\pl))=t(g_k)<k
\end{equation*}

where the last equality follows from the first reciprocity law, and the last inequality holds because of our assumption that $a(g)\not\equiv 0 \pmod {\varpi^k}$. Hence there exists a class $\kappa(\pl) \in Sel_{(\mathfrak{D}_g\pl/\n)}(K, T_k(f))$ such that $c(\pl)=\varpi^{t(g_k, \pl)}\kappa(\pl)$. Our previous discussion implies that the class $\kappa(\pl)$ can (and will) be taken to be in a submodule $C\simeq \Op/\varpi^k$. It enjoys the following properties:
\begin{enumerate}
\item $\kappa(\pl) \in Sel_{(\mathfrak{D}_g\pl/\n)}(K, T_k(f))$.
\item $ord_{\varpi} \kappa(\pl)=0$.
\item $ord_\varpi(loc_\pl (\kappa(\pl)))=t(g_k)-t(g_k, \pl)$.
\end{enumerate}

\begin{lem}\label{lemred}
Suppose that $Sel_{(\mathfrak{D}_g/\n)}(K, A_k(f))\neq 0$. Then there exist infinitely many admissible primes $\pl \nmid \mathfrak{D}_g$ such that $t(g_k, \pl)<t(g_k)$.
\end{lem}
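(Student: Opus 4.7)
The plan is to combine the hypothesis $Sel_{(\mathfrak{D}_g/\n)}(K, A_k(f)) \neq 0$ with Poitou-Tate global duality to force, for infinitely many admissible primes $\pl \nmid \mathfrak{D}_g$, the image of $loc_\pl: Sel_{(\mathfrak{D}_g\pl/\n)}(K, T_k(f)) \to H^1_{tr}(K_\pl, T_k(f)) \simeq \Op/\varpi^k$ to be a proper subgroup, hence contained in $\varpi \cdot H^1_{tr}(K_\pl, T_k(f))$. Since $\kappa(\pl)$ lies in this Selmer group, this would yield $ord_\varpi(loc_\pl(\kappa(\pl))) \geq 1$, and combining with the relation $t(g_k) = t(g_k, \pl) + ord_\varpi(loc_\pl(\kappa(\pl)))$ established just before the lemma, this gives $t(g_k, \pl) < t(g_k)$.

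To produce such primes, first I would extract from any non-zero $s \in Sel_{(\mathfrak{D}_g/\n)}(K, A_k(f))$ a non-zero $\varpi$-torsion class $s' \in Sel_{(\mathfrak{D}_g/\n)}(K, A_k(f))[\varpi]$, by multiplying by an appropriate power of $\varpi$. The short exact sequence $0 \to A_1(f) \to A_k(f) \to A_{k-1}(f) \to 0$, together with $H^0(K, A_{k-1}(f)) = 0$ -- a consequence of the large image hypothesis (2), which first yields $A_1(f)^{\Gamma_K} = 0$ and then $A_n(f)^{\Gamma_K} = 0$ for all $n$ by induction on the natural $A_1(f)$-filtration -- identifies $H^1(K, A_1(f))$ with its image in $H^1(K, A_k(f))$, namely $H^1(K, A_k(f))[\varpi]$. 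Hence $s'$ lifts to a non-zero class $\tilde s' \in H^1(K, A_1(f))$, and Lemma \ref{locnonzero} supplies infinitely many $n$-admissible primes $\pl \nmid \mathfrak{D}_g$ with $loc_\pl(\tilde s') \neq 0$ in $H^1(K_\pl, A_1(f))$.

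For such a $\pl$ I would check that $loc_\pl(s')$ is still non-zero in $H^1(K_\pl, A_k(f))$: the local analogue of the previous argument uses Lemma \ref{structadm}, since the decomposition $A_k(f)|_{\Gamma_{K_\pl}} \simeq \Op/\varpi^k \oplus \Op/\varpi^k(1)$ makes the map $H^0(K_\pl, A_k(f)) \to H^0(K_\pl, A_{k-1}(f))$ equal to the surjection $\Op/\varpi^k \twoheadrightarrow \Op/\varpi^{k-1}$, whence $H^1(K_\pl, A_1(f)) \hookrightarrow H^1(K_\pl, A_k(f))$ is injective. Since $\pl \nmid \mathfrak{D}_g \n p$ and $s' \in Sel_{(\mathfrak{D}_g/\n)}$, we obtain $0 \neq loc_\pl(s') \in H^1_{ur}(K_\pl, A_k(f))$. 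Invoking Poitou-Tate global duality in the form used in the proof of Proposition \ref{weakannihil} (cf.\ \cite[Theorem 2.3.4]{mr04}), the annihilator under the perfect local Tate pairing of the non-zero image $loc_\pl(Sel_{(\mathfrak{D}_g/\n)}(K, A_k(f))) \subset H^1_{ur}(K_\pl, A_k(f))$ coincides with $loc_\pl(Sel_{(\mathfrak{D}_g\pl/\n)}(K, T_k(f)))$; as a non-zero subgroup of $\Op/\varpi^k$ has annihilator strictly contained in $\varpi \Op/\varpi^k$, the desired inequality $t(g_k, \pl) < t(g_k)$ follows. The delicate point is precisely ensuring that the $\varpi$-torsion Selmer class survives the passages between $H^1(-, A_1(f))$ and $H^1(-, A_k(f))$ both globally and locally; both injectivity statements rely crucially on the large image hypothesis (globally) and on the admissibility of $\pl$ together with Lemma \ref{structadm} (locally).
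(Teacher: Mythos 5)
Your proof is correct and follows essentially the same approach as the paper: use a nonzero Selmer class, Lemma \ref{locnonzero} to produce infinitely many admissible primes $\pl$ where that class has nonzero localisation, and then Poitou--Tate global duality together with property $(3)$ of $\kappa(\pl)$ to conclude $t(g_k,\pl)<t(g_k)$. The paper compresses the duality step into a single pairing identity, $0=\langle loc_\pl(c), loc_\pl(\kappa(\pl))\rangle$, whereas you argue through annihilators of images; these are two phrasings of the same argument, and your more explicit treatment of the passage between $A_1(f)$ and $A_k(f)$ (both globally and locally at $\pl$) is a useful unpacking of what the paper leaves implicit. One small imprecision: the annihilator of $loc_\pl\bigl(Sel_{(\mathfrak{D}_g/\n)}(K, A_k(f))\bigr)$ is the image of the transverse component map $\delta_\pl$ on the \emph{relaxed} group $Sel^{\pl}_{(\mathfrak{D}_g/\n)}(K, T_k(f))$, which in general only \emph{contains} $loc_\pl\bigl(Sel_{(\mathfrak{D}_g\pl/\n)}(K, T_k(f))\bigr)$ rather than coinciding with it; but since $\kappa(\pl)$ lives in the latter and hence lands inside the annihilator, and the annihilator of a nonzero subgroup of $\Op/\varpi^k$ lies in $\varpi\Op/\varpi^k$, your conclusion is unaffected.
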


\begin{proof}
Let $c \in Sel_{(\mathfrak{D}_g/\n)}(K, A_k(f))$ be a non zero class, and $\pl$ an admissible prime not dividing $\mathfrak{D}_g$ such that $loc_{\pl}(c) \neq 0$. By global duality and the fact that the local conditions defining $Sel_{(\mathfrak{D}_g\pl/\n)}(K, T_k(f))$ and $Sel_{(\mathfrak{D}_g/\n)}(K, A_k(f))$ are everywhere orthogonal except at $\pl$ we have:
\begin{equation*}
0=\sum_v\langle loc_v (c), loc_v (\kappa(\pl)) \rangle=\langle loc_\pl(c), loc_\pl (\kappa(\pl)) \rangle.
\end{equation*}
Since the pairing between $H^1_{ur}(K_\pl, A_k(f))$ and $H^1_{tr}(K_\pl, T_k(f))$ is perfect and $loc_{\pl}(c) \neq 0$ we deduce that $loc_\pl (\kappa(\pl)) \in \Op/\varpi^k$ cannot be a unit. By property $(3)$ above, this proves the lemma.
\end{proof}

\begin{corol}\label{ineq0}
If $a(g_k)$ is a unit then $Sel_{(\mathfrak{D}_g/\n)}(K, A_k(f))=0$.
\end{corol}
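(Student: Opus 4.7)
The plan is to deduce this corollary immediately from Lemma \ref{lemred} by contraposition. First I would assume for contradiction that $Sel_{(\mathfrak{D}_g/\n)}(K, A_k(f))\neq 0$. Then Lemma \ref{lemred} supplies an admissible prime $\pl \nmid \mathfrak{D}_g$ for which the inequality $t(g_k, \pl) < t(g_k)$ holds.

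Next, I would unpack the two quantities in that inequality under the hypothesis that $a(g_k)$ is a unit. By definition $t(g_k) = ord_\varpi(a(g_k))$, which is zero precisely when $a(g_k) \in (\Op/\varpi^k)^\times$. On the other hand, $t(g_k, \pl) = ord_\varpi(c(\pl))$ is a nonnegative integer by construction (it is computed inside a submodule isomorphic to $\Op/\varpi^k$ containing $c(\pl)$, as explained after Proposition \ref{freesys}). Thus $t(g_k, \pl) < 0$ is impossible, producing the desired contradiction.

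Since all the real work has already been packaged into Lemma \ref{lemred} (which uses Corollary \ref{lociso} to find a prime with nonzero localisation, and then combines global duality with the first reciprocity law), the only ``obstacle'' here is to match up the valuations correctly; there is no further technical difficulty. In particular, no new admissible prime choice or level raising step needs to be introduced, and the first reciprocity law enters only indirectly via Lemma \ref{lemred}.
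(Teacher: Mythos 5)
Your argument is correct and is exactly the paper's primary proof: Corollary \ref{ineq0} is deduced by contraposition from Lemma \ref{lemred}, since $a(g_k)$ a unit gives $t(g_k)=0$ while $t(g_k,\pl)\geq 0$ always. (The paper only adds the remark that one could alternatively argue directly from Proposition \ref{weakannihil} applied to $A_1(f)$, but notes this is essentially the same argument.)
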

\begin{proof}
This follows immediately from the previous lemma. We remark that this can also be deduced from Proposition \ref{weakannihil}, replacing $A_k(f)$ with $A_1(f)$ and using the hypothesis that $a(g_k)$ is not congruent to $0$ modulo $\varpi$. These two proofs essentially rely on the same argument.
\end{proof}

\subsection{} Recall that we want to prove Theorem \ref{mainthm}. We will prove it by induction on $t(g_k)=ord_\varpi(a(g_k))$, which is finite by assumption. The above corollary deals with the base case $t(g_k)=0$; to treat the general case we will make use of the following

\begin{lem}\label{keylem}
Suppose that $Sel_{(\mathfrak{D}_g/\n)}(K, A_k(f))$ is non zero. Then there exist two $n$-admissible primes $\pl_1\neq \pl_2$ not dividing $\mathfrak{D}_g$ and an admissible automorphic form $h \in S^{B^{'^\times}}(\Op/\varpi^n)$, where $B'$ is the totally definite quaternion algebra of discriminant $\mathfrak{D}_g\pl_1\pl_2$, such that:
\begin{enumerate}
\item $t(g_k, \pl_1)=t(g_k, \pl_2)<t(g_k)$.
\item $t(h_k)=t(g_k, \pl_i)$, $i=1,2$.
\item $ord_\varpi loc_{\pl_1}(\kappa(\pl_2))=ord_\varpi loc_{\pl_2}(\kappa(\pl_1))=0$.
\item $Sel_{(\mathfrak{D}_g \pl_1 \pl_2/\n)}(K, A_k(f))=Sel_{(\mathfrak{D}_g/\n)\pl_1\pl_2}(K, A_k(f))$.
\end{enumerate}
\end{lem}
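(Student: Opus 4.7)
The plan is to produce $\pl_1,\pl_2$ one at a time by Chebotarev density, exploiting the structural decomposition from Corollary~\ref{changepar}, the Euler system freeness from Proposition~\ref{freesys}, and the interplay between the two reciprocity laws and global duality.

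By Corollary~\ref{changepar}(1) I write $Sel_{(\mathfrak{D}_g/\n)}(K, A_k(f))=N\oplus N$ with $N\neq 0$ and fix generators $e_1,e_2$ of the two summands; by Proposition~\ref{weakannihil} each has order at most $\varpi^{k-1}$. First I apply Lemma~\ref{locnonzero} in the form of Corollary~\ref{lociso}, adapted to the submodule $\Op e_1$, to produce an $n$-admissible prime $\pl_1\nmid \mathfrak{D}_g p$ such that $loc_{\pl_1}$ is injective on $\Op e_1$; the argument in the proof of Lemma~\ref{lemred} then forces $t(g_k,\pl_1)<t(g_k)$, and Proposition~\ref{freesys} yields the normalised class $\kappa(\pl_1)$ with $c(\pl_1)=\varpi^{t(g_k,\pl_1)}\kappa(\pl_1)$. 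Next I choose $\pl_2\nmid \mathfrak{D}_g\pl_1 p$ by simultaneous Chebotarev density so that: (a) $loc_{\pl_2}\kappa(\pl_1)$ is a unit in $H^1_{ur}(K_{\pl_2},T_k(f))\simeq \Op/\varpi^k$; (b) $loc_{\pl_2}$ is injective on $\Op e_2$; and (c) the combined localisation $loc_{\pl_1}\oplus loc_{\pl_2}\colon Sel_{(\mathfrak{D}_g/\n)}(K, A_k(f))\to H^1_{ur}(K_{\pl_1},A_k)\oplus H^1_{ur}(K_{\pl_2},A_k)$ is surjective modulo $\varpi$. Each is a non-vanishing condition on the conjugacy class of $Fr_{\pl_2}$ in a finite Galois extension, and simultaneous realisation is guaranteed by the large image hypothesis (2) of Theorem~\ref{thbkdef}.

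Conditions (1) and (3) follow from (a) combined with Remark~\ref{truerec}: the identity $loc_{\pl_1}c(\pl_2)=loc_{\pl_2}c(\pl_1)$ up to unit, together with $loc_{\pl_2}c(\pl_1)=\varpi^{t(g_k,\pl_1)}\cdot\text{unit}$, shows $t(g_k,\pl_2)\le ord_\varpi loc_{\pl_1}c(\pl_2)=t(g_k,\pl_1)<t(g_k)$, and also yields $t(g_k,\pl_2)+ord_\varpi loc_{\pl_1}\kappa(\pl_2)=t(g_k,\pl_1)$. The two claimed equalities $t(g_k,\pl_1)=t(g_k,\pl_2)$ and $ord_\varpi loc_{\pl_1}\kappa(\pl_2)=0$ are therefore equivalent; I obtain them by running the analogue of Lemma~\ref{lemred}'s duality argument with the roles of $\pl_1,\pl_2$ exchanged, using Proposition~\ref{freesys} applied to $\kappa(\pl_2)$ to ensure the relevant free submodule and deriving a contradiction from any strict inequality. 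For (2), the second reciprocity law (Theorem~\ref{serec}) provides an admissible form $h$ on the quaternion algebra of discriminant $\mathfrak{D}_g\pl_1\pl_2$ with $a(h)=loc_{\pl_2}c(\pl_1)$ up to unit, so $t(h_k)=t(g_k,\pl_1)=t(g_k,\pl_2)$.

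Finally (4) is proved via Poitou-Tate duality. The quotient $Sel_{(\mathfrak{D}_g\pl_1\pl_2/\n)}(K, A_k(f))/Sel_{(\mathfrak{D}_g/\n)\pl_1\pl_2}(K, A_k(f))$ embeds into $H^1_{tr}(K_{\pl_1},A_k)\oplus H^1_{tr}(K_{\pl_2},A_k)$, and its image is the annihilator of the image of $Sel^{\pl_1\pl_2}_{(\mathfrak{D}_g/\n)}(K, T_k(f))$ in $H^1_{ur}(K_{\pl_1},T_k)\oplus H^1_{ur}(K_{\pl_2},T_k)\simeq (\Op/\varpi^k)^2$. It suffices to show the latter image is the whole of $(\Op/\varpi^k)^2$: it receives the images of $c(\pl_1), c(\pl_2)$ together with the image of $Sel_{(\mathfrak{D}_g/\n)}(K, T_k)\simeq Sel_{(\mathfrak{D}_g/\n)}(K, A_k)$, and condition (c) ensures surjectivity modulo $\varpi$, which Nakayama's lemma lifts to the full surjection. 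The main obstacle is the equality $t(g_k,\pl_1)=t(g_k,\pl_2)$, since the class $\kappa(\pl_2)$ on which the required vanishing of $ord_\varpi loc_{\pl_1}\kappa(\pl_2)$ depends is itself a function of the prime $\pl_2$ being chosen; this vanishing cannot be imposed by a direct Chebotarev condition and must instead be extracted from the interplay of the two reciprocity laws with the freeness property of the Euler system modulo $\varpi^k$.
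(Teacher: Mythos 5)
The crucial missing idea is the \emph{minimality} of $t(g_k,\pl_1)$. The paper's argument chooses $\pl_1$ so that $t(g_k,\pl_1)=\min\{t(g_k,\pl):\pl\text{ is }n\text{-admissible}\}$; this is what closes the chain of equalities. After choosing $\pl_2$ with $ord_\varpi\,loc_{\pl_2}\kappa(\pl_1)=0$, the second reciprocity law (in the symmetric form of Remark~\ref{truerec}) gives
\[
t(g_k,\pl_1)=t(g_k,\pl_1)+ord_\varpi\,loc_{\pl_2}\kappa(\pl_1)=t(h_k)=t(g_k,\pl_2)+ord_\varpi\,loc_{\pl_1}\kappa(\pl_2),
\]
hence only the one-sided inequality $t(g_k,\pl_1)\geq t(g_k,\pl_2)$. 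It is the minimality of $t(g_k,\pl_1)$ that supplies the reverse inequality, forcing equality and simultaneously $ord_\varpi\,loc_{\pl_1}\kappa(\pl_2)=0$. Your proposal does not invoke minimality: you pick $\pl_1$ only so that $loc_{\pl_1}$ is injective on $\Op e_1$, which guarantees $t(g_k,\pl_1)<t(g_k)$ but nothing more. You then assert a ``contradiction from any strict inequality'' by running Lemma~\ref{lemred} with the roles exchanged, but this is not a valid deduction: if $t(g_k,\pl_2)<t(g_k,\pl_1)$ there is no contradiction, only a different (indeed better) admissible prime. You candidly flag in your last sentence that this is the main obstacle, but the proposal does not actually resolve it; it is precisely the extremal choice of $\pl_1$ that does.

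There is also a subsidiary error in your argument for item $(4)$: condition (c), surjectivity of $loc_{\pl_1}\oplus loc_{\pl_2}$ on $Sel_{(\mathfrak{D}_g/\n)}(K,A_k(f))$ modulo $\varpi$, cannot hold when $k\geq 2$, which is the relevant regime. By Proposition~\ref{weakannihil} that Selmer group is killed by $\varpi^{k-1}$, so its image in $H^1_{ur}(K_{\pl_1},A_k(f))\oplus H^1_{ur}(K_{\pl_2},A_k(f))\simeq(\Op/\varpi^k)^2$ lands inside $\varpi\cdot(\Op/\varpi^k)^2$ and therefore vanishes modulo $\varpi$. Nakayama cannot rescue this. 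The paper's proof of $(4)$ instead produces surjectivity of $v_{\pl_1}\oplus v_{\pl_2}$ on the relaxed group $Sel^{\pl_1\pl_2}_{(\mathfrak{D}_g/\n)}(K,T_k(f))$ directly from the classes $\kappa(\pl_1),\kappa(\pl_2)$: using item $(3)$ and the fact that $loc_{\pl_i}\kappa(\pl_i)$ lies in the transverse part, one gets $(v_{\pl_1}\oplus v_{\pl_2})(\kappa(\pl_1))=(0,\mathrm{unit})$ and $(v_{\pl_1}\oplus v_{\pl_2})(\kappa(\pl_2))=(\mathrm{unit},0)$, so $(4)$ is really a consequence of $(3)$, which in turn hinges on the minimality you omit.
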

\begin{proof}
Take $\pl_1$ admissible such that $t(g_k,\pl_1)=\mathrm{min}\{t(g_k,\pl), \pl \; n-\text{admissible prime}\}$. Lemma \ref{lemred} and the assumption that the Selmer group is non trivial imply that $t(g_k,\pl_1) < t(g_k)$. We know that $ord_\varpi(\kappa(\pl_1))=0$ and that $\kappa(\pl_1)\in C \subset Sel_{(\mathfrak{D}_g\pl_1/\n)}(K, T_k(f))$, where $C\simeq \Op/\varpi^k$. Hence by Corollary \ref{lociso} (which holds true modulo $\varpi^k$) we can choose an $n$-admissible prime $\pl_2$ distinct from $\pl_1$ and not dividing $\mathfrak{D}_g$ such that
\begin{equation*}
ord_\varpi(loc_{\pl_2}(\kappa(\pl_1)))=0.
\end{equation*}
We now have the following chain of equalities:
\begin{align*}
t(g_k,\pl_1)+ord_\varpi(loc_{\pl_2}(\kappa(\pl_1)))= & ord_\varpi( loc_{\pl_2}(c(\pl_1)))\\
&= t(h_k)=ord_\varpi(loc_{\pl_1}(c(\pl_2)))\\
&= t(g_k,\pl_2)+ord_\varpi(loc_{\pl_1}(\kappa(\pl_2)))
\end{align*}
where the second and third equalities follow from the second reciprocity law in the form given in Remark \ref{truerec}.\\
Now
\begin{equation*}
t(g_k,\pl_1)\leq t(g_k, \pl_2)
\end{equation*}
by minimality of $t(g_k,\pl_1)$, and $ord_\varpi(loc_{\pl_2}(\kappa(\pl_1)))=0$. Comparing the first, third and last member in the chain of equalities above we deduce that
\begin{align*}
t(g_k,\pl_1) = t(g_k, \pl_2)&= t(h_k),\\
ord_\varpi(loc_{\pl_1}(\kappa(\pl_2)))&= 0.
\end{align*}
Hence claims $(1), (2)$ and $(3)$ are proved.\\
It remains to show $(4)$. We have two exact sequences:
\begin{align*}
Sel_{(\mathfrak{D}_g\pl_1\pl_2/\n)}(K, T_k(f)) \hookrightarrow Sel_{(\mathfrak{D}_g/\n)}^{\pl_1 \pl_2}(K, T_k(f))\xrightarrow{v_{\pl_1}\oplus v_{\pl_2}} H^1_{ur}(K_{\pl_1}, T_k(f))\oplus H^1_{ur}(K_{\pl_2}, T_k(f))\\
Sel_{(\mathfrak{D}_g/\n)\pl_1 \pl_2}(K, A_k(f)) \hookrightarrow Sel_{(\mathfrak{D}_g\pl_1\pl_2/\n)}(K, A_k(f))\xrightarrow{\delta_{\pl_1}\oplus \delta_{\pl_2}} H^1_{tr}(K_{\pl_1}, A_k(f))\oplus H^1_{tr}(K_{\pl_2}, A_k(f))
\end{align*}
where $v_{\pl_i}$ (resp. $\delta_{\pl_i}$) denotes the composition of the localisation map and the projection onto the unramified (resp. transverse) part.

By Poitou-Tate global duality the images of $v_{\pl_1}\oplus v_{\pl_2}$ and $\delta_{\pl_1}\oplus \delta_{\pl_2}$ are orthogonal complements with respect to the local Tate pairing. Now, the classes $\kappa(\pl_1)$ and $\kappa(\pl_2)$ belong to $Sel_{(\mathfrak{D}_g/\n)}^{\pl_1 \pl_2}(K, T_k(f))$, and because of $(3)$ and the fact that the localisation at $\pl_i$ of $\kappa(\pl_i)$ falls in the transverse part we have, up to unit:
\begin{align*}
v_{\pl_1}\oplus v_{\pl_2}(\kappa(\pl_1))=&(0,1)\\
v_{\pl_1}\oplus v_{\pl_2}(\kappa(\pl_2))=&(1,0).
\end{align*}
This implies that the map
\begin{equation*}
v_{\pl_1}\oplus v_{\pl_2}:Sel_{(\mathfrak{D}_g/\n)}^{\pl_1 \pl_2}(K, T_k(f))\rightarrow H^1_{ur}(K_{\pl_1}, T_k(f))\oplus H^1_{ur}(K_{\pl_2}, T_k(f))
\end{equation*}
is surjective. Since the pairing between $H^1_{ur}(K_{\pl_i}, T_k(f))$ and $H^1_{tr}(K_{\pl_i}, A_k(f))$ is perfect for $i=1,2$ we deduce that 
\begin{equation*}
\delta_{\pl_1}\oplus \delta_{\pl_2}: Sel_{(\mathfrak{D}_g\pl_1\pl_2/\n)}(K, A_k(f))\longrightarrow H^1_{tr}(K_{\pl_1}, A_k(f))\oplus H^1_{tr}(K_{\pl_2}, A_k(f))
\end{equation*}
is the zero map, therefore we have an isomorphism:
\begin{equation*}
Sel_{(\mathfrak{D}_g/\n)\pl_1\pl_2}(K, A_k(f)) \simeq Sel_{(\mathfrak{D}_g\pl_1\pl_2/\n)}(K, A_k(f)).
\end{equation*}
\end{proof}

\subsection{} Let us now prove the inequality
\begin{equation*}
l_{\Op}Sel_{(\mathfrak{D}_g/\n)}(K, A_k(f))\leq 2 t(g_k)
\end{equation*}
by induction on $t(g_k)$. If $t(g_k)=0$ then the inequality follows from Corollary \ref{ineq0}, hence let us suppose that $t(g_k)>0$. If $Sel_{(\mathfrak{D}_g/\n)}(K, A_k(f))$ is trivial then there is nothing to prove. If $Sel_{(\mathfrak{D}_g/\n)}(K, A_k(f))$ is non trivial, choose two $n$-admissible primes $\pl_1$, $\pl_2$ as in Lemma \ref{keylem}.

We have two exact sequences:
\begin{align*}
Sel_{(\mathfrak{D}_g/\n)}(K, T_k(f)) \hookrightarrow Sel_{(\mathfrak{D}_g/\n)}^{\pl_1 \pl_2}(K, T_k(f))\xrightarrow{\delta_{\pl_1}\oplus \delta_{\pl_2}} H^1_{tr}(K_{\pl_1}, T_k(f))\oplus H^1_{tr}(K_{\pl_2}, T_k(f))\\
Sel_{(\mathfrak{D}_g/\n)\pl_1 \pl_2}(K, A_k(f)) \hookrightarrow Sel_{(\mathfrak{D}_g/\n)}(K, A_k(f))\xrightarrow{v_{\pl_1}\oplus v_{\pl_2}} H^1_{ur}(K_{\pl_1}, A_k(f))\oplus H^1_{ur}(K_{\pl_2}, A_k(f)).
\end{align*}

Let us identify $H^1_{tr}(K_{\pl_i}, T_k(f))$ with $H^1_{ur}(K_{\pl_i}, A_k(f))^\vee=\mathrm{Hom}_{\Op/\varpi^k}(H^1_{ur}(K_{\pl_i}, A_k(f)), \Op/\varpi^k)$ via the local Tate pairing at $\pl_i$, for $i=1, 2$. Taking the dual of the lower exact sequence above and using Poitou-Tate global duality we find an exact sequence:
\begin{align*}
Sel_{(\mathfrak{D}_g/\n)}(K, T_k(f)) & \hookrightarrow Sel_{(\mathfrak{D}_g/\n)}^{\pl_1 \pl_2}(K, T_k(f))\xrightarrow{\delta_{\pl_1}\oplus \delta_{\pl_2}} H^1_{tr}(K_{\pl_1}, T_k(f))\oplus H^1_{tr}(K_{\pl_2}, T_k(f))\\ &\xrightarrow{v_{\pl_1}^\vee\oplus v_{\pl_2}^\vee} Sel_{(\mathfrak{D}_g/\n)}(K, A_k(f))^\vee \longrightarrow Sel_{(\mathfrak{D}_g/\n)\pl_1 \pl_2}(K, A_k(f))^\vee \longrightarrow 0.
\end{align*}

Using $(4)$ of Lemma \ref{keylem} we deduce that:
\begin{align}\label{lgtsel}
\nonumber l_{\Op}Sel_{(\mathfrak{D}_g/\n)}(K, A_k(f))-l_{\Op}Sel_{(\mathfrak{D}_g\pl_1\pl_2/\n)}(K, A_k(f))=&\\
l_{\Op}Sel_{(\mathfrak{D}_g/\n)}(K, T_k(f))-l_{\Op}Sel_{(\mathfrak{D}_g/\n)}^{\pl_1 \pl_2}(K, T_k(f))+2k.
\end{align}

Let us now compute $l_{\Op}Sel^{\pl_1 \pl_2}_{(\mathfrak{D}_g/\n)}(K, T_k(f))-l_{\Op}Sel_{(\mathfrak{D}_g/\n)}(K, T_k(f))$. Choose an element $\zeta(\pl_1) \in Sel_{(\mathfrak{D}_g/\n)}^{\pl_1}(K, T_k(f))$ such that $\delta_{\pl_1}(\zeta(\pl_1))$ generates the image of the map
\begin{equation*}
Sel_{(\mathfrak{D}_g/\pl)}^{\pl_1}(K, T_k(f))\xrightarrow{\delta_{\pl_1}}H^1_{tr}(K_{\pl_1}, T_k(f))\simeq \Op/\varpi^k.
\end{equation*}

We find an exact sequence:
\begin{equation}\label{exseqselr}
0\longrightarrow Sel_{(\mathfrak{D}_g/\n)}(K, T_k(f))\longrightarrow Sel_{(\mathfrak{D}_g/\n)}^{\pl_1}(K, T_k(f))\xrightarrow{\delta_{\pl_1}} \delta_{\pl_1}(\zeta(\pl_1))\Op/\varpi^k\longrightarrow 0.
\end{equation}

The cohomology class $\kappa(\pl_1)$ belongs to $Sel_{(\mathfrak{D}_g/\n)}^{\pl_1}(K, T_k(f))$; hence, possibly after multiplying it by a unit of $\Op/\varpi^k$, there exists an integer $m_1 \geq 0$ such that
\begin{equation*}
\delta_{\pl_1}(\varpi^{m_1}\zeta(\pl_1)-\kappa(\pl_1))=0.
\end{equation*}
This implies:
\begin{equation*}
m_1+ord_\varpi(\delta_{\pl_1}(\zeta(\pl_1)))=ord_\varpi(\delta_{\pl_1}(\kappa(\pl_1)))=t(g_k)-t(g_k, \pl_1)=t(g_k)-t(h_k)
\end{equation*}
where the third equality follows from (2) of Lemma \ref{keylem}.
Using this and the exact sequence \eqref{exseqselr} we obtain:
\begin{align*}
l_{\Op}Sel_{(\mathfrak{D}_g/\n)}^{\pl_1}(K, T_k(f))-l_{\Op}Sel_{(\mathfrak{D}_g/\n)}(K, T_k(f))&= k-ord_\varpi(\delta_{\pl_1}(\zeta(\pl_1)))\\
&= k+ m_1-t(g_k)+t(h_k).
\end{align*}

Similarly, take $\zeta(\pl_2) \in Sel_{(\mathfrak{D}_g/\n)}^{\pl_1\pl_2}(K, T_k(f))$ such that we have an exact sequence:
\begin{equation*}
0\longrightarrow Sel^{\pl_1}_{(\mathfrak{D}_g/\n)}(K, T_k(f))\longrightarrow Sel^{\pl_1\pl_2}_{(\mathfrak{D}_g/\n)}(K, T_k(f))\xrightarrow{\delta_{\pl_2}} \delta_{\pl_2}(\zeta(\pl_2))\Op/\varpi^k\longrightarrow 0.
\end{equation*}
Then there exists $m_2\geq 0$ such that $\delta_{\pl_2}(\varpi^{m_2}\zeta(\pl_2)-\kappa(\pl_2))=0$, hence we find:
\begin{align*}
l_{\Op}Sel_{(\mathfrak{D}_g/\n)}^{\pl_1\pl_2}(K, T_k(f))-l_{\Op}Sel_{(\mathfrak{D}_g/\n)}^{\pl_1}(K, T_k(f))&= k-ord_\varpi(\delta_{\pl_2}(\zeta(\pl_2)))\\
&= k+ m_2 - t(g_k)+t(h_k).
\end{align*}
Therefore we obtain:
\begin{equation*}
l_{\Op}Sel_{(\mathfrak{D}_g/\n)}^{\pl_1\pl_2}(K, T_k(f))-l_{\Op}Sel_{(\mathfrak{D}_g/\n)}(K, T_k(f))= 2k+ m_1+ m_2-2t(g_k)+2t(h_k).
\end{equation*}
This, together with equation \eqref{lgtsel}, yields:
\begin{equation*}
l_{\Op}Sel_{(\mathfrak{D}_g/\n)}(K, A_k(f))-l_{\Op}Sel_{(\mathfrak{D}_g\pl_1\pl_2/\n)}(K, A_k(f))=-m_1-m_2+2t(g_k)-2t(h_k)
\end{equation*}
which finally implies:
\begin{equation}\label{keyeq}
l_{\Op}Sel_{(\mathfrak{D}_g/\n)}(K, A_k(f))-2t(g_k)=l_{\Op}Sel_{(\mathfrak{D}_g\pl_1\pl_2/\n)}(K, A_k(f))-2t(h_k)-m_1-m_2.
\end{equation}
Since $t(h_k)< t(g_k)$, by induction we have $l_{\Op}Sel_{(\mathfrak{D}_g\pl_1\pl_2/\n)}(K, A_k(f))-2t(h_k)\leq 0$, hence by \eqref{keyeq} we also have $l_{\Op}Sel_{(\mathfrak{D}_g/\n)}(K, A_k(f))-2t(g_k)\leq 0$.

\subsection{}
We have completed the proof of the inequality in the statement of Theorem \ref{mainthm}. It remains to prove that the equality also holds, under the additional hypothesis that the implication
\begin{equation*}
Sel_{(\mathfrak{D}_h/\n)}(K, A_1(f))=0\Longrightarrow t(h)=0
\end{equation*}
holds true for every admissible automorphic form $h$ modulo $\varpi$.\\
As before, the proof is by induction on $t(g_k)$, and the case $t(g_k)=0$ is covered by Lemma \ref{ineq0}. So let us suppose that $t(g_k)>0$. Then, \emph{since we are assuming that}
\begin{equation}\label{assimp}
Sel_{(\mathfrak{D}_g/\n)}(K, A_1(f))=0\Longrightarrow t(g_1)=0
\end{equation}
we deduce that $Sel_{(\mathfrak{D}_g/\n)}(K, A_1(f))$ \emph{cannot be trivial}, hence we can invoke Lemma \ref{keylem}. Let us stress, before continuing the proof, that it is at this point that the proof of the equality differs substantially from the proof of the inequality we gave above, and the non trivial input \eqref{assimp} is crucially needed.

Let $\pl_1$ and $\pl_2$ be two admissible primes as in Lemma \ref{keylem}, and let $h$ be the automorphic form given by the lemma. We proved above the following equality \eqref{keyeq}:
\begin{equation*}
l_{\Op}Sel_{(\mathfrak{D}_g/\n)}(K, A_k(f))-2t(g_k)=l_{\Op}Sel_{(\mathfrak{D}_g\pl_1\pl_2/\n)}(K, A_k(f))-2t(h_k)-m_1-m_2
\end{equation*}
Moreover we know that $t(h_k) < t(g_k)$. Therefore by induction we have 
\begin{equation*}
l_{\Op}Sel_{(\mathfrak{D}_g\pl_1\pl_2/\n)}(K, A_k(f))=2t(h_k).
\end{equation*}
In order to complete the proof it is therefore enough to show that $m_1=m_2=0$.

\subsection{} Let us first show that $m_1=0$. Recall that $m_1$ was chosen in such a way that the equality $\delta_{\pl_1}(\varpi^{m_1}\zeta(\pl_1)-\kappa(\pl_1))=0$ is satisfied. In other words, the class $\varpi^{m_1}\zeta(\pl_1)-\kappa(\pl_1)$, which a priori lives in $Sel_{(\mathfrak{D}_g/\n)}^{\pl_1}(K, T_k(f))$, actually belongs to $Sel_{(\mathfrak{D}_g/\n)}(K, T_k(f))$. Lemma \ref{weakannihil} yields the equality:
\begin{equation*}
\varpi^{k-1}\kappa(\pl_1)=\varpi^{m_1+k-1}\zeta(\pl_1)
\end{equation*}
hence:
\begin{equation*}
\varpi^{k-1}loc_{\pl_2}(\kappa(\pl_1))=\varpi^{m_1+k-1}loc_{\pl_2}(\zeta(\pl_1)) \in H^1_{ur}(K, T_k(f))\simeq \Op/\varpi^k.
\end{equation*}
By Lemma \ref{keylem} we have $ord_\varpi(loc_{\pl_2}(\kappa(\pl_1)))=0$, hence the left hand side of the above equality is non zero. Therefore the right hand side must also be non trivial, yielding  $m_1+k-1 < k$. Hence $m_1=0$.

\subsection{} Let us finally show that $m_2=0$. Since we already know that $m_1=0$ we have $\delta_{\pl_1}(\zeta(\pl_1))=\delta_{\pl_1}(\kappa(\pl_1))$ (up to unit). By definition of $\zeta(\pl_1)$, this implies that $\delta_{\pl_1}(\kappa(\pl_1))$ generates the image of the map $Sel_{(\mathfrak{D}_g/\n)}^{\pl_1}(K, T_k(f))\xrightarrow{\delta_{\pl_1}}H^1_{tr}(K_{\pl_1}, T_k(f))$.\\
Now recall that $m_2$ was chosen so that $\delta_{\pl_2}(\varpi^{m_2}\zeta(\pl_2)-\kappa(\pl_2))=0$, which implies that
\begin{equation*}
\varpi^{m_2}\zeta(\pl_2)-\kappa(\pl_2) \in Sel_{(\mathfrak{D}_g/\n)}^{\pl_1}(K, T_k(f))\subset Sel_{(\mathfrak{D}_g/\n)}^{\pl_1\pl_2}(K, T_k(f)).
\end{equation*}
Therefore there exists $m_3 \geq 0$ such that:
\begin{equation*}
\delta_{\pl_1}(\varpi^{m_2}\zeta(\pl_2)-\kappa(\pl_2)-\varpi^{m_3}\kappa(\pl_1))=0.
\end{equation*}
In other words, we have $\varpi^{m_2}\zeta(\pl_2)-\kappa(\pl_2)-\varpi^{m_3}\kappa(\pl_1) \in Sel_{(\mathfrak{D}_g/\n)}(K, T_k(f))$. Invoking Lemma \ref{weakannihil} again we obtain
\begin{equation*}
\varpi^{m_2+k-1}\zeta(\pl_2)-\varpi^{k-1}\kappa(\pl_2)=\varpi^{m_3+k-1}\kappa(\pl_1)
\end{equation*}
hence:
\begin{equation*}
loc_{\pl_1}(\varpi^{m_2+k-1}\zeta(\pl_2))-loc_{\pl_1}(\varpi^{k-1}\kappa(\pl_2))=loc_{\pl_1}(\varpi^{m_3+k-1}\kappa(\pl_1)).
\end{equation*}
Suppose by contradiction that $m_2>0$. Then the first term in the above equation dies, and we get:
\begin{equation*}
-loc_{\pl_1}(\varpi^{k-1}\kappa(\pl_2))=loc_{\pl_1}(\varpi^{m_3+k-1}\kappa(\pl_1)).
\end{equation*}
Notice that
\begin{align*}
loc_{\pl_1}(\varpi^{k-1}\kappa(\pl_2)) \in H^1_{ur}(K_{\pl_1}, T_k(f))\\
loc_{\pl_1}(\varpi^{m_3+k-1}\kappa(\pl_1))\in H^1_{tr}(K_{\pl_1}, T_k(f)).
\end{align*}
Hence both terms must be zero. On the other hand, since $ord_\varpi(loc_{\pl_1}(\kappa(\pl_2)))=0$ the left hand side of the above equality is non trivial. This gives a contradiction, and completes the proof of Theorem \ref{mainthm}.

\section{The indefinite case}

\subsection{}\label{defindefclass} Let us now switch to the \emph{indefinite setting}, namely we fix a Hilbert newform $f \in S(\n)$ and a CM extension $K/F$ such that $\n$ is squarefree and all its factors are inert in $K$, and we assume in addition that $[F: \Q]\not \equiv \#\{\q, \q \mid \n\} \pmod 2$. In this case the sign of the functional equation of $L(f_K, s)$ is $-1$, and Zhang's special value formula asserts that $L'(f_K, 1)=\frac{2^{r+1}}{\sqrt{N(disc(K/F))}}\cdot \langle f, f \rangle_{Pet}\cdot \langle a(f), a(f)\rangle_{NT}$. Recall (see section \ref{specvalind}) that $a(f) \in (Jac(X)(K)\otimes\Op)/I_{f_B}$ is the $f_B$-isotypical part of the trace of a point $P_K$ with $CM$ by $\mathcal{O}_K$ on the quotient Shimura curve $X$ with full level structure attached to the quaternion algebra $B$ of discriminant $\n$ ramified at all but one infinite place. Furthermore in this setting the $f_B$-isotypical part of the Tate module $T_p(Jac(X))$ is isomorphic to $T(f)$ as a $\Gamma_F$-module, as a consequence of the Eichler-Shimura relations for Shimura curves. It follows that $a(f)$ gives rise to a cohomology class $c \in Sel(K, T(f))$. Our aim in this section is to prove that, if $L'(f_K, 1)\neq 0$, then $Sel(K, A(f))$ has $\Op$-corank one and we have the inequality (which is an equality under the same additional assumption as in Theorem \ref{mainthm})
\begin{equation}\label{mainind}
l_{\Op}Sel(K, A(f))/div \leq 2ord_\varpi(c),
\end{equation}
where we denote by $Sel(K, A(f))/div$ the quotient of $Sel(K, A(f))$ by its maximal divisible submodule.

\begin{rem}
In the simplest case when the Hecke eigenvalues of $f$ are rational we have $V(f)=T_p(E_f)\otimes_{\Z_p}\Q_p$, where $T_p(E_f)$ is the $p$-adic Tate module of an elliptic curve $E_f/F$ with $L$-function $L(f, s)$. The above inequality then translates into a relation between (the $p$-parts of) the cardinality of the Tate-Shafarevich group of $E/K$ and the square of the index of the Heegner point in $E_f(K)$ coming from $a(f)$; this is consistent with what predicted by the Birch and Swinnerton-Dyer conjecture (see \cite[Lemma 10.1.2]{zha14}).
\end{rem}

We are going to prove the following result:

\begin{teo}\label{mainmodnindef}
Fix $f \in S(\n)$ and $K/F$ satisfying assumptions $(1), (2), (3)$ of Theorem \ref{thbkdef}. Assume that $L'(f_K, 1)\neq 0$. Let $n=2k$, and suppose that $c \not \equiv 0 \pmod {\varpi^k}$. Then the following inequality holds:
\begin{equation*}
l_{\Op}Sel(K, A_k(f))\leq k +2ord_\varpi(c).
\end{equation*}
Moreover the above inequality is an equality provided that the following implication holds true: if $h$ is an admissible automorphic form mod $\varpi$ and $Sel_{(\mathfrak{D}_h/\n)}(K, A_1(f))=0$ then $0\neq a(h) \in \Op/\varpi$.
\end{teo}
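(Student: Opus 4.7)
The plan is to reduce the indefinite case to the definite one via level raising at a well-chosen admissible prime $\pl$. Raising level at an admissible prime switches the parity of $\#\{\q \mid \n\pl\}$ relative to $[F:\Q]$, producing a totally definite quaternion algebra $B'/F$ of discriminant $\n\pl$, to which Theorem \ref{mainthm} will be applied.

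First, we show that one can find $\pl$ detecting the full $\varpi$-divisibility of $c$. Since $c \not\equiv 0 \pmod{\varpi^k}$, setting $r = ord_\varpi(c) < k$ we can write $c = \varpi^r \tilde c$ for some $\tilde c \in Sel(K, T(f))$ with $\tilde c \notin \varpi Sel(K, T(f))$. Since $T_k(f)$ has no $\Gamma_K$-invariants (a consequence of the irreducibility of $T_1(f)$), the natural map $Sel(K, T(f))/\varpi^k Sel(K, T(f)) \hookrightarrow Sel(K, T_k(f))$ is injective; hence the image $\kappa \in Sel(K, T_k(f)) \simeq Sel(K, A_k(f))$ of $\tilde c$ generates a free cyclic $\Op/\varpi^k$-submodule, with $c_k = \varpi^r \kappa$. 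Applying Corollary \ref{lociso} (whose proof rests on Lemma \ref{locnonzero} and adapts verbatim to the present Selmer structure), we pick an $n$-admissible prime $\pl$ with $n=2k$ such that $loc_\pl$ sends this submodule isomorphically onto $H^1_{ur}(K_\pl, A_k(f))$; in particular $ord_\varpi(loc_\pl(c_k)) = r$.

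The next and most substantial step is a reciprocity law analogous to Theorem \ref{serec} but linking the indefinite form $f$ and the Heegner-type class $c$ to a level raising on the definite side. Concretely, the reduction of the Shimura curve $X$ at an admissible prime $\pl$ has supersingular locus identified with a quaternionic set for the totally definite $B'$ of discriminant $\n\pl$; combining this with the level-raising isomorphism $T_p(\mathrm{Jac}(X))/I_{f_B} \simeq T(f)$ and invoking Ihara's lemma from \cite{mash19} yields, up to a $\varpi$-adic unit,
\begin{equation*}
loc_\pl(c) = a(g)
\end{equation*}
for an admissible eigenform $g \in S^{B'^\times}(\Op/\varpi^n)$ with $\mathfrak{D}_g = \n\pl$ and Hecke eigenvalues away from $\pl$ equal to those of $f$ modulo $\varpi^n$. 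In particular $ord_\varpi(a(g_k)) = r < k$, so Theorem \ref{mainthm} applies to $g$ and gives
\begin{equation*}
l_{\Op} Sel_{(\pl)}(K, A_k(f)) = l_{\Op} Sel_{(\mathfrak{D}_g/\n)}(K, A_k(f)) \leq 2 ord_\varpi(a(g_k)) = 2 ord_\varpi(c).
\end{equation*}

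Finally, $Sel(K, A_k(f))$ and $Sel_{(\pl)}(K, A_k(f))$ differ only in the local condition at $\pl$ (unramified versus transverse); denoting by $Sel_\pl(K, A_k(f))$ the common strict sub-Selmer group with the zero condition imposed at $\pl$, one immediately obtains $l_{\Op} Sel(K, A_k(f)) - l_{\Op} Sel_{(\pl)}(K, A_k(f)) \leq k$ since both $H^1_{ur}(K_\pl, A_k(f))$ and $H^1_{tr}(K_\pl, A_k(f))$ have length $k$. Combining with the previous step gives the desired $l_{\Op} Sel(K, A_k(f)) \leq 2 ord_\varpi(c) + k$. For the equality statement, the additional hypothesis upgrades Theorem \ref{mainthm} to an equality; the comparison step is then also an equality because our choice of $\pl$ makes $loc_\pl: Sel(K, A_k(f)) \to H^1_{ur}(K_\pl, A_k(f))$ surjective, and Poitou-Tate duality then forces $loc_\pl: Sel_{(\pl)}(K, A_k(f)) \to H^1_{tr}(K_\pl, A_k(f))$ to vanish. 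The main obstacle, as expected, is the reciprocity law above: its proof requires an analysis of the bad reduction of $X$ at $\pl$ parallel to that of Theorem \ref{serec}, but in the indefinite-to-definite direction.
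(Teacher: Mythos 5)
Your proposal follows the same overall strategy as the paper: pick an $n$-admissible prime $\pl$ detecting $ord_\varpi(c)$, transfer to the definite side via the reciprocity law at $\pl$, apply Theorem \ref{mainthm} to $Sel_{(\pl)}(K,A_k(f))$, and compare $Sel(K,A_k(f))$ with $Sel_{(\pl)}(K,A_k(f))$ via Poitou--Tate duality at $\pl$. There are two places where you deviate from the paper, both worth noting.

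For the fact that the reduction of $c$ lies in a free rank-one $\Op/\varpi^k$-submodule of $Sel(K,T_k(f))$, the paper first establishes $Sel(K,T_n(f))=\Op/\varpi^n\oplus N\oplus N$ (choosing some admissible $\pl$ with $loc_\pl(c)\neq 0$, using the second reciprocity law to get $a(g)\neq 0$, then invoking Proposition \ref{weakannihil} and Corollary \ref{changepar}) and then runs the argument of Proposition \ref{freesys}. You instead argue directly via the injectivity of $Sel(K,T(f))/\varpi^k\hookrightarrow Sel(K,T_k(f))$ and primitivity of $\tilde c$. This is a genuine shortcut and it is correct: $H^0(K,T_k(f))=0$ gives torsion-freeness of $H^1(K,T(f))$ and injectivity at the $H^1$-level, and because the Bloch--Kato local conditions on $T(f)$ are defined by propagation from $V(f)$ (hence are saturated: $\varpi H^1\cap Sel=\varpi Sel$), the injectivity persists for Selmer groups. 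It would be worth making the saturation step explicit since that is what the argument hinges on.

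The other deviation is your observation that the reciprocity law $loc_\pl(c)=a(g)$ (up to unit) is not literally an instance of Theorem \ref{serec}, which is formulated for a class $c(\pl)$ on a Shimura curve of discriminant $\mathfrak{D}_g\pl$ localised at a second admissible prime $\pl'$. You are right to flag this: here the Shimura curve has discriminant $\n$ with no auxiliary admissible prime in it, so one needs to rerun the Cherednik--Drinfeld/supersingular-locus/Ihara argument in this slightly different configuration. The paper simply cites ``the second reciprocity law \ref{serec}'' without comment; the underlying geometry and multiplicity-one input are indeed identical, but your caution is warranted. Your final comparison step (bounding $l_{\Op}Sel-l_{\Op}Sel_{(\pl)}$ by $k$ for the inequality, and upgrading to an equality via surjectivity of $loc_\pl$ on $Sel$ plus global duality for the equality case) matches the paper's exact sequence \eqref{globdu1} argument and is correct.
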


\subsection{} Theorem \ref{mainmodnindef} implies the inequality \eqref{mainind}. Indeed, assume that the theorem holds and write $Sel(K, A(f))=(E_\p/\Op)^r\oplus M$ with $M$ finite. Then $l_{\Op}Sel(K, A_k(f))=kr+\lOp M[\varpi^k]$, hence $r=1$ and for $k$ large enough we have
\begin{equation*}
l_{\Op}(M)=l_{\Op}(M[\varpi^k])=l_{\Op} Sel(K, A_k(f))-k \leq 2ord_\varpi(c),
\end{equation*}
hence $l_{\Op}Sel(K, A(f))/div \leq 2ord_\varpi(c)$, as we had to show.

In order to prove Theorem \ref{mainmodnindef} we will make use of the second reciprocity law \ref{serec}, relating the localisation of the class $c$ at an admissible prime $\pl$ to the quantity $a(g)$, where $g \in S^{B^{'\times}}(\Op/\varpi^n)$ is an admissible automorphic form on the totally definite quaternion algebra $B'$ of discriminant $\n \pl$. This brings us in the context of Theorem \ref{mainthm}, and choosing $\pl$ appropriately we will deduce Theorem \ref{mainmodnindef} from Theorem \ref{mainthm}. We wish to stress the remarkable fact that the second reciprocity law allows to prove special value formulas in analytic rank one by reducing them to the rank zero case.

\subsection{} Let us prove Theorem \ref{mainmodnindef}.
Let $t(f)=ord_\varpi(c)$. The reduction modulo $\varpi^k$ of $c$ is contained in a free $\Op/\varpi^k$-module $C$ of rank one. This is proved in the same way as in Proposition \ref{freesys}, once one we know that
\begin{equation*}
Sel(K, T_n(f))=\Op/\varpi^n\oplus N \oplus N.
\end{equation*}
To show this, choose $\pl$ admissible such that $loc_{\pl}(c)\neq 0 \in \Op/\varpi^n$. By the second reciprocity law this implies that $a(g)\neq 0$, where $g$ is a level raising of $f$ at $\pl$ modulo $\varpi^n$. Hence Proposition \ref{weakannihil} applies, and it implies that $Sel_{(\pl)}(K, T_n(f))\simeq M\oplus M$. By Corollary \ref{changepar} we conclude.

There exists a class $\kappa \in C\subset Sel(K, T_k(f))$ such that $\varpi^{t(f)}\kappa=c$.
Hence we can choose an admissible prime $\pl$ such that $ord_\varpi(loc_\pl(\kappa))=0$. Using the second reciprocity law we find:
\begin{equation}\label{defindeq}
t(f)=ord_\varpi(loc_\pl(c))=ord_\varpi(a(g_k))
\end{equation}
where $g \in S^{B^{'\times}}(\Op/\varpi^n)$ is an admissible automorphic form on the totally definite quaternion algebra $B'$ of discriminant $\n \pl$.

\subsection{} To prove Theorem \ref{mainmodnindef} we shall now compare the Selmer groups $Sel(K, A_k(f))$ and $Sel_{(\pl)}(K, A_k(f))$.

We have a square of Selmer groups:
\begin{center}
\begin{tikzcd}
& Sel^{\pl}(K,A_k(f))&\\
Sel(K, A_k(f))\arrow{ur}{c} & & Sel_{(\pl)}(K, A_k(f))\arrow{ul}{d} \\
& Sel_{\pl}(K, A_k(f))\arrow{ul}{a}\arrow{ur}{b}&
\end{tikzcd}
\end{center}

Global duality yields an exact sequence:
\begin{align}\label{globdu1}
0\longrightarrow Sel_{(\pl)}(K, T_k(f))\longrightarrow & Sel^\pl(K, T_k(f))\xrightarrow{v_\pl}H^1_{ur}(K_\pl, T_k(f))\\
\nonumber \xrightarrow{\delta_\pl^\vee}&Sel_{(\pl)}(K, A_k(f))^\vee \longrightarrow Sel_\pl(K, A_k(f))^\vee\longrightarrow 0.
\end{align}

Since $\kappa\in Sel^\pl(K, T_k(f))$ satisfies $ord_\varpi(loc_\pl(\kappa))=0$ the map $v_\pl$ is surjective, therefore $\delta_\pl^\vee$ is the zero map, which yields:
\begin{equation*}
Sel_{(\pl)}(K, A_k(f))\simeq Sel_\pl(K, A_k(f)).
\end{equation*}
In other words, the inclusion $b$ in the square above is an isomorphism. This implies that

\begin{align}\label{bdlength}
l_{\Op} Sel^{\pl}(K, A_k(f))-l_{\Op} Sel_{\pl}(K, A_k(f))=\\
\nonumber l_{\Op} Sel^{\pl}(K, A_k(f))-l_{\Op} Sel_{(\pl)}(K, A_k(f))\leq k.
\end{align}

Since the class $\kappa\in Sel(K, T_k(f))\simeq Sel(K, A_k(f))$ satisfies $ord_\varpi(loc_\pl(\kappa))=0$, we find an exact sequence
\begin{equation*}
0 \longrightarrow Sel_\pl(K, A_k(f)) \longrightarrow Sel(K, A_k(f))\xrightarrow{v_\pl}H^1_{ur}(K_\pl, A_k(f))\longrightarrow 0
\end{equation*}
which yields
\begin{equation*}
l_{\Op} Sel(K, A_k(f))-l_{\Op} Sel_{\pl}(K, A_k(f))=k.
\end{equation*}

Because of \eqref{bdlength} we see that the map $c$ is an isomorphism. Collecting everything we get
\begin{equation*}
l_{\Op}Sel_{(\pl)}(K, A_k(f))=l_{\Op}Sel_\pl(K, A_k(f))=l_{\Op}Sel(K, A_k(f))-k.
\end{equation*}

Now $g$ is an admissible automorphic form satisfying the hypotheses of Theorem \ref{mainthm}, hence

\begin{equation}\label{hkineq}
l_{\Op}Sel_{(\pl)}(K, A_k(f))\leq 2ord_\varpi(a(g_k)).
\end{equation}

Finally, using equation \eqref{defindeq} we obtain

\begin{equation*}
l_{\Op}Sel(K, A_k(f))\leq 2t(f)+k
\end{equation*}

and equality holds whenever it does in equation \eqref{hkineq}. Hence the proof is complete.

\subsection{A remark on parity of the dimension of Selmer groups.} Let us conclude by mentioning the implications that the level raising-length lowering method we used has for parity results for Selmer groups. These results are already known in our setting \cite{nek06}; we hope that the following argument  - inspired to \cite[Section 9.2]{zha14}; see also \cite[Lemma 9]{gp12} - can be of interest nonetheless. It allows to prove that the \emph{parity conjecture}, asserting that the parity of the $\Op$-corank of $Sel(K, A(f))$ equals the parity of the order of vanishing of $L(f_K, s)$ at $s=1$, follows from (hence is equivalent to) the \emph{sign conjecture}, which predicts that $Sel(K, A(f))\neq 0$ whenever $\epsilon(f_K)=-1$. Work related to the latter conjecture has been carried out in \cite{su06}, \cite{beche09}, \cite{bel12}.\\
Let $f \in S(\n)$ and let $K/F$ be a $CM$ extension such that the assumptions $(1), (2), (3)$ of Theorem \ref{thbkdef} are satisfied. We wish to prove that
\begin{equation*}
\mathrm{dim}_{\Op/\varpi} Sel(K, A_1(f))\equiv \epsilon(f_K) \pmod 2
\end{equation*}
by induction on $d(f)=\mathrm{dim}_{\Op/\varpi} Sel(K, A_1(f))$, and assuming that the congruence holds true whenever $d(f)=0$. Suppose that $Sel(K, A_1(f))\neq 0$ and choose a non zero class $c \in Sel(K, A_1(f))$ as well as an admissible prime $\pl$ such that $loc_\pl(c) \neq 0$. Then $loc_{\pl}:Sel(K, A_1(f))\rightarrow H^1_{ur}(K_\pl, A_1(f))$ is surjective, hence by global duality we obtain that $Sel(K, A_1(f))=Sel^\pl(K, A_1(f))$ and $Sel_{(\pl)}(K, A_1(f))=Sel_{\pl}(K, A_1(f))$. Hence $\mathrm{dim}_{\Op/\varpi} Sel_{(\pl)}(K, A_1(f))=\mathrm{dim}_{\Op/\varpi} Sel(K, A_1(f))-1$. On the other hand $Sel_{(\pl)}(K, A_1(f))$ is the mod $\varpi$ Selmer group of a level raising $g \in S(\n\pl)$ of $f$ (see \ref{changesel}; notice that in this argument one needs to work with modular forms in characteristic zero). By induction we have $\epsilon(g_K)\equiv \mathrm{dim}_{\Op/\varpi}Sel_{(\pl)}(K, A_1(f)) \pmod 2$. Finally, $\epsilon(g_K)=-\epsilon(f_K)$ as the numbers of prime ideals dividing $\n$ and $\n\pl$ have different parity.

\bibliographystyle{amsalpha}
\bibliography{mybib}

\Addr

\end{document}